    \let\usingAmsArtXII\usepackage  
  \def \useHugeSize {}
  \def \numberingIsThrough {}
    \def\atSign{@@}
    \def\mathbb{\Bbb}
    \def\mathfrak{\frak}
    \def\mathbf{\bold}
      \def\boldsymbol#1{{\bold #1}}
    \def\mathbit{\boldsymbol}
    \newenvironment{proof}{%
         \@ifnextchar[{%
                       \expandafter\let\expandafter\end@proof
                         \csname endpf*\endcsname
                         \my@proof
                      }{\let\end@proof\endpf\pf}%
        }{\end@proof}
    \def\my@proof[#1]{\@nameuse{pf*}{#1}}
    \def\xrightarrow[#1]#2{@>{#2}>{#1}>}
    \def\xleftarrow[#1]#2{@<{#2}<{#1}<}
    \def\providecommand#1{\def#1}
    \def\emph#1{{\em #1}}
    \def\textbf#1{{\bf #1}}
    \def\mathring{\overset{\,\,{}_\circ}}
    \let\usingAmsArtXII\usepackage
        \DeclareMathAccent{\mathring}{\mathalpha}{operators}{"17}
      \long\def\FAKEendPROOF{\endtrivlist}
      \def\endproof{\qed\endtrivlist}
        \DeclareMathAlphabet{\mathbit}{OML}{cmm}{b}{it}
      \def\atSign{@}
      \def\Sb#1\endSb{_{\substack{#1}}}
      \def\Sp#1\endSp{^{\substack{#1}}}
                \def\mathcal{\cal}
                \def\pcyr{%
                        \def\default@family{UWCyr}%
                        \let\oldSl@\sl
                        \def\sl{\def\default@shape{it}\oldSl@}%
                        \cyracc
                        \language\Russian\family{UWCyr}\selectfont
                }
                \DeclareFontFamily{OT2}{cmr}{\hyphenchar\font45 }
                \DeclareFontShape{OT2}{cmr}{m}{n}{%
                     <5><6><7><8><9><10>gen*wncyr %
                     <10.95><12><14.4><17.28><20.74><24.88> wncyr10 %
                }{}
                \DeclareFontShape{OT2}{cmr}{m}{it}{%
                     <5><6><7><8><9><10> gen * wncyi%
                     <10.95><12><14.4><17.28><20.74><24.88> wncyi10%
                }{}
                \DeclareFontShape{OT2}{cmr}{bx}{n}{%
                     <5><6><7><8><9><10> gen * wncyb%
                     <10.95><12><14.4><17.28><20.74><24.88> wncyb10%
                }{}
                \DeclareFontShape{OT2}{cmr}{m}{sl}{%
                     <-> ssub * cmr/m/it%
                }{}
                \DeclareFontShape{OT2}{cmr}{m}{sc}{%
                     <5><6><7><8><9><10>%
                     <10.95><12><14.4><17.28><20.74><24.88> wncysc10%
                }{}
                \DeclareFontFamily{OT2}{cmss}{\hyphenchar\font45 }
                \DeclareFontShape{OT2}{cmss}{m}{n}{%
                     <8><9><10> gen * wncyss%
                     <10.95><12><14.4><17.28><20.74><24.88> wncyss10%
                }{}
                \def\cyrencodingdefault{OT2}
                \def\pcyr{%
                        \cyracc
                        \let\encodingdefault\cyrencodingdefault
                        \language\Russian\fontencoding{OT2}\selectfont
                }
        \def\theorembodyfont#1{\relax}
          \let\@@th@plain\th@plain
          \def\th@plain{ \@@th@plain \slshape }
        \let\normalshape\relax
     \def\cprime{$'$}
  \def\@sect@my#1#2#3#4#5#6[#7]#8{%
\ifnum #2>\c@secnumdepth
   \let\@svsec\@empty
 \else
   \refstepcounter{#1}%
\edef\@svsec{\ifnum#2<\@m
             \@ifundefined{#1name}{}{\csname #1name\endcsname\ }\fi
\noexpand\rom{\csname the#1\endcsname.}\enspace}\fi
 \@tempskipa #5\relax
 \ifdim \@tempskipa>\z@ 
   \begingroup #6\relax
   \@hangfrom{\hskip #3\relax\@svsec}{\interlinepenalty\@M #8\par}%
   \endgroup
   \if@article\else\csname #1mark\endcsname{%
        \ifnum \c@secnumdepth >#2\relax\csname the#1\endcsname. \fi#7}\fi
\ifnum#2>\@m \else
       \let\@tempf\\ \def\\{\protect\\}\addcontentsline{toc}{#1}%
{\ifnum #2>\c@secnumdepth \else
             \protect\numberline{%
               \ifnum#2<\@m
               \@ifundefined{#1name}{}{\csname #1name\endcsname\ }\fi
               \csname the#1\endcsname.}\fi
           #8}\let\\\@tempf
     \fi
 \else
  \def\@svsechd{#6\hskip #3\@svsec
    \@ifnotempty{#8}{\ignorespaces#8\unskip
       \ifnum\spacefactor<1001.\fi}%
        \ifnum#2>\@m \else
          \let\@tempf\\ \def\\{\protect\\}\addcontentsline{toc}{#1}%
            {\ifnum #2>\c@secnumdepth \else
              \protect\numberline{%
                \ifnum#2<\@m
                \@ifundefined{#1name}{}{\csname #1name\endcsname\ }\fi
                \csname the#1\endcsname.}\fi
             #8}\let\\\@tempf\fi}%
 \fi
\@xsect{#5}}
  \let\@sect\@sect@my             
  \def\th@remark@my{\theorempreskipamount6\p@\@plus6\p@
    \theorempostskipamount\theorempreskipamount
    \def\theorem@headerfont{\it}\normalshape}
    \let\th@remark\th@remark@my
    \let\o@@remark\th@remark
      \def\th@remark{\o@@remark
    \ifdim\theorempostskipamount < 2pt\relax
      \theorempostskipamount\theorempreskipamount
         \multiply\theorempostskipamount\tw@
         \divide\theorempostskipamount\thr@@
    \fi
      }
\let\myLabel\@gobble
\def\labelsONmargin{\@mparswitchfalse\def\myLabel##1{\@bsphack\marginpar
                                  {\normalshape\tiny\rm Label ##1}\@esphack}}
  \def\url#1{{\tt #1}}%
\def\cyracc{\def\u##1{
                \if \i##1\char"1A%
                \else \if I##1\char"12%
                \else \accent"24 ##1\fi\fi }%
\def\"##1{\if e##1{\char"1B}%
                \else \if E##1{\char"13}%
                \else \accent"7F ##1\fi\fi }%
\def\9##1{\if##1z\char"19
\else\if##1Z\char"11
\else\if##1E\char"03
\else\if##1e\char"0B
\else\if##1u\char"18
\else\if##1U\char"10
\else\if##1A\char"17
\else\if##1a\char"1F
\else\if##1p\char"7E
\else\if##1P\char"5E
\else\if##1Q\char"5F
\else\if##1q\char"7F
\else\if##1i\char"1A
\else\if##1I\char"12
\else\if##1N\char"7D
\fi
\fi
\fi
\fi
\fi
\fi
\fi
\fi
\fi
\fi
\fi
\fi
\fi
\fi
\fi
}%
\def\cydot{{\kern0pt}}}%
\def\cydot{$\cdot$}
        \def\Russian{0\relax
    \message{Don't know the hyphenation rules for Russian^^J
                        Please do INITeX with `input  russhyph' in the
                        command line}%
                \gdef\Russian{0\relax}%
        }
  \def\@putname#1#2#3#4{\def\@@ref{#3}\let\old@bf\bf
        \def\bf##1{\old@bf\if?\noexpand##1?{#4}\else##1\fi}%
    #1{#2}%
        \let\bf\old@bf}
  \def\@putname#1#2#3#4{\def\@@ref{#3}\let\old@bf\bf    
    \let\old@reset@font\reset@font          
        \def\bf##1{\old@bf\if?\noexpand##1?{#4}\else##1\fi}%
    \def\reset@font##1##2{\old@reset@font##1\if?\noexpand##2?{#4}\else##2\fi}#1{#2}%
        \let\bf\old@bf\let\reset@font\old@reset@font}
\let\my@ref=\ref
\def\ref#1{\@putname\my@ref{#1}{#1}{\tiny\rm\@@ref}}
\let\my@pageref=\pageref
\def\pageref#1{\@putname\my@pageref{#1}{#1}{\tiny\rm\@@ref}}
\let\my@cite=\cite
\def\cite#1{\@putname\my@cite{#1}{\@citeb}{\tiny\rm\@@ref}}
\theoremstyle{plain} 
\numberwithin{equation}{section}
\theoremstyle{definition}
\newtheorem{definition}{Definition}[section]
\newtheorem{definition}{Definition}
\theoremstyle{remark}
\newtheorem{remark}[definition]{Remark} 
\newtheorem{note}{Note}[section] 
\newtheorem{summary}{Summary}[section] 
\theoremstyle{plain} 
\newtheorem{theorem}[definition]{Theorem}
\newtheorem{lemma}[definition]{Lemma}
\newtheorem{corollary}[definition]{Corollary}
\begin{document}
\bibliographystyle{amsplain}

\ifx\useHugeSize\undefined
\else
\Huge
\fi

\relax
\renewcommand{\v}{\varepsilon} \newcommand{\p}{\rho}
\newcommand{\m}{\mu}
\def\im{{\bf im}}
\def\ker{{\bf ker}}
\def\Pic{{\bf Pic}}
\def\re{{\bf re}}
\def\e{{\bf e}}
\def\a{\alpha}
\def\ve{\varepsilon}
\def\b{\beta}
\def\D{\Delta}
\def\d{\delta}
\def\f{{\varphi}}
\def\ga{{\gamma}}
\def\L{\Lambda}
\def\lo{{\bf l}}
\def\s{{\bf s}}
\def\cB{{\mathcal {B}}}
\def\C{{\mathbb C}}
\def\Q{{\mathbb Q}}
\def\F{{\bf F}}
\def\G{{\mathfrak {G}}}
\def\g{{\mathfrak {g}}}
\def\b{{\mathfrak {b}}}
\def\q{{\mathfrak {q}}}
\def\f{{\mathfrak {f}}}
\def\k{{\mathfrak {k}}}
\def\l{{\mathfrak {l}}}
\def\m{{\mathfrak {m}}}
\def\n{{\mathfrak {n}}}
\def\o{{\mathfrak {o}}}
\def\p{{\mathfrak {p}}}
\def\s{{\mathfrak {s}}}
\def\t{{\mathfrak {t}}}
\def\r{{\mathfrak {r}}}
\def\z{{\mathfrak {z}}}
\def\h{{\mathfrak {h}}}
\def\H{{\mathcal {H}}}
\def\O{\Omega}
\def\A{{\mathcal {A}}}
\def\B{{\mathcal {B}}}
\def\M{{\mathcal {M}}}
\def\T{{\mathcal {T}}}
\def\N{{\mathcal {N}}}
\def\U{{\mathcal {U}}}
\def\Z{{\mathbb Z}}
\def\P{{\mathcal {P}}}
\def\GVM{ GVM }
\def\iff{ if and only if  }
\def\add{{\rm add}}
\def\ld{\ldots}
\def\vd{\vdots}
\def\sl{{\rm sl}}
\def\mod{{\rm mod}}
\def\len{{\rm len}}
\def\cd{\cdot}
\def\dd{\ddots}
\def\q{\quad}
\def\qq{\qquad}
\def\ol{\overline}
\def\tl{\tilde}
\def\nn{\nonumber}

\font\pic=eurm10
\def\d{\hbox{\pic\char'0144}}

\font\block=msbm10
\def\QQ{\hbox{\block\char'0121}}

\title[On the finite $W$-algebra for the Lie superalgebra $Q(n)$]{{On the finite $W$-algebra for the Lie superalgebra $Q(n)$}
{in the non-regular case}}

\author{ Elena Poletaeva and Vera Serganova}


\address{School of Mathematical and Statistical Sciences, University of Texas Rio Grande
Valley, Edinburg, TX 78539} \email{elena.poletaeva\atSign{}utrgv.edu}
\address{ Dept. of Mathematics, University of California at Berkeley,
Berkeley, CA 94720 } \email{serganov\atSign{}math.berkeley.edu}

\maketitle

\begin{abstract} In this paper we study the finite $W$-algebra for the queer Lie superalgebra $Q(n)$ associated with the non-regular even nilpotent coadjoint orbits in the case when the corresponding nilpotent element has Jordan blocks each of size $l$. We prove that this
finite $W$-algebra is isomorphic to
a quotient of the super-Yangian of $Q({n\over l})$
\end{abstract}

\section{Introduction}
A finite $W$-algebra is a certain associative algebra attached to a
pair $(\g,e)$ where $\g$ is a complex semisimple Lie algebra and $e\in\g$
is a nilpotent element.
Geometrically a finite $W$ algebra is a
quantization of the Poisson structure on the so-called Slodowy slice
(a transversal slice to the orbit of $e$ in the adjoint
representation).

In the case when $e=0$ the finite $W$-algebra
coincides with the universal enveloping algebra $U(\g)$ and in the
case when $e$ is a {\it regular} nilpotent element, the corresponding
$W$-algebra coincides with the center of $U(\g)$. The latter case was
studied by B. Kostant \cite{Ko}
who was motivated by applications to generalized Toda lattices. The
general definition of a finite $W$-algebra was given by
A. Premet in \cite{Pr1} (see also \cite{L}).
In the case of Lie superalgebras,
finite $W$-algebras have been extensively studied by mathematicians and physicists in
[1, 2, 10-14, 18-20].

E. Ragoucy and P. Sorba first observed that
in the case when $\g$ is the general linear Lie algebra
and $e$ consists of $n$ Jordan blocks each of size $l$,
the finite $W$-algebra for $\g$ is isomorphic to the truncated Yangian
of level $l$ associated to $\g\l(n)$, which is a certain quotient of the Yangian $Y_{n}$ for $\g\l(n)$ \cite{RS}.
J. Brundan and A. Kleshchev generalized this result to an arbitrary nilpotent $e$, and obtained
a realization of the finite $W$-algebra for the general linear Lie algebra
as a quotient of a so-called shifted Yangian \cite{BK1} (see also \cite{BK2}).

For the general linear Lie superalgebra $\g = \g\l(m|n)$,
a connection between finite $W$-algebras for $\g$ and super-Yangians was firstly observed
by C. Briot and E. Ragoucy \cite{BR}.
In a more recent article, J. Brown, J. Brundan and S. Goodwin described
principal finite $W$-algebras for $\g\l(m|n)$ associated to regular (principal) nilpotent $e$ as truncations of shifted super-Yangians of $\g\l(1|1)$ \cite{BBG}.
After that, Y. Peng described the finite $W$-algebra for $\g = \g\l(m|n)$
 associated to an $e$ in the case when the Jordan type of $e$ satisfies the following condition: $e = e_m\oplus e_n$, where $e_m$ is principal nilpotent in $\g\l(m|0)$ and the sizes
of the Jordan blocks of $e_n$ are all greater or equal to $m$ \cite{Pe}.

For a Lie superalgebra $\g = \g_{\bar 0}\oplus \g_{\bar 1}$ with a
reductive even part $\g_{\bar 0}$ we denote
by $W_\chi$ the finite $W$-algebra associated
to an even  nilpotent element
$\chi\in \g_{\bar 0}^*\subset \g^*$ in the coadjoint representation.
If $\g$ admits an even non-degenerate invariant form, then $\g\cong\g^*$ and $\chi(x) = (e|x)$ for some nilpotent $e\in \g_{\bar 0}$. If $\g$ is the queer
 Lie superalgebra $Q(n)$, then it admits an odd non-degenerate invariant form. In this case $\g\cong\Pi(\g^*)$ and $\chi(x) = (E|x)$ for some nilpotent $E\in \g_{\bar 1}$.

In \cite{PS2} we studied in detail $Q(n)$ in the regular case. In particular, we proved that
$W_\chi$ for $Q(n)$ associated to a regular nilpotent $\chi$
is isomorphic to a quotient of the super-Yangian of $Q(1)$ (Theorem 6.2).
An interesting problem is to extend this result to $Q(n)$ associated to an {\it arbitrary} even nilpotent $\chi$.

In this work we consider  the case when the corresponding nilpotent element
has Jordan blocks each of size $l$.
We construct a set of generators  of $W_{\chi}$ (Theorem \ref {T2}) and prove that
$W_{\chi}$ is isomorphic to a quotient of the super-Yangian of $Q({n\over l})$
(Theorem \ref {T1} and Corollary \ref{corol}). This proves the conjecture, which we formulated in
\cite{P2}.

A. Premet has proved that if $\g$ is a semi-simple Lie algebra, then the associated
graded algebra $Gr_K W_{\chi}$ with respect to the Kazhdan filtration is isomorphic to $S(\g^{\chi})$
(the symmetric algebra of the annihilator $\g^{\chi}$ of $\chi$ in $\g$) (see \cite {Pr1}).

In \cite{PS2} we formulated the following conjecture (Conjecture 2.8):
 Assume that $\g$ is a Lie superalgebra with a reductive even part $\g_{\bar 0}$ endowed with a $\Z$-grading
 $\g = \oplus_{j\in\Z} \g_j$, which is good for $\chi$. Then
 if $\hbox{dim}(\g_{-1})_{\bar 1}$ is even, we have that $Gr_KW_\chi\simeq S(\g^\chi)$ and if
$\hbox{dim}(\g_{-1})_{\bar 1}$ is odd, then  $Gr_KW_\chi\simeq S(\g^\chi)\otimes \C[\xi]$,
where $\C[\xi]$ is the exterior algebra generated by one element $\xi$.

For $\g = \g\l(m|n)$ and a regular nilpotent $\chi$ this conjecture is proven in \cite{BBG}.
Recently Y. Zeng and B. Shu have proved this conjecture
for  a basic Lie superalgebra  $\g$ over $\C$ of any type except
$D(2,1;\alpha)$, where $\alpha\not\in\bar{\QQ}$ (\cite{ZS}, Theorem 0.1).
In \cite{P1} it is proven for $D(2,1;\alpha)$ and a regular nilpotent $\chi$.
We proved this conjecture for $\g = Q(n)$ in the regular case in \cite{PS2}
(Corollary 4.9). It follows from our results, that the conjecture is also true
in the case that we consider in this paper (Corollary \ref{cor}).

Notice that in Theorem \ref {T1} we realized the finite $W$-algebra for $Q(n)$ inside $U(Q({n\over l}))^{\otimes l}$ as
\begin{equation}
W_{\chi}\cong U^{\otimes l}\circ \Delta_l^{op} (Y(Q({n\over l}))),
\nonumber
\end{equation}

\noindent
where $\Delta^{op}$ is the opposite comultiplication in $Y(Q({n\over l}))$ and
$U$ is its homomorphism into $U(Q({n\over l}))$ defined in \cite{NS}.
Then we obtained a realization of the same subspace as
\begin{equation}
W_{\chi}\cong ev^{\otimes l}\circ \Delta_l (Y(Q({n\over l}))),
\nonumber
\end{equation}

\noindent
where $\Delta$ is the  comultiplication and $ev$ is the evaluation homomorphism (Corollary \ref {COR}).
Note that the latter realization is in the spirit of \cite {BK1} and \cite {BBG}, where the authors used (shifted) truncated (super)-Yangians.

\section{Finite $W$-algebras for Lie superalgebras}

Let $\g = \g_{\bar 0}\oplus \g_{\bar 1}$ be a Lie superalgebra with reductive even part $\g_{\bar 0}$.
Let $\chi\in \g_{\bar 0}^*\subset\g^*$ be an even nilpotent element
in the coadjoint representation, i.e. the closure of the $G_{\bar 0}$-orbit of $\chi$ in $\g_{\bar 0}^*$
(where $G_{\bar 0}$ is the algebraic reductive group of $\g_{\bar 0}$)
contains zero.

\begin{definition}
{\it The annihilator of $\chi$ in $\g$} is
$$\g^\chi=\{x\in\g \hbox{ }|\hbox{ }\chi([x,\g])=0\}.$$
\end{definition}

\begin{definition}
{\it A good $\Z$-grading for $\chi$} is a $\Z$-grading
$\g = \oplus_{j\in\Z} \g_j $ satisfying the following two conditions:

(1) $\chi(\g_j)=0$ if $j\neq -2$;

(2) $\g^\chi$  belongs to $\bigoplus_{j\geq 0} \g_j$.

\end{definition}

\noindent
Note that $\chi([\cdot,\cdot])$ defines a non-degenerate skew-symmetric even bilinear form on
$\g_{-1}$. Let $\l$ be a maximal isotropic subspace with respect to
this form. We consider
a nilpotent subalgebra
$\m = (\oplus_{j\leq -2}\g_j)\bigoplus \l$ of $\g$.
The restriction of $\chi$ to $\m$,
$\chi: \m\longrightarrow \C$,
defines a one-dimensional representation
$\C_{\chi} = <v>$ of $\m$.
Let $I_{\chi}$ be the left ideal of $U(\g)$
generated by $a - \chi(a)$ for all $a\in \m$.

\begin{definition}
The induced $\g$-module
$$Q_{\chi} := U(\g)\otimes_{U(\m)}\C_{\chi} \cong U(\g)/I_{\chi}$$
is called {\it the generalized Whittaker module}.
\end{definition}

\begin{definition}
{\it The finite $W$-algebra} associated to the nilpotent element
$\chi$ is
$$W_{\chi} := \hbox{End}_{U(\g)}(Q_{\chi})^{op}.$$
\end{definition}
As in the Lie algebra case, the superalgebras $W_\chi$ are all
isomorphic for different choices of good $\Z$-gradings and maximal
isotropic subspaces $\l$ \cite{Z}.

\noindent
If $\g$ admits an even non-degenerate $\g$-invariant supersymmetric bilinear form, then $\g\simeq \g^*$
and $\chi(x)=(e|x)$ for some nilpotent $e\in \g_{\bar 0}$ (i.e. $\hbox{ad}e$ is a nilpotent endomorphism of $\g$).
By the Jacobson--Morozov theorem  $e$ can be included in $\s\l(2) = <e, h, f>$.
As in the Lie algebra case, the linear operator $\hbox{ad} h$ defines a Dynkin $\Z$-grading
$\g = \bigoplus_{j\in \Z}\g_j$, where
$$\g_j = \lbrace x\in \g \hbox{ }|\hbox{ }\hbox{ad} h(x) = jx\rbrace.$$
As follows from the representation theory of $\s\l(2)$, the Dynkin $\Z$-grading
is good for $\chi$.
Let $\g^e := \hbox{Ker}(\hbox{ad} e)$. Clearly, $\g^e = \g^{\chi}$.
Note that as in the Lie algebra case,
$\dim \g^e = \dim \g_0 + \dim \g_1$.

Note that by Frobenius reciprocity
$$\hbox{End}_{U(\g)}(Q_{\chi})= \hbox{Hom}_{U(\m)}(\C_{\chi},Q_{\chi}).$$
That defines an identification of $W_\chi$ with the subspace
$$Q_\chi^{\m}= \lbrace u\in Q_{\chi}\hbox{ }| \hbox{ }au=\chi(a)u\hbox{ for all } a\in \m\rbrace.$$
In what follows we denote by  $\pi:U(\g)\to U(\g)/I_\chi$  the natural
projection. By above
$$W_{\chi} = \lbrace \pi(y) \in U(\g)/I_{\chi} \hbox{ }| \hbox{ } (a - \chi(a))y\in I_{\chi}\hbox{ for all } a\in \m\rbrace,$$
or, equivalently,
\begin{equation}\label{walg}
W_{\chi} = \lbrace \pi(y) \in U(\g)/I_{\chi} \hbox{ }| \hbox{ ad} (a)y\in I_{\chi}\hbox{ for all } a\in \m\rbrace.
\end{equation}
The algebra structure on $W_{\chi}$ is given by
$$\pi(y_1)\pi({y}_2) = \pi({y_1y_2})$$
\vskip 0.1in
\noindent
for $y_i\in U(\g)$ such that $\hbox{ ad}(a)y_i\in I_{\chi} \hbox{ for all }  a\in \m$
and $i = 1, 2$.

\begin{definition}
A nilpotent $\chi\in\g^*_{\bar 0}$ is called {\it regular} nilpotent if $G_{\bar 0}$-orbit of $\chi$ has
maximal dimension, i.e. the dimension of $\g^\chi_{\bar 0}$ is minimal.
(Equivalently, a nilpotent $e\in\g_{\bar 0}$ is  {\it regular} nilpotent, if
$\g^e_{\bar 0}$ attains the minimal dimension, which is equal to $\hbox{rank}\g_{\bar 0}$.)
\end{definition}

\begin{theorem}(B. Kostant, \cite{Ko})
For a reductive Lie algebra $\g$ and
a regular nilpotent element $e\in\g$, the finite $W$-algebra
$W_{\chi}$ is isomorphic to the center  of $U(\g)$.
\end{theorem}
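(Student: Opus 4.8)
The plan is to exhibit the obvious algebra homomorphism $\phi\colon Z(U(\g))\to W_\chi$ and show it is bijective. Since every nilpotent element of $\g$ lies in $[\g,\g]$ and $U(\g)\cong U(\z)\otimes U([\g,\g])$ (center plus derived subalgebra), with both $Z(U(\g))$ and $W_\chi$ factoring compatibly through this tensor decomposition, one reduces at once to the case $\g$ semisimple. Include $e$ in an $\s\l(2)$-triple $(e,h,f)$. Because $e$ is regular, the Dynkin grading by $\hbox{ad}\,h$ is \emph{even}, so $\g_{-1}=0$; hence the maximal isotropic subspace $\l\subseteq\g_{-1}$ is zero and $\m=\bigoplus_{j<0}\g_j=:\n$ is a maximal nilpotent subalgebra on which $\chi$ restricts to a nondegenerate character. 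For $z\in Z(U(\g))$ one has $\hbox{ad}(a)z=0\in I_\chi$ for all $a\in\m$, so $\pi(z)\in W_\chi$; and $\pi(z_1)\pi(z_2)=\pi(z_1z_2)$, so $\phi(z):=\pi(z)$ is an algebra homomorphism carrying $1$ to the unit of $W_\chi$. The theorem is exactly the assertion that $\phi$ is an isomorphism.

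I would then put on $U(\g)$, and hence on $Q_\chi$ and $W_\chi$, the Kazhdan filtration ($\deg x=j+2$ for $x\in\g_j$), with respect to which $\phi$ is filtered, so it suffices to prove $Gr_K\phi$ is bijective. On the source, a standard symbol computation gives $Gr_K Z(U(\g))=S(\g)^{\g}$, which by the Chevalley restriction theorem is a polynomial algebra on $\ell=\hbox{rk}\,\g$ homogeneous generators (of Kazhdan degrees $2d_1,\dots,2d_\ell$). On the target, PBW relative to $\g=\m\oplus\b$ with $\b=\bigoplus_{j\ge 0}\g_j$ identifies $Gr_K Q_\chi$ with $\C[\chi+\m^\perp]$, the coordinate ring of the affine subspace of $\g^*$ of functionals agreeing with $\chi$ on $\m$; under this identification $Gr_K\phi$ is restriction of invariants, $S(\g)^\g\ni\sigma\mapsto\sigma|_{\chi+\m^\perp}$. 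Using the Killing form to identify $\g\cong\g^*$, the Kostant--Slodowy slice $e+\g^f$ sits inside $\chi+\m^\perp$, and Kostant's theorem on the regular orbits says that the restriction $\C[\g]^G\to\C[e+\g^f]$ is an isomorphism onto a polynomial ring in $\ell$ generators. Consequently $Gr_K\phi$ is injective (so $\phi$ is injective), with image exactly $\C[e+\g^f]$.

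What remains is surjectivity of $Gr_K\phi$, i.e.\ that $Gr_K W_\chi$ is no larger than $\C[e+\g^f]$. The easy half is $Gr_K W_\chi\subseteq\C[\chi+\m^\perp]^{M}$ for the unipotent group $M$ with $\hbox{Lie}\,M=\m$: the defining $\m$-invariance of a Whittaker vector becomes, in the associated graded, annihilation by the Hamiltonian vector fields of the elements of $\m$, i.e.\ $M$-invariance. The hard half is the reverse inclusion, and here I would invoke the transversality of the slice, namely that the action morphism $M\times(e+\g^f)\to\chi+\m^\perp$ is an isomorphism of varieties; this gives $\C[\chi+\m^\perp]^M\cong\C[e+\g^f]$, forcing $Gr_K\phi$ to be onto (and incidentally recovering $Gr_K W_\chi\cong S(\g^e)$, the semisimple prototype of the later theorem of Premet). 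Then $Gr_K\phi$ is bijective, and a standard argument with filtered algebras shows $\phi$ itself is an algebra isomorphism $Z(U(\g))\cong W_\chi$. The main obstacle is precisely this last step: proving that $e+\g^f$ is a cross-section for the $M$-action on $\chi+\m^\perp$ — equivalently, in Kostant's original approach \cite{Ko}, that $Q_\chi$ is a \emph{free} module over $Z(U(\g))$ with its space of Whittaker vectors a free submodule of rank one; the remaining ingredients are bookkeeping with PBW bases together with the classical invariant theory of the regular orbit.
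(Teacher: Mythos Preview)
The paper does not prove this theorem: it is stated as a classical result of Kostant and attributed to \cite{Ko} with no argument given, so there is no ``paper's own proof'' to compare against. Your proposal is therefore being measured against the literature rather than against anything in this article.

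That said, your outline is essentially correct and follows what has become the standard modern route (Premet, Gan--Ginzburg) rather than Kostant's original argument. Kostant worked directly with Whittaker modules: he showed that the generalized Whittaker module $Q_\chi$ is free of rank one over $Z(U(\g))$ and that its space of Whittaker vectors is a free $Z(U(\g))$-submodule of rank one, from which $W_\chi\cong Z(U(\g))$ follows. Your argument instead passes to the associated graded for the Kazhdan filtration, identifies $Gr_K W_\chi$ with $M$-invariants on $\chi+\m^\perp$, and then invokes the transversality isomorphism $M\times(e+\g^f)\xrightarrow{\sim}\chi+\m^\perp$. Both approaches ultimately rest on the same geometric fact about the regular orbit; yours has the advantage of fitting directly into the framework the present paper uses (and indeed your final parenthetical remark recovers the $\ell=n$ case of Premet's theorem quoted just after this one).

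Two small points worth tightening. First, the claim $Gr_K Z(U(\g))=S(\g)^\g$ deserves a one-line justification: since central elements have $\hbox{ad}\,h$-weight zero, the Kazhdan degree of a homogeneous central element equals twice its PBW degree, so the Kazhdan and PBW filtrations agree on $Z(U(\g))$ up to rescaling, and the usual symbol map gives $S(\g)^\g$. Second, your identification of $Gr_K Q_\chi$ with $\C[\chi+\m^\perp]$ and of $Gr_K\phi$ with restriction of invariants is correct but is exactly the content of the Premet/Gan--Ginzburg setup; if you are writing this as a self-contained proof you should either cite it or sketch why elements of $\m$ of Kazhdan degree $\le 0$ give the defining equations of the affine subspace $\chi+\m^\perp$ in $\g^*$.
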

This theorem does not hold for Lie superalgebras, since $W_{\chi}$ must have a non-trivial odd part,
and the center  of $U(\g)$ is even.

\begin{definition}
Define a $\Z$-grading
on $T(\g)$ by setting the degree of $g\in \g_j$ to be $j+2$. This induces a
filtration on $U(\g)$ and therefore on $U(\g)/I_\chi$ which is called
the {\it Kazhdan filtration}. We
will denote by $Gr_K$ the corresponding graded algebras. Since by (\ref{walg})
$W_\chi\subset U(\g)/I_\chi$, we have an induced filtration on $W_\chi$.
\end{definition}

\begin{theorem}(A. Premet, \cite{Pr1})
Let $\g$ be a semi-simple Lie algebra.
Then the associated graded algebra $Gr_KW_{\chi}$ is isomorphic to $S(\g^{\chi})$.
\end{theorem}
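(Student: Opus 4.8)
The strategy is to realise both $Gr_K W_\chi$ and $S(\g^\chi)$ as the coordinate ring of the Slodowy slice $\mathcal{S} := e + \g^f$, where $\langle e,h,f\rangle$ is the $\s\l_2$-triple from the Jacobson--Morozov theorem, $\g^f := \hbox{Ker}(\hbox{ad}\,f)$, and the Killing form identifies $\g^\chi = \g^e$ with $(\g^f)^*$ compatibly with the relevant gradings. I work with the Dynkin grading (good for $\chi$) and fix a maximal isotropic $\l \subset \g_{-1}$ with a complementary isotropic $\l'$, so $\mathfrak{p} := \l' \oplus \bigoplus_{j\geq 0}\g_j$ is a graded vector-space complement to $\m$ in $\g$. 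The first task is to describe $Gr_K Q_\chi$ explicitly. By the PBW theorem the composite $S(\mathfrak{p}) \hookrightarrow S(\g) \to U(\g)$ followed by the projection $\pi : U(\g) \to Q_\chi$ is a linear isomorphism, compatible with the Kazhdan filtrations, so $Gr_K Q_\chi \cong S(\mathfrak{p})$ as graded vector spaces. Taking Kazhdan symbols of the generators $a - \chi(a)$ of $I_\chi$ (for $a \in \g_{-2}$ the symbol is $a - \chi(a)$ in degree $0$; for $a \in \g_j$ with $j \leq -3$, or $a \in \l$, it is $a$, in degree $j+2$ resp.\ $1$) and using the Hilbert series just obtained, one checks that $\hbox{gr}_K I_\chi$ is precisely the ideal of $S(\g)$ cut out by these symbols; identifying $\g^* \cong \g$ via the Killing form, this yields an isomorphism of graded commutative algebras $Gr_K Q_\chi \cong \C[e + \m^\perp]$, where $e + \m^\perp$ is a Kazhdan-homogeneous affine subspace. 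Moreover $\hbox{ad}\,\m$ preserves the Kazhdan filtration (each $\hbox{ad}\,a$, $a \in \g_j$ with $j \leq -1$, lowers Kazhdan degree), hence acts on $Gr_K Q_\chi$, and on $\C[e+\m^\perp]$ this integrates to the adjoint action of the unipotent group $M = \exp\m$.

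By Frobenius reciprocity and the description recalled above, $W_\chi$ is the space of $\hbox{ad}\,\m$-invariants in $Q_\chi$; since passing to symbols on a filtered subspace is injective, this gives an injective homomorphism of graded algebras
\[ \sigma : Gr_K W_\chi \hookrightarrow (Gr_K Q_\chi)^{\m} = \C[e + \m^\perp]^M . \]
The geometric input is the transverse-slice lemma: the adjoint action map $M \times \mathcal{S} \to e + \m^\perp$, $(g,x) \mapsto \hbox{Ad}(g)x$, is an isomorphism of affine varieties. I would check this via its differential at $(1,e)$, the sum of $[\m,e]$ and $T_e\mathcal{S} = \g^f$ inside $T_e(e+\m^\perp) = \m^\perp$: this is an isomorphism since $\hbox{ad}\,e$ is injective on $\m$ (as $\m \cap \g^e = 0$), $\hbox{ad}\,e(\m) \cap \g^f = 0$ by $\s\l_2$-theory, both summands lie in $\m^\perp$, and $\dim[\m,e] + \dim\g^f = \dim\m + \dim\g^e = \dim\m^\perp$ (using $\dim\g^e = \dim\g_0 + \dim\g_1$ and $\dim\g_j = \dim\g_{-j}$); $M$-equivariance propagates this to all points and unipotence of $M$ promotes the bijective morphism to an isomorphism. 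Thus $\mathcal{S}$ is a section of the $M$-action, so $\C[e+\m^\perp]^M \cong \C[\mathcal{S}] \cong S((\g^f)^*) \cong S(\g^e) = S(\g^\chi)$; tracking the Kazhdan grading, a vector in $\g^\chi \cap \g_j$ acquires degree $j+2$, which is exactly the grading on $S(\g^\chi)$ intended in the statement. Hence $\sigma$ embeds $Gr_K W_\chi$ as a graded subalgebra of $S(\g^\chi)$, and the theorem reduces to surjectivity of $\sigma$.

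Surjectivity is the main obstacle, and I would establish it by constructing enough invariants directly. For each element $x$ of a homogeneous basis of $\g^\chi$, say $x \in \g^\chi \cap \g_n$ (target Kazhdan degree $n+2$), I want a lift $\Theta \in U(\g)$ with $\pi(\Theta) \in W_\chi$ and $\sigma(\pi(\Theta)) = x$. Begin with the PBW lift $\Theta^{(0)} \in S(\mathfrak{p}) \subset U(\g)$ of $x$; the defect $\hbox{ad}(a)\Theta^{(0)} \bmod I_\chi$ (for $a \in \m$) vanishes in top Kazhdan degree, and because $\mathcal{S}$ is transverse to the $M$-orbits its leading symbol lies in the span of $\hbox{ad}(\m)$ applied to lower-degree elements of $Gr_K Q_\chi$, so it can be cancelled by subtracting a correction term of strictly smaller Kazhdan degree; since Kazhdan degrees are bounded below by $0$, iterating terminates and produces $\Theta$. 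Monomials in the resulting $\Theta$'s have as symbols the corresponding monomials in the chosen basis of $\g^\chi$, and these span $S(\g^\chi)$ in every degree, so $\sigma$ is onto; with the injectivity above this gives $Gr_K W_\chi \cong S(\g^\chi)$. (The same content can be packaged homologically --- $H^i(\m,Q_\chi) = 0$ for $i>0$, $H^0(\m,Q_\chi) = W_\chi$, together with the PBW flatness above, then a Hilbert-series comparison with $\C[\mathcal{S}]$ --- but the essential non-formal ingredient is still transversality.) The step demanding the most care is the convergence of this correction procedure: checking that each successive obstruction genuinely drops in Kazhdan degree and lies in the image of $\hbox{ad}\,\m$ on $Gr_K Q_\chi$ --- precisely where transversality enters.
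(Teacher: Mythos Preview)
The paper does not actually prove this theorem: it is stated as background and attributed to Premet \cite{Pr1}, with no argument given. So there is no ``paper's own proof'' to compare against here.

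That said, your sketch is essentially the standard Premet/Gan--Ginzburg argument via the geometry of the Slodowy slice, and the outline is correct. Two remarks on precision. First, in the surjectivity step your justification that the leading obstruction ``lies in the span of $\hbox{ad}(\m)$ applied to lower-degree elements'' is really the statement that $H^1(\m, Gr_K Q_\chi)=0$; this follows from the freeness of the $M$-action on $e+\m^\perp$ (which your transversality isomorphism $M\times\mathcal{S}\xrightarrow{\sim} e+\m^\perp$ establishes), but you should say so explicitly rather than invoking transversality loosely. Second, the claim that a bijective morphism between affine spaces is an isomorphism needs more than ``unipotence of $M$'': the clean argument is that the action map is $\mathbb{G}_m$-equivariant for the contracting Kazhdan action, so it suffices to check the differential at the fixed point $(1,e)$, which you did correctly. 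With these two points tightened, your argument is the standard one and is sound.
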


\noindent
To generalize this result to the super case, we assume that
$\l'$ is some subspace in $\g_{-1}$ satisfying the following two properties:

(1) $\g_{-1}=\l\oplus\l'$,

(2) $\l'$ contains a maximal isotropic subspace with respect to the form defined by $\chi([\cdot,\cdot])$ on $\g_{-1}$.

If $\hbox{dim}(\g_{-1})_{\bar 1}$ is even, then $\l'$ is a maximal isotropic subspace. If $\hbox{dim}(\g_{-1})_{\bar 1}$ is
odd, then $\l^\perp\cap \l'$ is one-dimensional and we fix $\theta\in \l^\perp\cap \l'$ such that $\chi([\theta,\theta])=2$. It is clear that $\pi(\theta)\in W_\chi$
and $\pi(\theta)^2=1$.

Let $\p=\bigoplus _{j\geq 0}\g_j$.
By the PBW theorem, $U(\g)/I_\chi\simeq S(\p\oplus\l')$ as a vector space. Therefore
${Gr}_K(U(\g)/I_\chi)$ is isomorphic to $S(\p\oplus\l')$ as a vector space.
The good $\Z$-grading of $\g$ induces the grading on $S(\p\oplus\l')$.
For any $X\in U(\g)/I_\chi$ let $Gr_K(X)$ denote the corresponding element in ${Gr}_K(U(\g)/I_\chi)$,
and $P(X)$ denote the highest weight component of $Gr_K(X)$ in this $\Z$-grading.

\begin{theorem}\label{maint}
(\cite{PS2}, Proposition 2.7)
  Let $y_1,\dots,y_p$ be a basis in $\g^{\chi}$ homogeneous in the good $\Z$-grading.
Assume that there exist $Y_1,\dots, Y_p\in W_\chi$ such that $P(Y_i)=y_i$ for all $i=1,\dots, p$. Then

\noindent
(a) if $\hbox{dim}(\g_{-1})_{\bar 1}$ is even, then  $Y_1,\dots,Y_p$ generate $W_\chi$,  and
if $\hbox{dim}(\g_{-1})_{\bar 1}$ is odd, then
$Y_1,\dots,Y_p$ and $\pi(\theta)$ generate $W_\chi$;

\noindent
(b) if $\hbox{dim}(\g_{-1})_{\bar 1}$ is even, then ${Gr}_KW_\chi\simeq S(\g^{\chi})$, and
 if $\hbox{dim}(\g_{-1})_{\bar 1}$ is odd, then
 $Gr_KW_\chi\simeq S(\g^{\chi})\otimes \C[\xi]$, where $\C[\xi]$ is the exterior algebra generated by one element $\xi$.
\end{theorem}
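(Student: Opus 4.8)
The plan is to establish parts (a) and (b) simultaneously by a filtered-algebra argument, using the hypothesis that the highest-weight components $P(Y_i)$ exhaust a homogeneous basis of $\g^\chi$. First I would record the two structural facts already available: by the PBW theorem $\mathrm{Gr}_K(U(\g)/I_\chi)\simeq S(\p\oplus\l')$ as a graded vector space, and $W_\chi\subset U(\g)/I_\chi$ carries the induced Kazhdan filtration; and (this is the key input I would isolate as a preliminary lemma) $\mathrm{Gr}_K W_\chi$, as a subalgebra of $S(\p\oplus\l')$, is precisely the subalgebra $S(\g^\chi)$ when $\dim(\g_{-1})_{\bar 1}$ is even, and $S(\g^\chi)\otimes\C[\xi]$ with $\xi$ the image of $\theta$ when $\dim(\g_{-1})_{\bar 1}$ is odd. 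This identification of $\mathrm{Gr}_K W_\chi$ as a \emph{vector space} is the super-analogue of Premet's computation and should follow from the cohomological/associated-graded description of $W_\chi$ (the BRST-type reduction), together with the fact that $\theta\in\l^\perp\cap\l'$ contributes the extra exterior generator in the odd case; I would cite or reproduce this as the content underlying the statement, since it is really where the structure of $\g^\chi$ in the good grading enters.

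Granting that, part (b) is almost immediate once (a) is known, so the real work is (a). Here I would argue that the $Y_i$ (together with $\pi(\theta)$ in the odd case) generate by an induction on Kazhdan degree. Let $W'\subseteq W_\chi$ be the subalgebra they generate. Since $P(Y_i)=y_i$ and the $y_i$ form a homogeneous basis of $\g^\chi$, the monomials in the $y_i$ (and $\xi$) span $S(\g^\chi)$ (resp. $S(\g^\chi)\otimes\C[\xi]$) in each graded degree; hence for any $w\in W_\chi$ of Kazhdan degree $d$, the symbol $\mathrm{Gr}_K(w)\in\mathrm{Gr}_K W_\chi$ of top degree can be matched — using the multiplicativity of $P$ on products of the $Y_i$, which requires checking that no degree drop occurs when multiplying these particular elements — by a suitable polynomial $F$ in the $Y_i$ (and $\pi(\theta)$) with $P(F)=P(w)$. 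Then $w-F\in W_\chi$ has strictly smaller Kazhdan degree, and the induction closes (the base case being degree $\le 0$, where $W_\chi$ is just the scalars, or degree at most that of the lowest $y_i$). The subtlety to be careful about is that $P$ is only the top-degree component, so one must verify that $P(Y_{i_1}\cdots Y_{i_k}) = y_{i_1}\cdots y_{i_k}$ exactly — i.e. that the product in $U(\g)/I_\chi$ does not lose Kazhdan degree — which uses that $\g^\chi\subset\bigoplus_{j\ge0}\g_j$ (property (2) of a good grading) so that symbols of elements of $\g^\chi$ multiply without cancellation in $S(\p\oplus\l')$.

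The main obstacle I anticipate is precisely this control of the leading term under multiplication in the \emph{odd} case, where $\pi(\theta)^2=1$ rather than $\theta^2$ being a genuine degree-$4$ element: one must check that adjoining $\pi(\theta)$ introduces exactly the exterior factor $\C[\xi]$ and nothing more, i.e. that $\mathrm{Gr}_K W'$ does not collapse and does not overshoot. I would handle this by observing that $\pi(\theta)$ has Kazhdan degree $1$ with $P(\pi(\theta))=\xi$ the class of $\theta$ in $S(\p\oplus\l')_1$, that $\xi^2=0$ there (since $\theta$ is odd), and that $\xi$ is not in the subalgebra generated by the $y_i$ — so the induction produces all of $S(\g^\chi)\otimes\C[\xi]$ and, by the vector-space identification of $\mathrm{Gr}_K W_\chi$ from the preliminary lemma, $W'=W_\chi$. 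In the even case the same induction with just the $Y_i$ suffices, and $\mathrm{Gr}_K W_\chi\simeq S(\g^\chi)$ as algebras because the $y_i$ are algebraically independent in $S(\g^\chi)$ and the surjection $S(\g^\chi)\twoheadrightarrow\mathrm{Gr}_K W_\chi$ constructed by the induction is then forced to be an isomorphism by comparing graded dimensions.
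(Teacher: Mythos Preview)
The paper does not prove this theorem here; it is quoted from \cite{PS2}, Proposition~2.7, so there is no in-paper argument to compare against. Evaluating your proposal on its own:

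Your filtered-induction argument for (a) is correct and standard, as is the observation that (b) follows once (a) is combined with an upper bound on $\mathrm{Gr}_K W_\chi$. The difficulty is your ``preliminary lemma.'' You state it as the full identification $\mathrm{Gr}_K W_\chi = S(\g^\chi)$ (resp.\ $S(\g^\chi)\otimes\C[\xi]$), to be obtained by a BRST-type reduction that you would ``cite or reproduce.'' But this equality is exactly conclusion (b), and it is also exactly Conjecture~2.8 of \cite{PS2}, which is open in general. The entire purpose of the present theorem is to give a checkable sufficient condition---existence of the lifts $Y_i$---under which that conjecture holds in a given case. Assuming the conjecture as a preliminary lemma is therefore circular, and ``reproducing'' it unconditionally would amount to proving the conjecture rather than the theorem.

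What your induction actually requires is only the \emph{containment} $\mathrm{Gr}_K W_\chi \subseteq S(\g^\chi)$ (resp.\ $\subseteq S(\g^\chi)\otimes\C[\xi]$): that is what lets you match the symbol of an arbitrary $w\in W_\chi$ by a polynomial in the $y_i$ (and $\xi$). This upper bound can indeed be established unconditionally, by analyzing how the condition $\mathrm{ad}(a)Y\in I_\chi$ for $a\in\m$ constrains the highest-weight component $P(Y)$ inside $S(\p\oplus\l')$; this is where the genuine content of the proof in \cite{PS2} lies, and your proposal does not carry it out---the phrase ``should follow from the cohomological/associated-graded description'' is precisely the place where the substance is hidden. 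Once that upper bound is in hand, your hypothesis-driven argument supplies the reverse containment (hence (b)) and your degree-lowering induction gives (a). So the skeleton is right, but the load-bearing step is missing.
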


 \section{The queer superalgebra $Q(n)$}

  Recall that
the {\it queer} Lie superalgebra is defined as follows
$$Q(n) := \lbrace
\left(\begin{array}{c|c}
A&B\\
\hline
B&A\\
\end{array}\right)\hbox{ }|\hbox{ }A, B \hbox{ are } n\times n \hbox{ matrices}\rbrace.$$
Let $\hbox{otr} \left(\begin{array}{c|c}
A&B\\
\hline
B&A\\
\end{array}\right) = \hbox{tr} B$.

\noindent
\begin{remark}
$Q(n)$ has one-dimensional center $<z>$, where $z = 1_{2n}$.
Let
$$S{Q}(n) = \lbrace X\in Q(n)
\hbox{ }|\hbox{ otr}X = 0\rbrace.$$

\noindent
The Lie superalgebra $\tilde{Q}(n) := S{Q}(n)/<z>$ is simple for $n\geq 3$, see \cite{K}.

\end{remark}

Note that
$\g = Q(n)$ admits an {\it odd} non-degenerate $\g$-invariant supersymmetric bilinear form
$$(x | y) := \hbox{otr}(xy) \hbox{ for } x, y\in \g.$$
Therefore,
we identify the coadjoint module $\g^*$ with $\Pi(\g)$, where $\Pi$ is
the  change of parity functor.

\noindent
Let $e_{i,j}$ and $f_{i,j}$ be standard bases in $\g_{\bar 0}$ and $\g_{\bar 1}$ respectively:

\vskip 0.2in
$$e_{i,j} =
\left(\begin{array}{c|c}
E_{ij}&0\\
\hline
0&E_{ij}\\
\end{array}\right), \quad
f_{i,j} =
\left(\begin{array}{c|c}
0&E_{ij}\\
\hline
E_{ij}&0\\
\end{array}\right),$$
where $E_{ij}$ are elementary $n\times n$  matrices.

Let $\s\l(2) = <e, h, f>$, where
\begin{align}
&e = \sum_{p = 1}^{n\over l}\sum_{i=1}^{l-1}
e_{l(p-1)+i,l(p-1)+i+1},\nonumber\\
&f = \sum_{p = 1}^{n\over l}\sum_{i=1}^{l-1} i(l-i)
e_{l(p-1)+i+1,l(p-1)+i},\nonumber\\
&h = \sum_{p = 1}^{n\over l}\sum_{i=1}^{l}(l-2i+1)
e_{l(p-1)+i,l(p-1)+i}.\nonumber
\end{align}
Note that $e$ is an even nilpotent element in $Q(n)$, which is represented by a nilpotent $n\times n$-matrix
whose Jordan blocks are all of the same size $l$. Note also that
$\hbox{ad}h$ defines an even Dynkin $\Z$-grading of
$\g$:
$$\g = \bigoplus_{k = 1-l}^{l-1}\g_{2k},$$
where
\begin{align}
&\g_{2k} = <e_{l(p-1)+i,l(q-1)+i+k}\hbox{ }|\hbox{ }f_{l(p-1)+i,l(q-1)+i+k}>,
\nonumber\\
&\g_{-2k} = <e_{l(p-1)+i+k,l(q-1)+i}\hbox{ }|\hbox{ }f_{l(p-1)+i+k,l(q-1)+i}>,
\nonumber\\
&\hbox{where } k = 0, 1, \ldots, l-1, i = 1, \ldots, l-k \hbox{ and } 1\leq p,q \leq  {n\over l}.
\nonumber
\end{align}
Let  $E = \sum_{p = 1}^{n\over l}\sum_{i=1}^{l-1}
f_{l(p-1)+i,l(p-1)+i+1}$.
Since we have an isomorphism $\g^*\simeq\Pi(\g)$, an even  nilpotent $\chi\in\g^*$ can be defined by
$\chi(x) := (x|E)$ for $x \in \g$.
Note that the Dynkin $\Z$-grading is good for $\chi$.
We have that

\begin{align}
&\g^\chi=\g^E =
<\sum_{i=1}^{l-k}e_{l(p-1)+i,l(q-1)+i+k} \hbox{ }|\hbox{ }
\sum_{i=1}^{l-k}(-1)^{i+k-1}f_{l(p-1)+i,l(q-1)+i+k}>,
\nonumber\\
&\hbox{where } k = 0, 1, \ldots, l-1, \hbox{ and } 1\leq p,q \leq  {n\over l}.
\nonumber
\end{align}
Thus $\dim(\g^E) = ({n^2\over l}|{n^2\over l})$.
Note that  as in the Lie algebra case,
$\dim (\g^E) = \dim \g_0$, since the $\Z$-grading is even,
and $\g^E \subseteq \bigoplus_{k = 0}^{l-1}\g_{2k}$.
Note also that $\chi$ is regular nilpotent if and only if $e$ has a single Jordan block,
i.e. $l = n$.

Let
$$\m = \bigoplus_{j = 1}^{l-1}\g_{-2j}.$$
Note that $\m$ is generated by $e_{l(p-1)+i+1, l(q-1)+i}$ and
$f_{l(p-1)+i+1, l(q-1)+ i}$, where $i = 1, \ldots, l-1$, and
$1\leq p,q \leq  {n\over l}$. We have that
\begin{align}\label{equA1}
&\chi(e_{l(p-1)+i+1, l(q-1)+i}) = \delta_{p,q}, \hbox{ for  } i = 1, \ldots, l-1,\nonumber\\
&\chi(e_{l(p-1)+i+k, l(q-1)+i}) = 0 \hbox{ for  } k\geq 2, i = 1, \ldots, l-k,\\
&\chi(f_{l(p-1)+i+k, l(q-1)+i}) = 0
\hbox{ for  } k\geq 1, i = 1, \ldots, l-k.\nonumber
\end{align}
The left ideal $I_\chi$ and $W_\chi$ are defined now as usual. Moreover,
$$\p:=\bigoplus_{j = 0}^{l-1}\g_{2j}$$
is a parabolic subalgebra of $\g$ and $\p=\g_0\oplus\n$, where
$$\n:=\bigoplus_{j = 1}^{l-1}\g_{2j}.$$
Note that since the $\Z$-grading of $\g$ is even, then
the algebra $W_\chi$ can be regarded as a {\it subalgebra} of $U(\p)$.
In fact, from the PBW theorem,
$$U(\g) = U(\p) \oplus I_{\chi}.$$
The projection $pr: U(\g) \longrightarrow U(\p)$
along this direct sum decomposition induces an isomorphism:
$U(\g)/I_{\chi} \buildrel \sim \over \longrightarrow U(\p)$.
Let
$$U(\p)^+:=\oplus_{i>0} U(\p)_{2i}.$$
It is a two sided ideal in $U(\p)$ and $U(\p)/U(\p)^+ \cong U(\g_0)$.
Let $\vartheta: U(\p)\longrightarrow U(\g_0)$ be the natural projection.
Its restriction to $W_\chi$ is the {\it Harish-Chandra homomorphism}
$$\vartheta: W_\chi\longrightarrow U(\g_0),$$
which is injective by \cite{PS2} (Theorem 3.1).

 \section{Super-Yangian of $Q(n)$}

 Super-Yangian $Y(Q(n))$  was introduced  by M. Nazarov in \cite{N}.
Recall that
$Y(Q(n))$ is the associative unital superalgebra over $\C$ with the countable set of generators
$$T_{i,j}^{(m)}\hbox{ where }m = 1, 2, \ldots \hbox{ and }i, j = \pm 1, \pm 2, \ldots, \pm n.$$

\noindent
The $\Z_2$-grading of the algebra $Y(Q(n))$ is defined as follows:
$$p(T_{i,j}^{(m)})  =
p(i) + p(j), \hbox{ where } p(i) = 0 \hbox{ if } i>0, \hbox{ and } p(i) = 1  \hbox{ if }i<0.$$
To write down defining relations for these generators we employ the formal series

\noindent
in $Y(Q(n))[[u^{-1}]]$:
\begin{equation}\label{equY1}
T_{i,j}(u) = \delta_{i,j}\cdot 1 + T_{i,j}^{(1)}u^{-1} + T_{i,j}^{(2)}u^{-2} + \ldots.
\end{equation}
Then for all possible indices $i, j, k, l$ we have the relations

\begin{align}\label{equY2}
& (u^2 - v^2)[T_{i,j}(u), T_{k,l}(v)]\cdot (-1)^{p(i)p(k) + p(i)p(l)  +  p(k)p(l)}\\
& = (u + v)(T_{k,j}(u)T_{i,l}(v) - T_{k,j}(v)T_{i,l}(u))\nonumber\\
&-(u - v)(T_{-k,j}(u)T_{-i,l}(v) - T_{k,-j}(v)T_{i,-l}(u))\cdot (-1)^{p(k) +p(l)},
\nonumber
\end{align}
 where $v$ is a formal parameter independent of $u$, so that (\ref{equY2}) is an equality in the algebra of formal Laurent series in $u^{-1}, v^{-1}$ with
coefficients in $Y(Q(n))$.

\noindent
 For all indices $i, j$ we also have the relations
 \begin{equation}\label{equY3}
 T_{i,j}(-u) = T_{-i,-j}(u).
 \end{equation}

\noindent
Note that the relations (\ref{equY2}) and (\ref{equY3}) are equivalent to the following defining relations:

 \begin{align}\label{equY4}
 & ([T_{i,j}^{(m+1)}, T_{k,l}^{(r-1)}] - [T_{i,j}^{(m-1)}, T_{k,l}^{(r+1)}])
 \cdot (-1)^{p(i)p(k) + p(i)p(l)  + p(k)p(l)}  = \\
 & T_{k,j}^{(m)}T_{i,l}^{(r-1)} + T_{k,j}^{(m-1)}T_{i,l}^{(r)} -
 T_{k,j}^{(r-1)}T_{i,l}^{(m)} - T_{k,j}^{(r)}T_{i,l}^{(m-1)}\nonumber\\
 & + (-1)^{p(k) + p(l)}(-T_{-k,j}^{(m)}T_{-i,l}^{(r-1)} + T_{-k,j}^{(m-1)}T_{-i,l}^{(r)} +
 T_{k,-j}^{(r-1)}T_{i,-l}^{(m)} - T_{k,-j}^{(r)}T_{i,-l}^{(m-1)}),
 \nonumber
\end{align}

\begin{equation}\label{equY5}
 T_{-i,-j}^{(m)} = (-1)^m T_{i,j}^{(m)},
\end{equation}
where $m, r = 1, \ldots$ and $T_{i,j}^{(0)} = \delta_{i,j}$.

Recall that $Y(Q(n))$ is a Hopf superalgebra   with comultiplication given by the formula
\begin{equation}\label{delta}
\Delta(T_{i,j}^{(r)})=\sum_{s=0}^r\sum_{k} (-1)^{(p(i)+p(k))(p(j)+p(k))}T_{i,k}^{(s)}\otimes T_{k,j}^{(r-s)}.
\end{equation}
The opposite comultiplication is given by
\begin{equation}\label{opp}
\Delta^{op}(T_{i,j}^{(r)})=\sum_{s=0}^r\sum_{k}  T_{k,j}^{(r-s)}\otimes T_{i,k}^{(s)}.
\end{equation}
Combine the series (\ref{equY1}) into the single element
$$T(u) = \sum_{i,j}E_{i,j}\otimes T_{i,j}(u)$$
of the algebra $\hbox{End}(\C^{n|n})\otimes Y(Q(n))[[u^{-1}]]$.
The element $T(u)$ is invertible and we put
\begin{equation}\label{tilda}
T(u)^{-1} = \sum_{i,j}E_{i,j}\otimes \tilde{T}_{i,j}(u).
\end{equation}
The assignment
$T_{i,j}(u) \mapsto \tilde{T}_{i,j}(u)$ defines the {\it antipodal map}
\begin{equation}\label{antipodal}
S: Y(Q(n)) \longrightarrow Y(Q(n)),
\end{equation}
which is an anti-automorphism of the $\Z_2$-graded algebra
$Y(Q(n))$. Recall that an {\it anti-homomorphism} $\varphi: \A\rightarrow \B$ of associative Lie superalgebras
is a linear map, which preserves the $\Z_2$-grading and satisfies for any homogeneous $X, Y\in \A$
$$\varphi(XY) = (-1)^{p(X)p(Y)}\varphi(Y)\varphi(X).$$
Note that
$T(u) = I + A$, \hbox{where }
$$I = \sum_{i,j}E_{i,j}\otimes\delta_{i,j}, \quad
A = \sum_{i,j}\sum_{r>0}E_{i,j}\otimes T_{i,j}^{(r)}u^{-r}.$$
Then
$$T(u)^{-1} = I - A + A^2 - A^3 + \ldots$$
Note that
$$(E_{i,j}\otimes T_{i,j}^{(r_1)}u^{-r_1})
(E_{j,k}\otimes T_{j,k}^{(r_2)}u^{-r_2}) =
(-1)^{(p(i) + p(j))(p(j) + p(k))}(E_{i,k}\otimes T_{i,j}^{(r_1)}T_{j,k}^{(r_2)}u^{-r_1-r_2}).$$
Hence, if $r\geq 1$, then
\begin{align}\label{antiS}
&S(T_{i,j}^{(r)}) = -T_{i,j}^{(r)} + \\
&\sum_{m=2}^{\infty} (-1)^m
(\sum_{i_1, \ldots i_{m-1}\in {\lbrace \pm 1, \ldots \pm n\rbrace}}
(\sum_{\substack{r_1 + \ldots + r_m = r\\r_1, \ldots, r_m >0}}
(-1)^{\nu (i,i_1,\ldots, i_{m-1},j)}
T_{i,i_1}^{(r_1)}T_{i_1,i_2}^{(r_2)}\ldots T_{i_{m-1},j}^{(r_m)})),\nonumber
\end{align}
where
$$\nu (i_0,i_1,\ldots, i_{m-1},i_m) =
\sum_{0\leq k<s\leq m-1}(p(i_k) + p(i_{k+1})(p(i_s)+p(i_{s+1})).$$
If $r=0$, then $S(T_{i,j}^{(0)}) = \delta_{i,j}$.

The evaluation homomorphism ${ev}: Y(Q(n))\to U(Q(n))$ is defined as follows
\begin{equation}\label{eval}
T_{i,j}^{(1)}\mapsto -e_{j,i},\quad T_{-i,j}^{(1)}\mapsto -f_{j,i}  \hbox{ for } i, j > 0, \quad T_{i,j}^{(0)}\mapsto \delta_{i,j}, \quad T_{i,j}^{(r)}\mapsto 0 \hbox{ for } r > 1.
\end{equation}
Recall that for any Lie superalgebra $\g$, the principal anti-automorphism $\alpha$ of the enveloping superalgebra
$U(\g)$ is defined by the assignment $\alpha: X\mapsto -X$ for all $X\in \g$.
Then ${\bar{ev}}:= \alpha\circ ev$ in an anti-homomorphism ${\bar{ev}}: Y(Q(n))\to U(Q(n))$, and it is given as follows
\begin{equation}\label{antieval}
T_{i,j}^{(1)}\mapsto e_{j,i},\quad T_{-i,j}^{(1)}\mapsto f_{j,i}  \hbox{ for } i, j > 0, \quad T_{i,j}^{(0)}\mapsto \delta_{i,j}, \quad T_{i,j}^{(r)}\mapsto 0 \hbox{ for } r > 1.
\end{equation}
In \cite{S} A. Sergeev recursively defined
the elements $e_{i,j}^{(m)}$ and $f_{i,j}^{(m)}$ of  $U(Q(n))$:
\begin{align}\label{equG1}
& e_{i,j}^{(m)} = \sum_{k = 1}^n e_{i,k}e_{k,j}^{(m-1)} +
(-1)^{m+1}\sum_{k = 1}^n f_{i,k}f_{k,j}^{(m-1)},\\
&f_{i,j}^{(m)} = \sum_{k = 1}^n e_{i,k}f_{k,j}^{(m-1)} +
(-1)^{m+1}\sum_{k = 1}^n f_{i,k}e_{k,j}^{(m-1)},
\nonumber
\end{align}
where $e_{i,j}^{(0)} = \delta_{i,j}$ and $f_{i,j}^{(0)} = 0$.
Then
\begin{align}\label{equG2}
&[e_{i,j}, e_{k,l}^{(m)}] = \delta_{j,k}e_{i,l}^{(m)} - \delta_{i,l}e_{k,j}^{(m)},\quad
[e_{i,j}, f_{k,l}^{(m)}] = \delta_{j,k}f_{i,l}^{(m)} - \delta_{i,l}f_{kj}^{(m)},\\
&[f_{i,j}, e_{k,l}^{(m)}] = (-1)^{m+1}\delta_{j,k}f_{i,l}^{(m)} - \delta_{i,l}f_{k,j}^{(m)},\nonumber\\
&[f_{i,j}, f_{k,l}^{(m)}] = (-1)^{m+1}\delta_{j,k}e_{i,l}^{(m)} + \delta_{i,l}e_{k,j}^{(m)}.
\nonumber
\end{align}

\begin{lemma}\label{LemU}
There exists  a homomorphism
$U: Y(Q(n))\to U(Q(n))$ defined as follows
\begin{align}\label{homomorphism}
&T_{i,j}^{(r)}\mapsto (-1)^r e_{j,i}^{(r)}, \hbox{ if } i>0, j>0, r>0,\nonumber\\
&T_{i,j}^{(r)}\mapsto (-1)^r f_{j,-i}^{(r)}, \hbox{ if } i<0, j>0, r>0,
T_{i,j}^{(0)}\mapsto \delta_{i, j}.
\end{align}
\end{lemma}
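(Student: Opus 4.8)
The plan is to prove that the assignment (\ref{homomorphism}) respects the defining relations (\ref{equY4}) and (\ref{equY5}) of $Y(Q(n))$; since $Y(Q(n))$ is presented by the generators $T_{i,j}^{(m)}$ modulo exactly these relations, this is all that is needed. First I would note that (\ref{homomorphism}) prescribes $U(T_{i,j}^{(r)})$ only when $j>0$, and that (\ref{equY5}) then forces $U(T_{i,j}^{(r)}):=(-1)^r\,U(T_{-i,-j}^{(r)})$ when $j<0$; this is unambiguous, because for $j>0$ the two cases in (\ref{homomorphism}) cover disjoint ranges of $i$, and applying (\ref{equY5}) twice is the identity on index pairs (the signs $(-1)^m$ cancel). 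With $U$ so defined on all generators, relation (\ref{equY5}) holds for the images by construction, and one checks that $U$ preserves parity, since $e_{j,i}^{(m)}$ is even and $f_{j,-i}^{(m)}$ is odd, which matches $p(T_{i,j}^{(m)})=p(i)+p(j)$. It then remains only to verify the quadratic relations (\ref{equY4}) for the images, after first rewriting (\ref{equY4}), by means of (\ref{equY5}), in terms of the $T_{a,b}^{(m)}$ with $b>0$.

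For that verification I would package Sergeev's elements into generating series $e_{i,j}(u)=\delta_{i,j}+\sum_{m\geq 1}(-1)^m e_{i,j}^{(m)}u^{-m}$ and $f_{i,j}(u)=\sum_{m\geq 1}(-1)^m f_{i,j}^{(m)}u^{-m}$, so that $U$ carries the series $T_{i,j}(u)$ of (\ref{equY1}) to $e_{j,i}(u)$ for $i,j>0$ and to $f_{j,-i}(u)$ for $i<0$, $j>0$ (note the reversal of indices). The recursion (\ref{equG1}) is a transfer-matrix-type rule expressing $e_{i,j}(u)$ and $f_{i,j}(u)$ inductively from the degree-one data $e_{i,j},f_{i,j}$, while (\ref{equG2}) records the adjoint action of $Q(n)$ on them. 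From (\ref{equG1}) and (\ref{equG2}) one obtains, by induction on $m$, the brackets $[e_{i,j},e_{k,l}^{(m)}]$, $[f_{i,j},e_{k,l}^{(m)}]$ and their analogues, and then, by a second induction on $m+r$, the full relation (\ref{equY2})/(\ref{equY4}) for the series $e(u),f(u)$: the base case is immediate from (\ref{equG2}), and the inductive step uses the characteristic $T^{(m+1)}$--$T^{(m-1)}$ shift in (\ref{equY4}) to drop one degree via (\ref{equG1}). This is in essence the computation of A. Sergeev \cite{S} and M. Nazarov \cite{N} (see also \cite{NS}), so I would either invoke it or reproduce it; the genuinely new point is matching conventions, namely that the twist $(-1)^r$ in (\ref{homomorphism}) together with the alternating signs $(-1)^{m+1}$ in (\ref{equG1})--(\ref{equG2}) conspire so that the twisted series above satisfy the $Y(Q(n))$-relations exactly as written.

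The step I expect to be the main obstacle is precisely this sign and parity bookkeeping. Relations (\ref{equY4}) carry the Koszul signs $(-1)^{p(i)p(k)+p(i)p(l)+p(k)p(l)}$ and $(-1)^{p(k)+p(l)}$, and on their right-hand side they involve the reflected generators $T_{-k,j}$ and $T_{k,-j}$; one must check that, after applying $U$, these reflected terms are produced exactly by the $(-1)^{m+1}$-weighted "$f$-part" of the recursion (\ref{equG1}) and with the correct overall sign, and likewise that (\ref{equY5}) is compatible with the passage from $e_{i,j}(u)$ to $f_{i,j}(u)$ under reflection of an index. Once (\ref{equY4}) and (\ref{equY5}) are confirmed for the images, the presentation of $Y(Q(n))$ yields the homomorphism $U$ immediately, and its image is the subalgebra of $U(Q(n))$ generated by all $e_{i,j}^{(m)}$ and $f_{i,j}^{(m)}$, which is the object used throughout the rest of the paper.
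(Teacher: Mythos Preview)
Your plan is correct in principle: direct verification of (\ref{equY4}) and (\ref{equY5}) for the images would establish the lemma, and you rightly identify that the core computation is the one in \cite{NS}. However, the paper's proof takes a shorter route that avoids the sign bookkeeping you flag as the main obstacle. Rather than verifying the quadratic relations for the twisted assignment $T_{i,j}^{(r)}\mapsto(-1)^r e_{j,i}^{(r)}$ directly, the paper factors $U$ as a composition $U=\omega\circ\beta$. Here $\omega:Y(Q(n))\to U(Q(n))$ is the \emph{anti}-homomorphism $T_{i,j}^{(r)}\mapsto e_{j,i}^{(r)}$, $T_{-i,j}^{(r)}\mapsto f_{j,i}^{(r)}$ (for $i,j>0$), whose existence is exactly \cite[Proposition~1.6]{NS}; and $\beta:Y(Q(n))\to Y(Q(n))$ is the anti-automorphism $T_{i,j}^{(r)}\mapsto(-1)^r T_{i,j}^{(r)}$, i.e.\ $T_{i,j}(u)\mapsto T_{i,j}(-u)$, which one checks preserves (\ref{equY2}) by the symmetry $u\leftrightarrow -v$, $i\leftrightarrow k$, $j\leftrightarrow l$. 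The composition of two anti-homomorphisms is a homomorphism, so $U$ is one.

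What each approach buys: yours is self-contained and makes explicit why the $(-1)^r$ twist and the $(-1)^{m+1}$ signs in (\ref{equG1})--(\ref{equG2}) are compatible, at the cost of a lengthy induction on $m+r$. The paper's argument isolates the twist as an automorphism of the source, so the only nontrivial input is the cited result from \cite{NS}; the remaining check (that $\beta$ is an anti-automorphism) is a one-line symmetry argument on (\ref{equY2}). If you were to revise, the cleanest path is to adopt the factorization $U=\omega\circ\beta$ and verify $\beta$ directly, rather than redoing the \cite{NS} computation with extra signs.
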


\begin{proof}
It follows from \cite{NS} (Proposition 1.6) that one can define an anti-homomorphism
$\omega: Y(Q(n))\to U(Q(n))$ by
\begin{align}\label{homomorphism}
&T_{i,j}^{(r)}\mapsto e_{j,i}^{(r)}, \hbox{ if } i>0, j>0, r>0,\nonumber\\
&T_{i,j}^{(r)}\mapsto f_{j,-i}^{(r)}, \hbox{ if } i<0, j>0, r>0,
T_{i,j}^{(0)}\mapsto \delta_{i, j}.
\end{align}
On the other hand, there exists an  anti-automorphism $\beta$ of $Y(Q(n))$
defined as follows:
\begin{equation}\label{antiQ}
\beta: T_{i,j}^{(r)}\longrightarrow (-1)^r T_{i,j}^{(r)}.
\end{equation}
One can easily verify that $\beta$ preserves the relation (\ref{equY2}).
In fact, according to (\ref{equY3}) and (\ref{equY5})
$\beta(T_{i,j}(u)) = T_{i,j}(-u)$. One can apply $\beta$ to equation  (\ref{equY2})
and identify: $u\leftrightarrow -v, v\leftrightarrow -u, i\leftrightarrow k, j\leftrightarrow l$.
Hence $U := \omega\circ \beta$ is a homomorphism.
\end{proof}
Finally, observe that the maps

\begin{align}
&\Delta_{l}: Y(Q(n))\longrightarrow Y(Q(n))^{\otimes l},\nonumber\\
&\Delta_{l}^{op}: Y(Q(n))\longrightarrow Y(Q(n))^{\otimes l},
\nonumber
\end{align}
where
\begin{align}
&\Delta_l := \Delta_{l-1, l}\circ \cdots\circ\Delta_{2, 3}\circ\Delta,\nonumber\\
&\Delta_l^{op} := \Delta_{l-1, l}^{op}\circ \cdots\circ\Delta_{2, 3}^{op}\circ\Delta^{op}
\nonumber
\end{align}
are homomorphisms of associative algebras.

\noindent
We will  prove the following statement.

\noindent
\begin{theorem}\label{T1}
Let $e$ be an even nilpotent element in $Q(n)$ whose Jordan blocks are each of size $l$.
Then the finite $W$-algebra for $Q(n)$
is isomorphic to the image of $Y(Q({n\over l}))$ under the homomorphism
$$U^{\otimes l}\circ \Delta_l^{op}: Y(Q({n\over l}))\longrightarrow
(U(Q({n\over l})))^{\otimes l}.$$
\end{theorem}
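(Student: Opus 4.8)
The plan is to combine Theorem~\ref{maint} (the generation criterion from \cite{PS2}) with an explicit construction of Whittaker-invariant lifts of a basis of $\g^\chi$ coming from the super-Yangian. First I would observe that since the Dynkin $\Z$-grading here is even, $(\g_{-1})_{\bar 1}=0$ is of even dimension, so by Theorem~\ref{maint}(a) it suffices to exhibit, for each basis vector $y_i$ of $\g^\chi$ homogeneous in the good grading, an element $Y_i\in W_\chi$ with $P(Y_i)=y_i$; these $Y_i$ then generate $W_\chi$. The candidate source for the $Y_i$ is the image $\A:=U^{\otimes l}\circ\Delta_l^{op}\bigl(Y(Q(\tfrac nl))\bigr)\subseteq U(Q(\tfrac nl))^{\otimes l}$. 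The first genuine step is therefore to set up an identification of $U(Q(\tfrac nl))^{\otimes l}$ with (a subalgebra of) $U(\p)$: the parabolic $\p=\g_0\oplus\n$ has Levi $\g_0\cong Q(\tfrac nl)^{\oplus l}$ (the $l$ diagonal $\tfrac nl\times\tfrac nl$ blocks, in the notation $e_{l(p-1)+i,\,l(q-1)+i}$ for fixed $i=1,\dots,l$), so $U(\g_0)\cong U(Q(\tfrac nl))^{\otimes l}$, and I would view $\A$ inside $U(\g_0)\subseteq U(\p)\cong U(\g)/I_\chi$ via this isomorphism.

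Next I would produce the distinguished generators of $\A$ explicitly. Applying $\Delta_l^{op}$ to the series $T_{i,j}(u)$ and then $U^{\otimes l}$ using \eqref{opp}, \eqref{equG1} and Lemma~\ref{LemU}, one gets, for $r=1,\dots,l$, elements
\begin{equation}\label{ppgen}
\Theta_{a,b}^{(r)}:=\bigl(U^{\otimes l}\circ\Delta_l^{op}\bigr)\bigl(T_{a,b}^{(r)}\bigr),\qquad
\Phi_{a,b}^{(r)}:=\bigl(U^{\otimes l}\circ\Delta_l^{op}\bigr)\bigl(T_{-a,b}^{(r)}\bigr),
\nonumber
\end{equation}
with $a,b=1,\dots,\tfrac nl$, which by Nazarov's theorem generate $\A$ (generators $T^{(r)}_{i,j}$ with $r>l$ map to $0$, because each of the $l$ tensor factors kills $T^{(\ge 2)}$ under $ev$—wait, under $U$ the factors do \emph{not} vanish, but the relevant truncation is forced by the coproduct having only $l$ factors, so $\Delta_l^{op}(T^{(r)})$ for $r\le l$ already exhausts the image; this is exactly the "quotient of the super-Yangian" phenomenon and will be pinned down precisely). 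The key computational claim is then: each $\Theta_{a,b}^{(r)}$ and $\Phi_{a,b}^{(r)}$ lies in $W_\chi$, i.e. satisfies $\mathrm{ad}(x)\,\Theta\in I_\chi$ for all $x\in\m$; and moreover the top Kazhdan-graded/highest-weight component $P$ of the family $\{\Theta_{a,b}^{(l)},\Phi_{a,b}^{(l)}\}$ (the $r=l$ layer) recovers precisely the basis of $\g^\chi$ displayed above, namely $\sum_{i=1}^{l}e_{l(p-1)+i,\,l(q-1)+i}$ and $\sum_{i=1}^{l}(-1)^{i-1}f_{l(p-1)+i,\,l(q-1)+i}$ (the $k=0$ block), with the $0<k\le l-1$ blocks of $\g^\chi$ coming from the lower $r<l$ layers. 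Granting this, Theorem~\ref{maint}(a) gives that $\A\supseteq$ a generating set of $W_\chi$, and comparing Hilbert series via Theorem~\ref{maint}(b)—$Gr_K W_\chi\cong S(\g^\chi)$, whose Poincaré series matches that of $\A$ once the surjection $Y(Q(\tfrac nl))\twoheadrightarrow\A$ is shown to have associated graded $S(\g^\chi)$—forces $\A=W_\chi$.

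The main obstacle, as usual for this type of theorem, is the two-part "key computational claim": (i) verifying the Whittaker invariance $\mathrm{ad}(\m)\,\Theta^{(r)}_{a,b}\subseteq I_\chi$, and (ii) computing the symbols $P(\Theta^{(l)}_{a,b})$, $P(\Phi^{(l)}_{a,b})$ exactly. For (i) I would argue inductively on $r$: the Sergeev relations \eqref{equG2} control $[\,e_{k,l},\,e^{(m)}_{\cdot,\cdot}\,]$ and $[\,f_{k,l},\,e^{(m)}_{\cdot,\cdot}\,]$, and after transporting through $\Delta_l^{op}$ the generators of $\m$ (which are $e_{l(p-1)+i+1,l(q-1)+i}$, $f_{l(p-1)+i+1,l(q-1)+i}$, sitting in off-diagonal blocks of $\g_0^{\oplus}$—strictly, in $\n$... no: $\m\subset\bigoplus_{j\le -2}\g_{2j}$, so $\m$ is \emph{not} in $\p$ and one must genuinely use the projection $pr\colon U(\g)\to U(\p)$ and the relations \eqref{equA1} for $\chi$ on $\m$) act on the $\Theta$'s by a telescoping sum that collapses into $I_\chi$; this is the Brundan–Kleshchev / Peng–style bookkeeping and is where all the work lives. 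For (ii) one uses that in the Kazhdan grading $e_{l(p-1)+i,l(q-1)+i+k}$ has degree controlled by $h$, the leading term of $e^{(m)}_{j,i}$ under \eqref{equG1} is a product whose top piece, after the $\Delta_l^{op}$-coproduct splits indices across the $l$ tensor factors and one keeps only the "straight-line" term $i\to i+1\to\cdots$, is exactly the claimed element of $\g^\chi$, all lower terms having strictly smaller Kazhdan degree and hence being absorbed. Once both are in hand the proof is complete by the dimension/Hilbert-series comparison above; I would record the precise quotient of $Y(Q(\tfrac nl))$ and the injectivity questions as Corollary~\ref{corol} and the $Gr_K$ statement as Corollary~\ref{cor}, exactly as the introduction anticipates.
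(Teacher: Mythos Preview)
Your overall architecture is right---use Theorem~\ref{maint} to reduce to exhibiting lifts of a basis of $\g^\chi$, and compare with the image $\A$ of the super-Yangian---but there is a genuine gap at the point you call ``key computational claim (i)''. You view $\A\subset U(\g_0)\subset U(\p)$ and then assert that each $\Theta^{(r)}_{a,b},\Phi^{(r)}_{a,b}\in\A$ lies in $W_\chi$. That is not how $W_\chi$ sits inside $U(\p)$: $W_\chi$ is \emph{not} a subalgebra of $U(\g_0)$. The injective Harish-Chandra map $\vartheta:W_\chi\to U(\g_0)$ is a projection along $U(\p)^+$, and for $k>0$ the lifts $Y\in W_\chi$ with $P(Y)=\sum_i e_{l(p-1)+i,l(q-1)+i+k}\in\g_{2k}$ necessarily involve terms of positive weight in $U(\p)$, hence cannot lie in $U(\g_0)$. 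In particular, a polynomial in the $x^i_{p,q},\xi^i_{p,q}$ alone will almost never satisfy $\mathrm{ad}(a)Y\in I_\chi$ for all $a\in\m$; your proposed inductive/telescoping argument for (i) cannot close because the commutators with $\m$ produce factors of $\m$ sitting in the \emph{middle} of monomials, and $I_\chi$ is only a left ideal.

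What the paper actually does is construct the lifts in $U(\p)$ directly, not via $\A$. It takes Sergeev's elements $e^{(m)}_{i,j},f^{(m)}_{i,j}\in U(Q(n))$, sets $Y=\pi\bigl(e^{(l+k-1)}_{lp,\,l(q-1)+1}\bigr)$ (and its odd analogue), and proves by a chain of lemmas that $P(Y)$ equals the desired basis vector of $\g^\chi$; this gives Theorem~\ref{T2} and, via Theorem~\ref{maint}, the generation of $W_\chi$. Only \emph{afterwards} does one compute the Harish-Chandra images $\vartheta(Y)\in U(\g_0)$ (Theorem~\ref{T3}) and, separately, the explicit formula for $U^{\otimes l}\circ\Delta_l^{op}(T^{(r)}_{q,p})$ (Theorem~\ref{T4}); the two computations match on the nose, and injectivity of $\vartheta$ then yields the surjection $Y(Q(\tfrac nl))\twoheadrightarrow W_\chi$ (Corollary~\ref{corol}), hence Theorem~\ref{T1}. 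So the missing ingredient in your plan is precisely the construction of the Sergeev-type elements in the \emph{big} algebra $U(Q(n))$ as preimages under $\vartheta$ of your $\Theta,\Phi$. (Two smaller points: your truncation remark is off---under $U$ the $T^{(r)}$ do not vanish for $r>1$, and $U^{\otimes l}\circ\Delta_l^{op}(T^{(r)})$ is nonzero for $r>l$, it is just redundant; and the bookkeeping between $r$ and $k$ is inverted: $T^{(r)}$ corresponds to the degree-$2(r-1)$ piece of $\g^\chi$, so $r=1$ gives $k=0$ and $r=l$ gives $k=l-1$, not the other way around.)
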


\begin{remark}\label{Rem1}
In \cite{PS2} we proved this theorem in the case when $e$ is a regular even nilpotent element, i.e.
$l = n$.
\end{remark}

\section{Generators of $W_\chi$ for the queer Lie superalgebra $Q(n)$}

In this section we construct some generators of $W_\chi$. In particular, we will prove that $W_\chi$ is finitely generated.
We use the elements $e_{i,j}^{(m)}$ and $f_{i,j}^{(m)}$ defined in (\ref{equG1}).

\noindent
\begin{theorem}\label{T2}
$\pi(e_{lp,l(q-1)+1}^{(l+k-1)})$ and $\pi(f_{lp,l(q-1)+1}^{(l+k-1)})$
 for $p, q = 1, \ldots, {n\over l}$ and $k = 1, \ldots, l$ generate
$W_\chi$.
\end{theorem}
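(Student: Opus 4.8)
The plan is to apply Theorem~\ref{maint} (Proposition 2.7 of \cite{PS2}): it suffices to exhibit, for each element of a homogeneous basis of $\g^\chi=\g^E$, an element of $W_\chi$ whose highest Kazhdan component equals that basis vector. Recall $\g^E$ has, for each $k=0,1,\dots,l-1$ and each pair $1\le p,q\le n/l$, the two vectors $\sum_{i=1}^{l-k}e_{l(p-1)+i,l(q-1)+i+k}$ and $\sum_{i=1}^{l-k}(-1)^{i+k-1}f_{l(p-1)+i,l(q-1)+i+k}$, sitting in $\g_{2k}$; these are $(n^2/l\,|\,n^2/l)$ vectors in total. So the whole proof reduces to showing that $Y_{p,q}^{(k)}:=\pi(e_{lp,l(q-1)+1}^{(l+k-1)})$ and $Z_{p,q}^{(k)}:=\pi(f_{lp,l(q-1)+1}^{(l+k-1)})$ (1) lie in $W_\chi$, and (2) have the right highest-weight components $P(Y_{p,q}^{(k)})$ and $P(Z_{p,q}^{(k)})$. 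Since $\dim(\g_{-1})_{\bar 1}=0$ here (the grading is even), part (a) of Theorem~\ref{maint} then gives that these $2(n^2/l)\cdot l/ \dots$ — more precisely, since $k$ runs over $l$ values — exactly $2(n/l)^2 l = 2n^2/l$ elements generate $W_\chi$, matching $\dim\g^\chi$.

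First I would establish membership in $W_\chi$. Using the description \eqref{walg}, $\pi(y)\in W_\chi$ iff $[\m,y]\subset I_\chi$; since $\m$ is generated by the root vectors $e_{l(p-1)+i+1,l(q-1)+i}$ and $f_{l(p-1)+i+1,l(q-1)+i}$ ($i=1,\dots,l-1$), it is enough to commute the Sergeev elements $e_{lp,l(q-1)+1}^{(m)}$, $f_{lp,l(q-1)+1}^{(m)}$ past these generators modulo $I_\chi$, using the bracket formulas \eqref{equG2}. The key point is that $[e_{a,b},e_{k,l}^{(m)}]=\delta_{b,k}e_{a,l}^{(m)}-\delta_{a,l}e_{k,b}^{(m)}$ (and the three companion formulas), so commuting a lower-triangular generator of $\m$ into $e_{lp,l(q-1)+1}^{(m)}$ again produces a single Sergeev element $e_{\ast,\ast}^{(m)}$ or $f_{\ast,\ast}^{(m)}$; one checks that the indices produced either force a Kronecker delta that vanishes or land on a generator of $\m$ whose $\chi$-value is recorded in \eqref{equA1}, so that the result lies in $I_\chi$ after subtracting the scalar $\chi$-correction. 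This is the computational heart and needs the precise indexing (the left index $lp$ is the top of the $p$-th Jordan block, the right index $l(q-1)+1$ is the bottom of the $q$-th block), together with induction on $m$ via the recursion \eqref{equG1}; the $(-1)^{m+1}$ signs there are exactly what makes the alternating-sign pattern in $\g^E$ appear.

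Second, I would compute the highest Kazhdan component. Assign, as in the Kazhdan-filtration definition, degree $j+2$ to $\g_j$; then $e_{a,b}$ with $a$ in block-position-row and $b$ in block-position-column contributing a $\Z$-grading shift $2k$ has Kazhdan degree $2k+2$. Expanding $e_{lp,l(q-1)+1}^{(m)}$ via \eqref{equG1} as a sum of monomials in the $e_{a,b}$'s and $f_{a,b}$'s, each length-$r$ monomial $\prod e_{a_i,a_{i+1}}$ (or with one $f$) telescopes along a path from row $lp$ down to column $l(q-1)+1$; I would show the Kazhdan-top monomials are exactly the length-one terms, i.e. $e_{lp,l(q-1)+1}^{(l+k-1)}$ has $P$-component a linear combination of $e_{l(p-1)+i, l(q-1)+i+k}$'s (resp.\ $f$'s for the other family), and that passing to $Gr_K(U(\g)/I_\chi)\cong S(\p\oplus\l')$ kills all monomials of Kazhdan degree exceeding $2k+2$ — the subtlety being to track which intermediate indices survive modulo $I_\chi$ and modulo the good grading. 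Matching coefficients, one verifies $P(Y_{p,q}^{(k)})=\sum_{i=1}^{l-k}e_{l(p-1)+i,l(q-1)+i+k}$ up to a nonzero scalar, and similarly the alternating-sign $f$-sum for $P(Z_{p,q}^{(k)})$; rescaling if necessary we get exactly the basis of $\g^\chi$. With all $2n^2/l$ highest components realized, Theorem~\ref{maint}(a) yields that the $\pi(e_{lp,l(q-1)+1}^{(l+k-1)})$ and $\pi(f_{lp,l(q-1)+1}^{(l+k-1)})$ generate $W_\chi$, which also shows $W_\chi$ is finitely generated.

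The main obstacle I anticipate is the bookkeeping in the two index-chasing arguments: keeping straight the translation between the "multi-block" matrix indices $l(p-1)+i$ and the $\g_{2k}$-grading, and verifying that the Sergeev recursion \eqref{equG1} together with \eqref{equG2} really does push all the non-top terms into $I_\chi$ with the correct signs. The signs $(-1)^{m+1}$ in \eqref{equG1}, combined with $m=l+k-1$, are delicate but are precisely engineered to reproduce the $(-1)^{i+k-1}$ pattern in $\g^E$, so the computation should close; it is essentially the $Q(n)$-analogue of the Brundan--Kleshchev realization for $\g\l$, and the regular case $l=n$ already done in \cite{PS2} (Remark~\ref{Rem1}) provides a template to follow.
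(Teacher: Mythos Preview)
Your strategy is correct and matches the paper's: reduce to Theorem~\ref{maint} by exhibiting the right highest Kazhdan components. Two remarks on the execution.

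First, your membership step is easier than you make it sound. By \eqref{equG2}, for any $a=e_{l(p'-1)+i+k,\,l(q'-1)+i}\in\m$ (with $k\ge 1$, $1\le i\le l-k$) one has
\[
[a,\,e_{lp,l(q-1)+1}^{(m)}]=\delta_{l(q'-1)+i,\,lp}\,e_{\ast}^{(m)}-\delta_{l(p'-1)+i+k,\,l(q-1)+1}\,e_{\ast}^{(m)},
\]
and both Kronecker deltas vanish \emph{identically}: the indices $lp$ and $l(q-1)+1$ are respectively the last and first positions in their Jordan blocks, so neither equality $l(q'-1)+i=lp$ nor $l(p'-1)+i+k=l(q-1)+1$ can hold for $1\le i\le l-1$. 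The same happens for the $f$-generators of $\m$ and for $f_{lp,l(q-1)+1}^{(m)}$. So the commutator is zero in $U(\g)$, not merely in $I_\chi$; there is no ``$\chi$-value'' correction to track. (The paper leaves this step implicit.)

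Second, for the $P$-computation the paper organizes the ``bookkeeping'' you anticipate into three preparatory lemmas rather than a single monomial expansion: (i) a vanishing result $\pi(e_{l(p-1)+m,l(q-1)+1}^{(r)})=\delta_{p,q}$ or $0$ whenever $m>r$ (Lemma~\ref{Lem1}); (ii) an explicit formula for the diagonal case $m=r$ (Lemma~\ref{Lem2}); and then (iii) a double induction on $k$ and $m$ establishing the more general identity
\[
P\bigl(\pi(e_{l(p-1)+m,l(q-1)+1}^{(m+k)})\bigr)=\sum_{i=1}^{\min(m,l-k)}e_{l(p-1)+i,l(q-1)+i+k}
\]
(Lemma~\ref{Lem3}), whose $m=l$ case is what you want. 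The scalar comes out to be exactly $1$, so no rescaling is needed. Your telescoping-paths picture would amount to the same induction once unrolled, but the inductive organization is cleaner and is what makes the sign pattern $(-1)^{i+k-1}$ in the $f$-case transparent.
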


\noindent
\begin{lemma}\label{Lem1}
Let $1\leq r\leq l-1$. Then
\begin{align}\label{equG3}
&\pi(e_{l(p-1)+m, l(q-1)+1}^{(r)})=\left\{ \begin{array}{cc}
 \delta_{p,q} &   if \quad  m = r+1,\\
 0 &  if \quad  r+2\leq m \leq l,\\
\end{array}\right.\\
&\pi(f_{l(p-1)+m,l(q-1)+1}^{(r)})=0, \hbox{ if } r+1\leq m \leq l.
\nonumber
\end{align}
\end{lemma}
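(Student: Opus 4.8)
The plan is to prove the two families of formulas by a simultaneous induction on $r$, using Sergeev's recursion (\ref{equG1}) together with the explicit description of $\chi$ on $\m$ recorded in (\ref{equA1}) and the fact that $\pi$ kills $I_\chi$, i.e. $\pi(x) = \chi(x)$ for $x \in \m$ and more generally $\pi(ax) = \chi(a)\pi(x)$ for $a \in \m$. First I would record the base case $r = 0$: here $e_{l(p-1)+m,l(q-1)+1}^{(0)} = \delta_{l(p-1)+m,\,l(q-1)+1}$, which is $\delta_{p,q}$ when $m = 1$ and $0$ when $2 \le m \le l$; and $f^{(0)} = 0$ identically. This already matches the claimed pattern if one reads it at $r = 0$ (the "$m = r+1$" case being $m = 1$). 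The base case $r = 1$ is then just (\ref{equA1}) itself: $\pi(e_{l(p-1)+m,l(q-1)+1}^{(1)}) = \pi(e_{l(p-1)+m,l(q-1)+1}) = \chi(e_{l(p-1)+m,l(q-1)+1})$, which equals $\delta_{p,q}$ for $m = 2$ and $0$ for $3 \le m \le l$, and similarly $\pi(f^{(1)}) = \chi(f) = 0$ for $m \ge 2$; note all the relevant matrix units lie in $\m = \bigoplus_{j\ge 1}\g_{-2j}$ precisely because $m \ge 2$, so applying $\chi$ is legitimate.

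For the inductive step, suppose $2 \le r \le l-1$ and the statement holds for $r-1$. Using (\ref{equG1}),
\begin{align*}
e_{l(p-1)+m,\,l(q-1)+1}^{(r)}
&= \sum_{k=1}^{n} e_{l(p-1)+m,\,k}\, e_{k,\,l(q-1)+1}^{(r-1)}
 + (-1)^{r+1}\sum_{k=1}^{n} f_{l(p-1)+m,\,k}\, f_{k,\,l(q-1)+1}^{(r-1)}.
\end{align*}
The idea is to push $\pi$ through and analyze each summand. Write $k = l(s-1)+m'$ with $1 \le m' \le l$. By the inductive hypothesis, $\pi(e_{k,\,l(q-1)+1}^{(r-1)})$ and $\pi(f_{k,\,l(q-1)+1}^{(r-1)})$ vanish unless $m' \le r$, and the only surviving contribution is $m' = r$, $s = q$, giving $\pi(e_{l(q-1)+r,\,l(q-1)+1}^{(r-1)}) = 1$ (and the $f$-term dies). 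So modulo lower-order bookkeeping, $\pi\big(e_{l(p-1)+m,\,l(q-1)+1}^{(r)}\big)$ equals $\pi\big(e_{l(p-1)+m,\,l(q-1)+r}\cdot \,(\text{something}\equiv 1)\big)$ up to commutator corrections. When $m \ge r+2$ the matrix unit $e_{l(p-1)+m,\,l(q-1)+r}$ lies strictly below the diagonal by $m - r \ge 2$ blocks, hence in $\bigoplus_{j \ge 2}\g_{-2j} \subset \m$, and $\chi$ annihilates it by (\ref{equA1}) — so that term contributes $0$. When $m = r+1$ the unit $e_{l(p-1)+r+1,\,l(q-1)+r}$ sits in $\g_{-2}$, with $\chi(e_{l(p-1)+r+1,\,l(q-1)+r}) = \delta_{p,q}$, producing the claimed $\delta_{p,q}$. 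The $f$-summands are handled identically but always vanish, since $\chi(f_{\cdot,\cdot}) = 0$ on all of $\m$.

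The main obstacle — and the step requiring genuine care rather than routine expansion — is making the "commutator corrections" precise: $e_{l(p-1)+m,\,k} e_{k,\,l(q-1)+1}^{(r-1)}$ is not literally $e^{(r-1)}$-times-a-scalar after applying $\pi$, because $\pi$ is only a linear projection $U(\g) \to U(\g)/I_\chi$, not an algebra map out of $U(\m)$. The clean way around this is to work with $\pi(a y) = \chi(a)\pi(y)$ only when $a \in \m$ stands on the \emph{left}, and to commute the relevant generator to the left, picking up $[\,\cdot,\cdot\,]$ terms that are shorter Sergeev elements of the same shape $e_{l(p'-1)+m',\,l(q-1)+1}^{(r')}$ with $r' < r$ and $m' $ in the forbidden range, to which the inductive hypothesis applies and which therefore vanish under $\pi$ — or, alternatively, to invoke the structural identities (\ref{equG2}) to rewrite $[\g_{\bar 0}\oplus\g_{\bar 1},\,e^{(m)}_{\cdot\cdot}\text{ or }f^{(m)}_{\cdot\cdot}]$ in closed form, so that each correction term is again manifestly of the controlled type. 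One must also check that all the matrix units $e_{l(p-1)+m,l(q-1)+1}^{(r)}$ with $m \ge r+1$ are genuinely represented, after projection, inside $U(\p)$ so that $\pi$ is the Harish-Chandra-type projection $pr$ of Section 3 — but this is automatic since $m \ge r+1 \ge 2$ forces the leading matrix unit into $\m$. Granting this combinatorial/commutator bookkeeping, the induction closes and both displayed formulas follow.
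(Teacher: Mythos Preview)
Your overall plan---induction on $r$, expand via the Sergeev recursion (\ref{equG1}), use the commutator identities (\ref{equG2}) to move matrix units past $e^{(r-1)}$ and $f^{(r-1)}$, and evaluate using (\ref{equA1})---is exactly the paper's argument. However, you have the sidedness of the ideal reversed, and this matters. Since $I_\chi$ is the \emph{left} ideal generated by $a-\chi(a)$ for $a\in\m$, the valid reduction rule is $\pi(ya)=\chi(a)\pi(y)$ when $a\in\m$ stands on the \emph{right}, not $\pi(ay)=\chi(a)\pi(y)$. In the recursion (\ref{equG1}) the single generator $e_{l(p-1)+m,k}$ (resp.\ $f_{l(p-1)+m,k}$) already sits on the left of $e_{k,l(q-1)+1}^{(r-1)}$, so for $k=l(s-1)+m'$ with $m'\le m-1$ (where this generator lies in $\m$) you must commute it to the \emph{right}, picking up $e_{l(p-1)+m,l(q-1)+1}^{(r-1)}$ from the $e$-commutator and $(-1)^{r}e_{l(p-1)+m,l(q-1)+1}^{(r-1)}$ from the $f$-commutator via (\ref{equG2}); with the prefactor $(-1)^{r+1}$ on the $f$-sum these cancel pairwise (or, equivalently, each already dies under $\pi$ by the induction hypothesis since $m\ge r+1\ge (r-1)+2$). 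After that, $\chi$ evaluated on the right gives exactly the single surviving term $\pi(e_{l(p-1)+m-1,l(q-1)+1}^{(r-1)})$, and the induction closes as you described.

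Two smaller points: your sentence ``the only surviving contribution is $m'=r$'' glosses over the terms with $m'\le r-1$, which are not covered by the induction hypothesis on $e^{(r-1)}$ directly; it is precisely for these that the commute-to-the-right step is needed. And your final remark about needing the expression to land in $U(\p)$ is unnecessary---$\pi$ is defined on all of $U(\g)$, and the identities hold in $U(\g)/I_\chi$ regardless.
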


\begin{proof}
We will prove the statement by induction on $r$. For $r = 1$ we have that
\begin{equation}
\pi(e_{l(p-1)+m,l(q-1) + 1}^{(1)})=
\pi(e_{l(p-1)+m,l(q-1)+1}),\quad
\pi(f_{l(p-1)+m, l(q-1)+1}^{(1)})=\pi(f_{l(p-1)+m, l(q-1)+1}).
\nonumber
\end{equation}
Then (\ref{equG3}) follows from  (\ref{equA1}).
Assume that (\ref{equG3}) holds for $r$.
From (\ref{equG1}) we have that
\begin{align}
&e_{l(p-1)+m, l(q-1)+1}^{(r+1)} =
\sum_{s =1}^{n\over l}\Big[
\sum_{k = 1}^l e_{l(p-1)+m,l(s-1)+k}e_{l(s-1)+k, l(q-1)+1}^{(r)} +
\nonumber\\
&(-1)^{r}\sum_{k = 1}^l f_{l(p-1)+m,l(s-1)+k}f_{l(s-1)+k, l(q-1)+1}^{(r)}\Big],
\nonumber\\
&f_{l(p-1)+m, l(q-1)+1}^{(r+1)} =
\sum_{s =1}^{n\over l}\Big[
\sum_{k = 1}^l e_{l(p-1)+m,l(s-1)+k}f_{l(s-1)+k,l(q-1)+1}^{(r)} +
\nonumber\\
&(-1)^{r}\sum_{k = 1}^l f_{l(p-1)+m, l(s-1)+k}e_{l(s-1)+k,l(q-1)+1}^{(r)}\Big].
\nonumber
\end{align}
Note that from (\ref{equG2})
\begin{align}
&[e_{l(p-1)+m,l(s-1)+k}, e_{l(s-1)+k,l(q-1)+1}^{(r)}] =  e_{l(p-1)+m,l(q-1)+1}^{(r)},\nonumber\\
&[e_{l(p-1)+m,l(s-1)+k}, f_{l(s-1)+k,l(q-1)+1}^{(r)}] =  f_{l(p-1)+m,l(q-1)+1}^{(r)},
\nonumber
\end{align}
\begin{align}
&[f_{l(p-1)+m,l(s-1)+k}, e_{l(s-1)+k,l(q-1)+1}^{(r)}] =
(-1)^{r+1}f_{l(p-1)+m,l(q-1)+1}^{(r)},\nonumber\\
&[f_{l(p-1)+m, l(s-1)+k}, f_{l(s-1)+k,l(q-1)+1}^{(r)}] =
(-1)^{r+1}e_{l(p-1)+m, l(q-1)+ 1}^{(r)}.
\nonumber
\end{align}
Hence
\begin{align}
& e_{l(p-1)+m,l(q-1)+1}^{(r+1)} =
\sum_{s =1}^{n\over l}\Big[
\sum_{k = 1}^{m-1}(e_{l(s-1)+k,l(q-1)+1}^{(r)}e_{l(p-1)+m,l(s-1)+k} +
e_{l(p-1)+m,l(q-1)+ 1}^{(r)}) + \nonumber\\
&\sum_{k = m}^{l}e_{l(p-1)+m,l(s-1)+k}e_{l(s-1)+k,l(q-1)+1}^{(r)} +
(-1)^{r}\Big(\sum_{k = 1}^{m-1}(-f_{l(s-1)+k,l(q-1)+1}^{(r)}f_{l(p-1)+m,l(s-1)+k} + \nonumber\\
&(-1)^{r+1}e_{l(p-1)+m,l(q-1)+1}^{(r)}) +
\sum_{k=m}^{l}f_{l(p-1)+m,l(s-1)+k}f_{l(s-1)+k,l(q-1)+1}^{(r)}\Big)\Big],
\nonumber
\end{align}

\begin{align}
&f_{l(p-1)+m,l(q-1)+1}^{(r+1)} =
\sum_{s =1}^{n\over l}\Big[
\sum_{k = 1}^{m-1}(f_{l(s-1)+k,l(q-1)+1}^{(r)}e_{l(p-1)+m,l(s-1)+k} +
f_{l(p-1)+m,l(q-1)+1}^{(r)}) +\nonumber\\
&\sum_{k = m}^{l}e_{l(p-1)+m,l(s-1)+k}f_{l(s-1)+k,l(q-1)+1}^{(r)} +
(-1)^{r}\Big(\sum_{k = 1}^{m-1}(e_{l(s-1)+k,l(q-1)+1}^{(r)}f_{l(p-1)+m,l(s-1)+k} + \nonumber\\
&(-1)^{r+1}f_{l(p-1)+m, l(q-1)+1}^{(r)}) +
\sum_{k = m}^{l}f_{l(p-1)+m,l(s-1)+k}e_{l(s-1)+k,l(q-1)+1}^{(r)}\Big)\Big].
\nonumber
\end{align}

\noindent
Then
\begin{align}
&\pi(e_{l(p-1)+m,l(q-1)+1}^{(r+1)}) =
\sum_{s =1}^{n\over l}\Big[
\sum_{k = 1}^{m-1}\pi(e_{l(s-1)+k,l(q-1)+1}^{(r)})\pi(e_{l(p-1)+m,l(s-1)+k})  + \nonumber\\
&\sum_{k = m}^{l}\Big(\pi(e_{l(p-1)+m,l(s-1)+k})\pi(e_{l(s-1)+k,l(q-1)+1}^{(r)}) +
(-1)^{r}\pi(f_{l(p-1)+m,l(s-1)+ k})\pi(f_{l(s-1)+ k,l(q-1)+1}^{(r)})\Big) + \nonumber\\
&(-1)^{r+1}\Big(\sum_{k = 1}^{m-1}\pi(f_{l(s-1)+ k,l(q-1)+1}^{(r)})
\pi(f_{l(p-1)+m,l(s-1)+k})\Big)\Big], \nonumber
\end{align}
\begin{align}
&\pi(f_{l(p-1)+m,l(q-1)+1}^{(r+1)}) =
\sum_{s =1}^{n\over l}\Big[
\sum_{k = 1}^{m-1}\pi(f_{l(s-1)+k,l(q-1)+1}^{(r)})\pi(e_{l(p-1)+m,l(s-1)+ k})  +\nonumber\\
&\sum_{k = m}^{l}\Big(\pi(e_{l(p-1)+m,l(s-1)+ k})\pi(f_{l(s-1)+ k,l(q-1)+1}^{(r)}) + (-1)^{r}\pi(f_{l(p-1)+m,l(s-1)+ k})\pi(e_{l(s-1)+ k,l(q-1)+1}^{(r)})\Big)+ \nonumber\\
&(-1)^{r}\Big(\sum_{k = 1}^{m-1}\pi(e_{l(s-1)+k,l(q-1)+1}^{(r)})\pi(f_{l(p-1)+m,l(s-1)+k})\Big)\Big].
\nonumber
\end{align}

\noindent
Then by (\ref{equA1})
\begin{align}
&\pi(e_{l(p-1)+m,l(q-1)+1}^{(r+1)}) = \pi(e_{l(p-1)+m-1,l(q-1)+1}^{(r)}) +
\sum_{s =1}^{n\over l}\Big[
\sum_{k = m}^{l}\Big(\pi(e_{l(p-1)+m,l(s-1)+k})
\pi(e_{l(s-1)+k,l(q-1)+1}^{(r)}) + \nonumber\\
&(-1)^{r}\pi(f_{l(p-1)+m,l(s-1)+k})\pi(f_{l(s-1)+k,l(q-1)+1}^{(r)})\Big)\Big],
\nonumber\\
&\pi(f_{l(p-1)+m,l(q-1)+1}^{(r+1)}) =
\pi(f_{l(p-1)+m-1,l(q-1)+1}^{(r)}) +
\sum_{s =1}^{n\over l}\Big[
\sum_{k = m}^{l}\Big(\pi(e_{l(p-1)+m,l(s-1)+k})
\pi(f_{l(s-1)+k,l(q-1)+1}^{(r)}) + \nonumber\\
&(-1)^{r}\pi(f_{l(p-1)+m,l(s-1)+k})\pi(e_{l(s-1)+k,l(q-1)+1}^{(r)})\Big)\Big].
\nonumber
\end{align}
Let $m\geq r+2$. Then by induction hypothesis,
\begin{equation}
\pi(e_{l(s-1)+k,l(q-1)+1}^{(r)}) = \pi(f_{l(s-1)+k,l(q-1)+1}^{(r)}) = 0
\hbox{ for }k = m, \ldots, l.
\nonumber
\end{equation}
If $m = r+2$, then
$\pi(e_{l(p-1)+m,l(q-1)+1}^{(r+1)}) =
\pi(e_{l(p-1)+r+1,l(q-1)+1}^{(r)}) = \delta_{p,q}$.
If $m \geq  r+3$, then $\pi(e_{l(p-1)+m,l(q-1)+1}^{(r+1)}) = \pi(e_{l(p-1)+m-1,l(q-1)+1}^{(r)}) = 0$.
Also, if $m\geq r+2$, then
$\pi(f_{l(p-1)+m,l(q-1)+1}^{(r+1)}) = \pi(f_{l(p-1)+m-1,l(q-1)+1}^{(r)}) = 0$.
Hence (\ref{equG3}) holds for $r+1$.
\end{proof}

\begin{lemma}\label{Lem2}
Let $1\leq m\leq l$, and $1\leq p, q, \leq {n\over l}$. Then
\begin{align}\label{equL1}
&\pi(e_{l(p-1) + m,l(q-1) + 1}^{(m)}) =
\sum_{k = 1}^{m}\pi(e_{l(p-1) + k, l(q-1)+ k}),\\
&\pi(f_{l(p-1)+m,l(q-1)+1}^{(m)}) =
\sum_{k = 1}^{m}(-1)^{k-1}\pi(f_{l(p-1)+k, l(q-1)+k}).
\nonumber
\end{align}
\end{lemma}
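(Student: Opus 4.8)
The plan is to prove both identities simultaneously by induction on $m$, using the recursive description of $\pi(e_{\ast,\ast}^{(m)})$ and $\pi(f_{\ast,\ast}^{(m)})$ that was worked out inside the proof of Lemma \ref{Lem1}. The base case $m=1$ is immediate: $\pi(e_{l(p-1)+1,l(q-1)+1}^{(1)})=\pi(e_{l(p-1)+1,l(q-1)+1})$ and $\pi(f_{l(p-1)+1,l(q-1)+1}^{(1)})=\pi(f_{l(p-1)+1,l(q-1)+1})$, which are exactly the $m=1$ right-hand sides. For the inductive step I would take the two boxed formulas at the end of the proof of Lemma \ref{Lem1}, namely
\begin{align}
&\pi(e_{l(p-1)+m,l(q-1)+1}^{(r+1)}) = \pi(e_{l(p-1)+m-1,l(q-1)+1}^{(r)}) + \sum_{s=1}^{n/l}\sum_{k=m}^{l}\Big(\pi(e_{l(p-1)+m,l(s-1)+k})\pi(e_{l(s-1)+k,l(q-1)+1}^{(r)}) + (-1)^r\pi(f_{l(p-1)+m,l(s-1)+k})\pi(f_{l(s-1)+k,l(q-1)+1}^{(r)})\Big),\nonumber\\
&\pi(f_{l(p-1)+m,l(q-1)+1}^{(r+1)}) = \pi(f_{l(p-1)+m-1,l(q-1)+1}^{(r)}) + \sum_{s=1}^{n/l}\sum_{k=m}^{l}\Big(\pi(e_{l(p-1)+m,l(s-1)+k})\pi(f_{l(s-1)+k,l(q-1)+1}^{(r)}) + (-1)^r\pi(f_{l(p-1)+m,l(s-1)+k})\pi(e_{l(s-1)+k,l(q-1)+1}^{(r)})\Big),\nonumber
\end{align}
and specialize them at $r+1=m$, i.e.\ $r=m-1$.

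The key observation is that in the double sum only the term $k=m$, $s=p$ survives once we apply Lemma \ref{Lem1} with exponent $r=m-1$: for $k\ge m=r+1$ we have $\pi(e_{l(s-1)+k,l(q-1)+1}^{(m-1)})=\delta_{s,q}$ only when $k=m=(m-1)+1$ and it vanishes for $k\ge m+1$, while $\pi(f_{l(s-1)+k,l(q-1)+1}^{(m-1)})=0$ for all $k\ge m$. Hence the $e$-recursion collapses to
$\pi(e_{l(p-1)+m,l(q-1)+1}^{(m)}) = \pi(e_{l(p-1)+m-1,l(q-1)+1}^{(m-1)}) + \delta_{p,q}\,\pi(e_{l(p-1)+m,l(q-1)+m})$,
and the $f$-recursion collapses to
$\pi(f_{l(p-1)+m,l(q-1)+1}^{(m)}) = \pi(f_{l(p-1)+m-1,l(q-1)+1}^{(m-1)}) + (-1)^{m-1}\delta_{p,q}\,\pi(f_{l(p-1)+m,l(q-1)+m})$,
where I have used $\chi(e_{l(p-1)+m,l(p-1)+m})$-type bookkeeping to identify $\pi(e_{l(p-1)+m,l(q-1)+m})$ with the diagonal generator $\pi(e_{l(p-1)+m,l(q-1)+m})$ (no reduction needed here since these lie in $\g_0\subset\p$). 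Plugging in the induction hypothesis for exponent $m-1$ then telescopes the sums: $\pi(e_{l(p-1)+m-1,l(q-1)+1}^{(m-1)})=\sum_{k=1}^{m-1}\pi(e_{l(p-1)+k,l(q-1)+k})$, and adding the new diagonal term at level $k=m$ gives $\sum_{k=1}^{m}\pi(e_{l(p-1)+k,l(q-1)+k})$, and likewise for $f$ with the sign $(-1)^{k-1}$ matching because the new term carries $(-1)^{m-1}$.

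I expect the main obstacle to be purely bookkeeping rather than conceptual: one must be careful that when $p\ne q$ the Kronecker delta kills the diagonal contribution and the two sides still agree (both inductive expressions then telescope down consistently, since the $m-1$ level already had the $\delta_{p,q}$ built in), and one must double-check the parity sign in the $f$-recursion, where the factor $(-1)^r=(-1)^{m-1}$ coming from the Sergeev recursion (\ref{equG1}) is exactly what converts the $(-1)^{k-1}$ pattern at level $m-1$ into the $(-1)^{k-1}$ pattern at level $m$. A secondary point is to confirm that all the elementary root vectors $e_{l(p-1)+m,l(s-1)+k}$ and $f_{l(p-1)+m,l(s-1)+k}$ appearing with $k\ge m$ either lie in $\p$ (so $\pi$ acts as identity modulo $I_\chi$) or, when $k>m$, that their product with a vanishing lower-level term is already zero, so no extra $I_\chi$-corrections appear; this is immediate from the grading conventions set up in Section 3, but should be stated explicitly.
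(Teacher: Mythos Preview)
Your overall strategy is exactly the paper's: induct on $m$, reuse the recursion derived in the proof of Lemma~\ref{Lem1}, and observe that Lemma~\ref{Lem1} kills all but one term in the double sum. However, there is a concrete slip in the bookkeeping that makes your collapsed recursion wrong when $p\ne q$.

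You write that ``only the term $k=m$, $s=p$ survives'' and then record the recursion with a factor $\delta_{p,q}$. But Lemma~\ref{Lem1}, applied with first index $l(s-1)+k$, gives $\pi(e_{l(s-1)+m,l(q-1)+1}^{(m-1)})=\delta_{s,q}$, so the surviving summand is $s=q$, not $s=p$. Summing over $s$ against $\delta_{s,q}$ yields
\[
\pi(e_{l(p-1)+m,l(q-1)+1}^{(m)}) = \pi(e_{l(p-1)+m-1,l(q-1)+1}^{(m-1)}) + \pi(e_{l(p-1)+m,l(q-1)+m}),
\]
with no $\delta_{p,q}$ at all (and similarly for $f$ with the sign $(-1)^{m-1}$). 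This is precisely what the paper obtains. With your $\delta_{p,q}$, the induction for $p\ne q$ would give $\sum_{k=1}^{m-1}\pi(e_{l(p-1)+k,l(q-1)+k})$ rather than $\sum_{k=1}^{m}$, contradicting the lemma, since the right-hand side is a sum of off-block-diagonal elements of $\g_0$ and is certainly not forced to vanish for $p\ne q$. Your remark that ``the $m-1$ level already had the $\delta_{p,q}$ built in'' is an attempt to patch this, but it is simply false: there is no such factor anywhere. Once you replace $\delta_{p,q}$ by the correct term, your argument is identical to the paper's.
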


\begin{proof}
We proceed by induction on $m$. If $m = 1$, then (\ref{equL1})  obviously holds.
Assume that (\ref{equL1}) holds for $m$.
By (\ref{equG1}) and (\ref{equG2})
we have that
\begin{align}
& e_{l(p-1)+m+1,l(q-1)+1}^{(m+1)} =
\nonumber\\
&\sum_{s =1}^{n\over l}\Big[
\sum_{k = 1}^{l}e_{l(p-1)+m+1,l(s-1)+k}e_{l(s-1)+k,l(q-1)+1}^{(m)}
+ (-1)^m \sum_{k = 1}^{l}f_{l(p-1)+m+1,l(s-1)+k}f_{l(s-1)+k, l(q-1)+1}^{(m)}\Big] = \nonumber\\
&\sum_{s =1}^{n\over l}\Big[
\sum_{k = 1}^{m}
\Big(e_{l(s-1)+k,l(q-1) + 1}^{(m)}e_{l(p-1) + m+1,l(s-1)+k} +
e_{l(p-1)+m+1, l(q-1)+1}^{(m)}\Big) +\nonumber\\
&\sum_{k = m+1}^{l}e_{l(p-1)+m+1,l(s-1)+k}e_{l(s-1)+k, l(q-1)+1}^{(m)} +
(-1)^{m}\Big(\sum_{k = 1}^{m}(-f_{l(s-1)+k, l(q-1) +1}^{(m)}
f_{l(p-1) + m+1,l(s-1)+k} + \nonumber\\
&(-1)^{m+1}e_{l(p-1)+m+1, l(q-1)+1}^{(m)}) +
\sum_{k = m+1}^{l}f_{l(p-1)+m+1,l(s-1)+k}f_{l(s-1)+k, l(q-1)+1}^{(m)}\Big)\Big].
\nonumber
\end{align}
We also have
\begin{align}
&f_{l(p-1)+m+1, l(q-1)+1}^{(m+1)} =\nonumber\\
&\sum_{s =1}^{n\over l}\Big[
\sum_{k = 1}^{l}e_{l(p-1)+m+1,l(s-1)+k}f_{l(s-1)+k, l(q-1)+1}^{(m)}
+ (-1)^m \sum_{k = 1}^{l}f_{l(p-1)+m+1,l(s-1)+k}e_{l(s-1)+k, l(q-1)+1}^{(m)}\big] =  \nonumber\\
&\sum_{s =1}^{n\over l}\Big[
\sum_{k = 1}^{m}\Big(f_{l(s-1)+k,l(q-1)+1}^{(m)}e_{l(p-1)+m+1,l(s-1)+k} + f_{l(p-1)+m+1,l(q-1)+1}^{(m)}\Big) +
\nonumber\\
&\sum_{k = m+1}^{l}e_{l(p-1)+m+1,l(s-1)+k}f_{l(s-1)+k,l(q-1)+1}^{(m)} +
(-1)^{m}\Big(\sum_{k = 1}^{m}\Big(e_{l(s-1)+k,l(q-1)+1}^{(m)}f_{l(p-1)+m+1,l(s-1)+k} + \nonumber\\
&(-1)^{m+1}f_{l(p-1)+m+1, l(q-1)+1}^{(m)}\Big) +
\sum_{k = m+1}^{l}f_{l(p-1)+m+1,l(s-1)+k}e_{l(s-1)+k, l(q-1)+1}^{(m)}\Big)\Big].
\nonumber
\end{align}
Then using (\ref{equA1}) and (\ref{equG3})
we obtain that
\begin{align}
&\pi(e_{l(p-1)+ m+1,l(q-1)+1}^{(m+1)}) =
\pi(e_{l(p-1)+m,l(q-1)+1}^{(m)}) + \pi(e_{l(p-1)+m+1, l(q-1)+m+1}),
 \nonumber\\
&\pi(f_{l(p-1) + m+1,l(q-1) + 1}^{(m+1)}) =
 \pi(f_{l(p-1)+m,l(q-1)+1}^{(m)}) + (-1)^m\pi(f_{l(p-1)+m+1,l(q-1)+m+1}).
 \nonumber
\end{align}
By induction hypothesis we have
\begin{align}
&\pi(e_{l(p-1)+m+1, l(q-1)+1}^{(m+1)}) =
\sum_{k = 1}^{m}\pi(e_{l(p-1)+k, l(q-1)+k}) + \pi(e_{l(p-1)+m+1, l(q-1)+m+1}) = \nonumber\\
&\sum_{k = 1}^{m+1}\pi(e_{l(p-1) + k, l(q-1)+ k}),\nonumber\\
&\pi(f_{l(p-1)+m+1, l(q-1)+1}^{(m+1)}) =
\sum_{k = 1}^{m}(-1)^{k-1}\pi(f_{l(p-1)+k, l(q-1)+k})
 + (-1)^m\pi(f_{l(p-1)+m+1, l(q-1)+m+1}) = \nonumber\\
&\sum_{k = 1}^{m+1}(-1)^{k-1}\pi(f_{l(p-1)+k, l(q-1)+k}).
 \nonumber
\end{align}
\end{proof}

\noindent
Consider the Kazhdan filtration on $U(\p)$. By definition, the graded algebra
$Gr_K U(\p)$ is isomorphic to $S(\p)$. Moreover, $Gr_K U(\p)\simeq S(\p)$ is a commutative graded ring, where
the grading is induced from the Dynkin $\Z$-grading of $\g$.
For any $X\in U(\p)$ let $Gr_K(X)$ denote the corresponding element in $Gr_K U(\p)$ and
$P(X)$ denote the highest weight component of $Gr_K(X)$ in the Dynkin $\Z$-grading.
For $X\in U(\p)$, we denote by $\hbox{deg}P(X)$ the Kazhdan degree of $Gr_K(X)$ and by $\hbox{wt}P(X)$ the weight of the highest weight component
of  $Gr_K(X)$.

\begin{lemma}\label{Lem3}
\begin{align}\label{M1}
&P\Big(\pi(e_{lp, l(q-1) +1}^{(l+k)})\Big) =
\sum_{i=1}^{l-k}e_{l(p-1)+i,l(q-1)+i+k},
\end{align}
\begin{align}\label{M2}
&P\Big(\pi(f_{lp, l(q-1) +1}^{(l+k)})\Big) =
\sum_{i=1}^{l-k}(-1)^{i+k-1}f_{l(p-1)+i,l(q-1)+i+k},\\
&\hbox{where } k = 0, 1, \ldots, l-1.
 \nonumber
\end{align}
\end{lemma}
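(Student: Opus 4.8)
The plan is to compute the highest-weight component $P$ of the Kazhdan-graded image of $\pi(e^{(l+k)}_{lp,l(q-1)+1})$ directly from the recursion \eqref{equG1}, exactly in the spirit of Lemmas \ref{Lem1} and \ref{Lem2} but now carried one step past the ``parabolic'' range $r\le l-1$. First I would recall the bookkeeping: under the Dynkin $\Z$-grading $e_{a,b}$ and $f_{a,b}$ lie in $\g_{2(\,\mathrm{col}-\mathrm{row}\,)}$ when $a,b$ are written as $l(p-1)+i$, so the Kazhdan degree of $\pi(e_{a,b})$ is $2(\mathrm{col}-\mathrm{row})+2$, and an iterated product $e_{i,k_1}e_{k_1,k_2}\cdots$ telescopes in weight. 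The key structural point, already visible in the displayed formulas in the proof of Lemma \ref{Lem2}, is that each application of \eqref{equG1} either (i) picks up a term $\pi(e^{(r)}_{l(p-1)+m-1,\,l(q-1)+1})$ coming from the commutator $\delta_{j,k}$-piece together with \eqref{equA1} — this is a \emph{lower} Kazhdan degree contribution once we are past the regime of Lemma \ref{Lem1} — or (ii) produces genuinely degree-$1$-in-$\g$-higher monomials $\pi(e_{l(p-1)+m,l(s-1)+k})\,\pi(e^{(r)}_{l(s-1)+k,l(q-1)+1})$ and the parallel $f f$ term. So I would set up a secondary induction, this time on $k$ (with $r=l+k$), feeding in Lemma \ref{Lem2} as the base case $m=l$, $r=l$, i.e. $\pi(e^{(l)}_{lp,l(q-1)+1})=\sum_{i=1}^{l}\pi(e_{l(p-1)+i,l(q-1)+i})$, whose highest-weight component is itself (it is already $\g_0$-homogeneous of Kazhdan degree $2$).

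The heart of the argument is the inductive step $k\mapsto k+1$. Applying \eqref{equG1} to $e^{(l+k+1)}_{lp,l(q-1)+1}$ and projecting by $\pi$, I would use \eqref{equA1} and Lemma \ref{Lem1} to kill all the terms in which the inner superscript-$(l+k)$ factor has been reduced to something of the form $\pi(e^{(r)}_{l(s-1)+j,l(q-1)+1})$ with $j>r+1$ (these vanish), and to identify the one surviving ``commutator'' term, which will contribute a piece of strictly smaller Kazhdan degree than $2(k+1)+2$ and hence \emph{not} affect $P$. The surviving top-degree contribution is $\sum_{s}\sum_{j}\big(\pi(e_{lp,l(s-1)+j})\,\pi(e^{(l+k)}_{l(s-1)+j,l(q-1)+1})+(-1)^{l+k}\pi(f_{lp,l(s-1)+j})\,\pi(f^{(l+k)}_{l(s-1)+j,l(q-1)+1})\big)$; plugging in the induction hypothesis for $P$ of the degree-$(l+k)$ factors and noting that only $j=l$ survives in the top-weight term (since $e_{lp,l(s-1)+j}$ must climb as high as possible to the right), one reads off $P\big(\pi(e^{(l+k+1)}_{lp,l(q-1)+1})\big)=\sum_{i=1}^{l-k-1}e_{l(p-1)+i,\,l(q-1)+i+k+1}$, using that $e_{lp,l(s-1)+l}$ times $e_{l(s-1)+i,l(q-1)+i+k}$ is nonzero only when $s=p$ and forces the shift to increase by one. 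The $f$ computation is identical up to tracking the sign $(-1)^{i+k-1}$ through the $(-1)^{l+k}$ and an index shift, which is where the precise alternating signs in \eqref{M2} come from; the cross terms $ef$ and $fe$ contribute to $P\big(\pi(f^{(l+k+1)})\big)$ rather than to the $e$-formula, and vice versa, so the two recursions close separately.

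The main obstacle I anticipate is \emph{not} the top-degree calculation — that is a routine telescoping once the weight bookkeeping is fixed — but rather verifying cleanly that every lower-order correction term genuinely has strictly smaller Kazhdan degree, so that $P$ (the highest-weight component of $Gr_K$) really is insensitive to them. In particular one must check that the ``$\pi(e_{l(p-1)+m-1,l(q-1)+1}^{(r)})$'' type terms, which dominated in Lemma \ref{Lem2} where $m\le l$, are subdominant here once $r\ge l$; this is where the hypothesis that all Jordan blocks have the common size $l$ is used, via \eqref{equA1}, to guarantee that $\chi$ vanishes on everything except the $\delta_{p,q}$ terms in the first slot. I would also need the commutativity of $Gr_K U(\p)\simeq S(\p)$ so that the order of the two factors in each surviving monomial is irrelevant, and the fact (recorded just before the lemma) that the Dynkin grading on $\g$ induces the grading on $S(\p)$ with respect to which ``highest weight component'' is taken. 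Granting those, the double induction goes through and yields \eqref{M1}–\eqref{M2}.
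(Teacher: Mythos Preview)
Your induction scheme does not close, and the identification of the top-weight contribution is inverted. When you apply the recursion \eqref{equG1} at $m=l$ to $e^{(l+k+1)}_{lp,\,l(q-1)+1}$ and push the $\m$-factors $e_{lp,\,l(s-1)+j}$ (for $j<l$) to the right, the surviving piece from \eqref{equA1} is $\pi\bigl(e^{(l+k)}_{l(p-1)+l-1,\,l(q-1)+1}\bigr)$; this has first subscript $l(p-1)+(l-1)$, not $lp$, so your hypothesis on $k$ alone (at $m=l$) says nothing about it. Your claim that this ``commutator'' term has strictly smaller Kazhdan degree is the crux of the error: writing its superscript as $(l-1)+(k+1)$, it has Kazhdan degree $2(k+1)+2$ and top Dynkin weight $2(k+1)$, \emph{exactly} the target values --- in fact at $m=l$ it is the \emph{sole} contributor to $P$. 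Conversely, the $j=l$ product term $\pi(e_{lp,ls})\cdot\pi\bigl(e^{(l+k)}_{ls,\,l(q-1)+1}\bigr)$, which you single out as dominant, has Kazhdan degree $2+(2k+2)=2k+4$ but Dynkin weight only $0+2k=2k$, one step too low: in $Gr_K U(\p)\simeq S(\p)$ the product of two basis vectors is a degree-two monomial, not a single matrix unit, so there is no ``shift by one'' coming from matrix multiplication.

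The paper fixes this by strengthening the statement to all $1\le m\le l$, proving
\[
P\bigl(\pi(e^{(m+k)}_{l(p-1)+m,\,l(q-1)+1})\bigr)=\sum_{i=1}^{\min(m,\,l-k)} e_{l(p-1)+i,\,l(q-1)+i+k}
\]
(and the analogue for $f$) by a double induction: outer on $k$, inner on $m$. The recursion at the pair $(m,k)$ then produces the $(m-1,k)$ term via the commutator, handled by the inner hypothesis, together with (when $m\le l-k$) the single element $e_{l(p-1)+m,\,l(q-1)+m+k}$ isolated via Lemma~\ref{Lem1}; these two pieces add to the claimed sum, while all remaining terms are shown, using the outer hypothesis for smaller $k$, to have strictly lower weight. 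Specializing $m=l$ yields \eqref{M1}--\eqref{M2}.
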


\begin{proof} We will prove a more general statement. We claim that for $0\leq k\leq l-1$ and $1\leq m\leq l$
\begin{align}\label{M3}
&P\Big(\pi(e_{l(p-1)+m, l(q-1) +1}^{(m+k)})\Big) =
\sum_{i=1}^{r}e_{l(p-1)+i,l(q-1)+i+k},
\end{align}
\begin{align}\label{M4}
&P\Big(\pi(f_{l(p-1)+m, l(q-1) +1}^{(m+k)})\Big) =
\sum_{i=1}^{r}(-1)^{i+k-1}f_{l(p-1)+i,l(q-1)+i+k},\\
&\hbox{where } r = \hbox{min}\lbrace m, l-k\rbrace.
 \nonumber
\end{align}
In particular,
\begin{align}
&\hbox{deg}P\Big(\pi(e_{l(p-1)+m, l(q-1) +1}^{(m+k)})\Big) =
\hbox{deg}P\Big(\pi(f_{l(p-1)+m, l(q-1) +1}^{(m+k)})\Big) = 2k+2,\nonumber\\
&\hbox{wt}P\Big(\pi(e_{l(p-1)+m, l(q-1) +1}^{(m+k)})\Big) =
\hbox{wt}P\Big(\pi(f_{l(p-1)+m, l(q-1) +1}^{(m+k)})\Big) = 2k.
 \nonumber
\end{align}

\noindent
We proceed to the proof of (\ref{M3}) by induction on $k$ and $m$.
Note that if $k = 0$, then (\ref{M3}) holds for any $1\leq m\leq l$ by (\ref{equL1}).
Assume that
if $k\leq b-1$, then (\ref{M3}) holds for any $1\leq m\leq l$.
Let $k = b$. Show that (\ref{M3}) holds for $m = 1$.
Note that by (\ref{equG1})
\begin{align}
&e_{l(p-1)+1, l(q-1) +1}^{(1+b)} =
\sum_{s =1}^{n\over l}
\Big[\Big(\sum_{a = 1}^l e_{l(p-1)+1,l(s-1)+a}e_{l(s-1)+a, l(q-1)+1}^{(b)}\Big)  +\nonumber\\
&(-1)^b\Big(\sum_{a = 1}^l f_{l(p-1)+1,l(s-1)+a}f_{l(s-1)+a, l(q-1)+1}^{(b)}\Big)\Big].
\nonumber
\end{align}

\noindent
Let $X = \pi(e_{l(p-1)+1,l(s-1)+a}e_{l(s-1)+a, l(q-1)+1}^{(b)})$,
$Y = \pi(f_{l(p-1)+1,l(s-1)+a}f_{l(s-1)+a, l(q-1)+1}^{(b)})$ where $a = 1, \ldots, b$.
Note that
\begin{align}
&\hbox{deg}P\Big(\pi(e_{l(p-1)+1,l(s-1)+a})\Big) =
\hbox{deg}P\Big(\pi(f_{l(p-1)+1,l(s-1)+a})\Big) = 2a,\nonumber\\
&\hbox{wt}P\Big(\pi(e_{l(p-1)+1,l(s-1)+a})\Big) = \hbox{wt}P\Big(\pi(f_{l(p-1)+1,l(s-1)+a})\Big) = 2a-2.
 \nonumber
\end{align}
By induction hypothesis,
\begin{align}
&\hbox{deg}P\Big(\pi(e_{l(s-1)+a, l(q-1)+1}^{(b)})\Big)
 = \hbox{deg}P\Big(\pi(f_{l(s-1)+a, l(q-1)+1}^{(b)})\Big)
  = 2(b-a)+2,\nonumber\\
&\hbox{wt}P\Big(\pi(e_{l(s-1)+a, l(q-1)+1}^{(b)})\Big) = \hbox{wt}P\Big(\pi(f_{l(s-1)+a, l(q-1)+1}^{(b)})\Big) = 2(b-a).
 \nonumber
\end{align}

\noindent
Then
\begin{align}
&\hbox{deg}P(X) = 2b+2, \quad \hbox{wt}P(X) = 2b-2,\nonumber\\
&\hbox{deg}P(Y) = 2b+2, \quad \hbox{wt}P(Y) = 2b-2.
\nonumber
\end{align}

\noindent
Let $X = \sum_{s =1}^{n\over l}\pi(e_{l(p-1)+1,l(s-1)+b+1}e_{l(s-1)+b+1, l(q-1)+1}^{(b)})$.
Then by (\ref{equG3}) $X  = \pi(e_{l(p-1)+1,l(q-1)+b+1}).$ Hence
$$\hbox{deg}P(X) = 2b+2, \quad \hbox{wt}P(X) = 2b.$$
Finally, by (\ref{equG3})
$$\pi(e_{l(p-1)+b+i, l(q-1)+1}^{(b)}) = 0\hbox{ for }i = 2, \ldots, l-b, \quad
\pi(f_{l(p-1)+b+i, l(q-1)+1}^{(b)}) = 0\hbox{ for }i = 1, \ldots, l-b.$$
Hence
$$P(\pi(e_{l(p-1)+1, l(q-1)+1}^{(1+b)})) = e_{l(p-1)+1, l(q-1)+b+1}.$$

\noindent
Let $k = b$ and assume that (\ref{M3}) holds for $m \leq c$.
Show that it holds for $m = c+1$. Note that (\ref{equG1})
\begin{align}
&e_{l(p-1)+c+1, l(q-1)+1}^{(c+1+b)} =
\sum_{s =1}^{n\over l}
\Big[\Big(
\sum_{a = 1}^{l}e_{l(p-1)+c+1,l(s-1)+a}e_{l(s-1)+a,l(q-1)+1}^{(c+b)}\Big) +\nonumber\\
&(-1)^{c+b}
\Big(\sum_{a = 1}^{l}f_{l(p-1)+c+1,l(s-1)+a}f_{l(s-1)+a,l(q-1)+1}^{(c+b)}\Big)
\Big].
\nonumber
\end{align}

\noindent
Thus
\begin{align}
&\pi(e_{l(p-1)+c+1, l(q-1)+1}^{(c+1+b)}) =
\sum_{s =1}^{n\over l}\Big[
\sum_{a = 1}^{c-1}\pi(e_{l(p-1)+c+1, l(s-1)+a}e_{l(s-1)+a,l(q-1)+1}^{(c+b)}) + \nonumber\\
&\pi(e_{l(p-1)+c+1,l(s-1)+c}e_{l(s-1)+c,l(q-1)+1}^{(c+b)})+
\sum_{i = 1}^{b}\pi(e_{l(p-1)+c+1,l(s-1)+c+i}e_{l(s-1)+c+i,l(q-1)+1}^{(c+b)}) +\nonumber\\
&\pi(e_{l(p-1)+c+1,l(s-1)+c+b+1}e_{l(s-1)+c+b+1,l(q-1)+1}^{(c+b)})+
\sum_{i = 2}^{l-c-b}\pi(e_{l(p-1)+c+1,l(s-1)+c+b+i}e_{l(s-1)+c+b+i,l(q-1)+1}^{(c+b)})+ \nonumber\\
&(-1)^{c+b}\Big(\sum_{a = 1}^{c}\pi(f_{l(p-1)+c+1,l(s-1)+a}f_{l(s-1)+a,l(q-1)+1}^{(c+b)}) +
\sum_{i = 1}^{b}\pi(f_{l(p-1)+c+1,l(s-1)+c+i}f_{l(s-1)+c+i,l(q-1)+1}^{(c+b)}) + \nonumber\\
&\sum_{i = 1}^{l-c-b}\pi(f_{l(p-1)+c+1,l(s-1)+c+b+i}f_{l(s-1)+c+b+i,l(q-1)+1}^{(c+b)})\Big)
\Big ].
\nonumber
\end{align}

\noindent
Let $X = \pi(e_{l(p-1)+c+1,l(s-1)+i}e_{l(s-1)+i,l(q-1)+1}^{(c+b)})$, where $i = 1, \ldots, c-1$, and

\noindent
$Y = \pi(f_{l(p-1)+c+1,l(s-1)+i}f_{l(s-1)+i,l(q-1)+1}^{(c+b)})$, where $i = 1, \ldots, c$.
 By (\ref{equG2}) and (\ref{equA1})
 \begin{align}
&X = \pi(e_{l(s-1)+i,l(q-1)+1}^{(c+b)}e_{l(p-1)+c+1,l(s-1)+i} +
e_{l(p-1)+c+1,l(q-1)+1}^{(c+b)}) =
\pi(e_{l(p-1)+c+1,l(q-1)+1}^{(c+b)}),\nonumber\\
&Y = \pi(-f_{l(s-1)+i,l(q-1)+1}^{(c+b)}f_{l(p-1)+c+1,l(s-1)+i} +
(-1)^{c+b+1}e_{l(p-1)+c+1,l(q-1)+1}^{(c+b)}) =\nonumber\\
&\pi((-1)^{c+b+1}e_{l(p-1)+c+1,l(q-1)+1}^{(c+b)}).
\nonumber
\end{align}

\noindent
By induction hypothesis
\begin{align}\label{E1}
&\hbox{deg}P(X) = \hbox{deg}P(Y) = 2b, \\
&\hbox{wt}P(X) = \hbox{wt}P(Y) = 2b-2.
\nonumber
\end{align}

\noindent
Let $X = \pi(e_{l(p-1)+c+1,l(s-1)+c}e_{l(s-1)+c,l(q-1)+1}^{(c+b)})$.
Then by (\ref{equG2}) and  (\ref{equA1})
\begin{align}
&X = \pi(e_{l(s-1)+c,l(q-1)+1}^{(c+b)}e_{l(p-1)+c+1,l(s-1)+c} + e_{l(p-1)+c+1,l(q-1)+1}^{(c+b)}) =\nonumber\\
&\pi(e_{l(p-1)+c,l(q-1)+1}^{(c+b)} + e_{l(p-1)+c+1,l(q-1)+1}^{(c+b)}).
\nonumber
\end{align}

\noindent
By induction hypothesis
\begin{align}\label{E2}
&\hbox{deg}P(\pi(e_{l(p-1)+c,l(q-1)+1}^{(c+b)})) = 2b+2, \\
&\hbox{wt}P(\pi(e_{l(p-1)+c,l(q-1)+1}^{(c+b)})) = 2b,
\nonumber
\end{align}

\begin{align}\label{E3}
&\hbox{deg}P(\pi(e_{l(p-1)+c+1,l(q-1)+1}^{(c+b)})) = 2b,\\
&\hbox{wt}P(\pi(e_{l(p-1)+c+1,l(q-1)+1}^{(c+b)})) = 2b-2.
\nonumber
\end{align}

\noindent
Let $X = \pi(e_{l(p-1)+c+1,l(s-1)+c+i}e_{l(s-1)+c+i,l(q-1)+1}^{(c+b)})$,
$Y = \pi(f_{l(p-1)+c+1,l(s-1)+c+i}f_{l(s-1)+c+i,l(q-1)+1}^{(c+b)})$
for $i = 1, \ldots, b$.
Then by induction hypothesis

\begin{align}\label{E4}
&\hbox{deg}P(X) = \hbox{deg}P(Y) =2b+2, \\
&\hbox{wt}P(X) = \hbox{wt}P(Y) = 2b-2.
\nonumber
\end{align}

\noindent
Let $X = \pi(e_{l(p-1)+c+1,l(s-1)+c+b+1}e_{l(s-1)+c+b+1,l(q-1)+1}^{(c+b)})$.
Hence by (\ref{equG3}) $X = \pi(e_{l(p-1)+c+1,l(q-1)+c+b+1})$. Then
\begin{align}\label{E5}
&\hbox{deg}P(X) = 2b+2, \\
\nonumber
&\hbox{wt}P(X) = 2b.
\end{align}
Finally, by (\ref{equG3})  $\pi(e_{l(p-1)+c+1, l(s-1)+c+b+i}e_{l(s-1)+c+b+i,l(q-1)+1}^{(c+b)}) = 0$ for $i = 2,\ldots, l-c-b$ and $\pi(f_{l(p-1)+c+1, l(s-1)+c+b+i}f_{l(s-1)+c+b+i,l(q-1)+1}^{(c+b)}) = 0$ for $i = 1,\ldots,l-c-b$.
From (\ref{E1})-(\ref{E5}) one can see that the highest degree component in
$\pi(e_{l(p-1)+c+1,l(q-1)+1}^{(c+1+b)})$ has degree $2b+2$,
and its highest weight component has weight $2b$. In fact, if $c\geq l-b$, then by (\ref{E2})
this component is  $P(\pi(e_{l(p-1)+c,l(q-1)+1}^{(c+b)}))$. By induction hypothesis
$P(\pi(e_{l(p-1)+c,l(q-1)+1}^{(c+b)})) = \sum_{i= 1}^{l-b}e_{l(p-1)+ i,l(q-1)+i+b}$.
If  $c < l-b$, then $P(\pi(e_{l(p-1)+c,l(q-1)+1}^{(c+b)})) = \sum_{i= 1}^{c}e_{l(p-1)+i,l(q-1)+i+b}$. Note that in this case
$\pi(e_{l(p-1)+c+1,l(q-1)+1}^{(c+1+b)})$ has
an additional element $\pi(e_{l(p-1)+c+1,l(q-1)+ c+b+1})$
of degree $2b + 2$ and weight $2b$ according to (\ref{E5}).
Clearly,  $P(\pi(e_{l(p-1)+c+1,l(q-1)+ c+b+1})) = e_{l(p-1)+c+1,l(q-1)+ c+b+1}$ and
$P(\pi(e_{l(p-1)+c,l(q-1)+1}^{(c+b)})) +  P(\pi(e_{l(p-1)+c+1,l(q-1)+ c+b+1}))  \not= 0$.
Hence
\begin{align}
&P(\pi(e_{l(p-1)+c+1,l(q-1)+1}^{(c+1+b)}))= P(\pi(e_{l(p-1)+c,l(q-1)+1}^{(c+b)})) +
P(\pi(e_{l(p-1)+c+1,l(q-1)+ c+b+1})) = \nonumber\\
&\sum_{i= 1}^{c+1}e_{l(p-1)+i,l(q-1)+i+b}.
\nonumber
\end{align}
Then in either case,
$$P(\pi(e_{l(p-1)+c+1,l(q-1)+1}^{(c+1+b)}))  = \sum_{i= 1}^{r}e_{l(p-1)+i,l(q-1)+i+b},\hbox{ where }
r = \hbox{min}\lbrace c+1, l-b\rbrace.$$
Thus if $0\leq k\leq l-1$ and $1\leq m\leq l$, then
(\ref{M3}) holds.
Similarly, one can prove that (\ref{M4}) holds.
In particular, if $m = l$ and $k = 0, \ldots, l-1$, we obtain
(\ref{M1}) and (\ref{M2}).

\end{proof}

\vskip 0.1in
The statement of Theorem \ref{T2} follows from Lemma \ref{Lem3} and Theorem \ref{maint} (a).
Lemma \ref{Lem3} and Theorem \ref{maint} (b) imply the following

\vskip 0.1in
\begin{corollary}\label{cor}

$${Gr}_KW_\chi\simeq S(\g^{\chi}).$$

\end{corollary}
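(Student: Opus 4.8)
The plan is to obtain Corollary \ref{cor} as an immediate consequence of Theorem \ref{maint}(b), once its hypotheses are verified in the situation at hand. Thus the entire argument reduces to two checks: that $\dim(\g_{-1})_{\bar 1}$ is even, and that a homogeneous basis of $\g^\chi$ admits lifts to $W_\chi$ under the highest-weight-component map $P$.

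For the first check, recall that the Dynkin $\Z$-grading attached to our $e$ is \emph{even}: $\g=\bigoplus_{k=1-l}^{l-1}\g_{2k}$, so in particular $\g_{-1}=0$ and $\dim(\g_{-1})_{\bar 1}=0$ is even. Hence only the first alternative of Theorem \ref{maint}(b) is relevant, and it yields precisely $Gr_KW_\chi\simeq S(\g^\chi)$ with no exterior factor $\C[\xi]$.

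For the second check, I would take as homogeneous basis of $\g^\chi=\g^E$ the elements
$$\sum_{i=1}^{l-k}e_{l(p-1)+i,l(q-1)+i+k},\qquad \sum_{i=1}^{l-k}(-1)^{i+k-1}f_{l(p-1)+i,l(q-1)+i+k}\qquad(0\le k\le l-1,\ 1\le p,q\le n/l),$$
which are homogeneous of Dynkin degree $2k$ by the description of $\g_{2k}$, and which exhaust $\g^\chi$ by the dimension count $\dim\g^\chi=(n^2/l\,|\,n^2/l)$. By Theorem \ref{T2} (after the harmless reindexing $l+k-1\mapsto l+k$) the elements $\pi(e_{lp,l(q-1)+1}^{(l+k)})$ and $\pi(f_{lp,l(q-1)+1}^{(l+k)})$ lie in $W_\chi$, and by Lemma \ref{Lem3} their images under $P$ are exactly the listed basis vectors. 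So the elements $Y_i$ demanded by Theorem \ref{maint} exist, and applying part (b) of that theorem completes the proof.

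I do not expect any real obstacle at this stage: all the substance has been carried out already, in Lemma \ref{Lem3} (the induction computing $P\big(\pi(e_{lp,l(q-1)+1}^{(l+k)})\big)$ and $P\big(\pi(f_{lp,l(q-1)+1}^{(l+k)})\big)$) and in Theorem \ref{T2} (that these elements belong to $W_\chi$). What remains is only the elementary bookkeeping of matching the $P$-images with a homogeneous basis of $\g^\chi$ and confirming the parity of $\dim(\g_{-1})_{\bar 1}$, both of which are routine given the explicit formulas for $\g^\chi$ and for the grading of $\g$.
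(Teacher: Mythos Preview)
Your proposal is correct and follows essentially the same approach as the paper, which simply states that the corollary follows from Lemma~\ref{Lem3} and Theorem~\ref{maint}(b); you have merely spelled out the verification of the hypotheses (the evenness of $\dim(\g_{-1})_{\bar 1}$, which is trivial here since the Dynkin grading is even, and the existence of the lifts $Y_i$). Your citation of Theorem~\ref{T2} to ensure the elements lie in $W_\chi$ is logically fine, though note that membership in $W_\chi$ is in fact a prerequisite for the proof of Theorem~\ref{T2} itself (it follows directly from the commutation relations (\ref{equG2}), which force $[\m,e_{lp,l(q-1)+1}^{(m)}]=[\m,f_{lp,l(q-1)+1}^{(m)}]=0$), so you could equally well cite that directly.
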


\noindent
This implies that Conjecture 2.8 in \cite{PS2} is true in this case.

\noindent
Let $$x^i_{p,q} = e_{l(p-1)+i, l(q-1) +i}, \quad
\xi^i_{p,q} = (-1)^{i+1}f_{l(p-1)+i, l(q-1) +i}$$
for $p, q = 1, \ldots, {n\over l}$ and $i = 1, \ldots, l$.
Then

\begin{align}
&\g_0 = <x^i_{p,q} \hbox{ }|\hbox{ }\xi^i_{p,q}>,
\nonumber\\
&\hbox{where } i = 1, \ldots, l, \hbox{ and } 1\leq p,q \leq  {n\over l}.
\nonumber
\end{align}

\begin{theorem}\label{T3}
\begin{align}
&\vartheta(\pi(e_{lp,l(q-1)+1}^{(l+k-1)})) = [\sum_{1\leq p_1, p_2, \ldots, p_{k-1}\leq{n\over l}} \nonumber\\
&\sum_{l\geq i_1\geq i_2\geq\ldots \geq i_k\geq 1}
(x^{i_1}_{p, p_1}  + (-1)^{k+1}\xi^{i_1}_{p, p_1})
(x^{i_2}_{p_1, p_2} + (-1)^{k}\xi^{i_2}_{p_1, p_2})
\ldots
(x^{i_{k}}_{p_{k-1}, q} + \xi^{i_{k}}_{p_{k-1}, q})]_{even},\nonumber\\
&\vartheta(\pi(f_{lp,l(q-1)+1}^{(l+k-1)})) =
[\sum_{1\leq p_1, p_2, \ldots, p_{k-1}\leq{n\over l}} \nonumber\\
&\sum_{l\geq i_1\geq i_2\geq\ldots \geq i_k\geq 1}
(x^{i_1}_{p, p_1}  + (-1)^{k+1}\xi^{i_1}_{p, p_1})
(x^{i_2}_{p_1, p_2} + (-1)^{k}\xi^{i_2}_{p_1, p_2})
\ldots
(x^{i_{k}}_{p_{k-1}, q} + \xi^{i_{k}}_{p_{k-1}, q})]_{odd}
\nonumber
\end{align}
\end{theorem}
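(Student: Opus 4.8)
The plan is to reduce Theorem \ref{T3} to a three–term recursion for the Harish--Chandra images and then solve that recursion. Since $lp=l(p-1)+l$, the elements in question are the cases $m=l$ of the following family: for $1\le m\le l$ and $0\le k\le l-1$ set
$$E^{m,k}_{p,q}:=\vartheta\bigl(\pi(e^{(m+k)}_{l(p-1)+m,\,l(q-1)+1})\bigr),\qquad F^{m,k}_{p,q}:=\vartheta\bigl(\pi(f^{(m+k)}_{l(p-1)+m,\,l(q-1)+1})\bigr),$$
and $E^{0,k}_{p,q}=F^{0,k}_{p,q}:=0$; Theorem \ref{T3} is then the computation of $E^{l,k-1}_{p,q}$ and $F^{l,k-1}_{p,q}$. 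Throughout I use three elementary facts: (i) $\pi(gY)=g\,\pi(Y)$ for $g\in\p$ and $Y\in U(\g)$, because $I_\chi$ is a left ideal; (ii) $\pi(Ya)=\chi(a)\,\pi(Y)$ for $a\in\m$, because $Y(a-\chi(a))\in I_\chi$; and (iii) $\vartheta\colon U(\p)\to U(\g_0)$ is an algebra homomorphism which is the identity on $U(\g_0)$ and vanishes on $\n$, since $\n\subset U(\p)^+$ and $U(\p)^+$ is its kernel.

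The base case $k=0$ is Lemma \ref{Lem2}: the right-hand sides of (\ref{equL1}) already lie in $U(\g_0)$, so $E^{m,0}_{p,q}=\sum_{i=1}^m x^i_{p,q}$ and $F^{m,0}_{p,q}=\sum_{i=1}^m\xi^i_{p,q}$, which is the asserted formula for $k=1$. The crux is to prove that for $k\ge 1$
\begin{align*}
E^{m,k}_{p,q}&=E^{m-1,k}_{p,q}+\sum_{s=1}^{n/l}\Bigl(x^{m}_{p,s}\,E^{m,k-1}_{s,q}+(-1)^{k}\,\xi^{m}_{p,s}\,F^{m,k-1}_{s,q}\Bigr),\\
F^{m,k}_{p,q}&=F^{m-1,k}_{p,q}+\sum_{s=1}^{n/l}\Bigl(x^{m}_{p,s}\,F^{m,k-1}_{s,q}+(-1)^{k}\,\xi^{m}_{p,s}\,E^{m,k-1}_{s,q}\Bigr).
\end{align*}
To establish this I expand $e^{(m+k)}_{l(p-1)+m,l(q-1)+1}$ (resp.\ $f^{(m+k)}_{l(p-1)+m,l(q-1)+1}$) by Sergeev's recursion (\ref{equG1}), writing the internal summation index as $l(s-1)+b$, $1\le s\le n/l$, $1\le b\le l$, and sort the summands by the position of the leftmost generator. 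If $b>m$ it lies in $\n$, so the summand dies under $\vartheta$ by (iii). If $b<m$ it lies in $\m$; commuting it past the Sergeev factor by (\ref{equG2}) and killing the resulting right-hand factor via (ii) and the $\chi$-values (\ref{equA1}) leaves a scalar multiple of $\pi(e^{(m+k-1)}_{l(p-1)+m,l(q-1)+1})$, plus — only for $b=m-1$ in the $e$-expansion, where $\chi(e_{l(p-1)+m,l(s-1)+m-1})=\delta_{p,s}$ — the term $\pi(e^{(m+k-1)}_{l(p-1)+m-1,l(q-1)+1})$, which is $E^{m-1,k}_{p,q}$ after $\vartheta$. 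If $b=m$ the leftmost generator is $x^m_{p,s}$, or $\xi^m_{p,s}$ up to the sign $(-1)^{m+1}$ coming from the normalization $\xi^i_{p,q}=(-1)^{i+1}f_{l(p-1)+i,l(q-1)+i}$; it lies in $\g_0\subset\p$, so by (i) it factors out in front and, after $\vartheta$, yields precisely the terms under the summation sign above. Finally, all the contributions carrying a free index $s$ (hence a factor $n/l$, namely the multiples of $\pi(e^{(m+k-1)}_{l(p-1)+m,l(q-1)+1})$) cancel: their multiplicities from the three sources — block-strictly-lower $b$ in the $e$-part, the diagonal correction at $b=m-1$, and all $b<m$ in the $f$-part — add up to $(m-2)+1-(m-1)=0$, while the signs $(-1)^{m+k+1}$ from (\ref{equG1}) together with those in the brackets (\ref{equG2}) collapse to the uniform sign $(-1)^k$. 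The $F$-recursion comes out identically with the roles of the $e$- and $f$-expansions exchanged.

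With the recursion in hand, introduce the $(n/l)\times(n/l)$ matrices $X^m=(x^m_{p,q})$, $\Xi^m=(\xi^m_{p,q})$ over $U(\g_0)$ and $\mathcal E^{m,k}=(E^{m,k}_{p,q})$, $\mathcal F^{m,k}=(F^{m,k}_{p,q})$. The recursion reads $\mathcal E^{m,k}=\mathcal E^{m-1,k}+X^m\mathcal E^{m,k-1}+(-1)^k\Xi^m\mathcal F^{m,k-1}$ and similarly for $\mathcal F$ with $\mathcal E\leftrightarrow\mathcal F$ in the last term, so $\mathcal G^{m,k}_{\pm}:=\mathcal E^{m,k}\pm\mathcal F^{m,k}$ satisfies $\mathcal G^{m,k}_{\pm}=\sum_{m'=1}^m\bigl(X^{m'}\pm(-1)^k\Xi^{m'}\bigr)\mathcal G^{m',k-1}_{\pm}$ with $\mathcal G^{m,0}_{\pm}=\sum_{i=1}^m(X^i\pm\Xi^i)$. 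Iterating down in $k$ gives
$$\mathcal G^{l,k-1}_{\pm}=\sum_{l\ge i_1\ge i_2\ge\cdots\ge i_k\ge1}\ \prod_{j=1}^{k}\bigl(X^{i_j}\pm(-1)^{k-j}\Xi^{i_j}\bigr),$$
a product of $k$ matrices whose $(p,q)$-entry carries the $k-1$ internal summations over $p_1,\dots,p_{k-1}$. Since each $x^i_{p,q}$ is even and each $\xi^i_{p,q}$ is odd, $\mathcal E^{l,k-1}=\tfrac12(\mathcal G^{l,k-1}_{+}+\mathcal G^{l,k-1}_{-})$ is the even part of $\mathcal G^{l,k-1}_{+}$ and $\mathcal F^{l,k-1}=\tfrac12(\mathcal G^{l,k-1}_{+}-\mathcal G^{l,k-1}_{-})$ its odd part; reading off the $(p,q)$-entry and using $(-1)^{k-j}=(-1)^{k+2-j}$ gives the two formulas of Theorem \ref{T3}. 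I expect the main obstacle to lie entirely in the third paragraph: the vanishing of all the $n/l$-proportional terms, which is what reduces Sergeev's recursion to a three-term recursion and is the exact-value analogue of the degree–weight estimates used in the proof of Lemma \ref{Lem3}, and the verification that the assorted signs from (\ref{equG1}), (\ref{equG2}) and the normalization of $\xi^i_{p,q}$ amalgamate into the clean signs above. Everything after that is formal.
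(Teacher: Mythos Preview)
Your proposal is correct and follows essentially the same route as the paper. Both arguments generalize to the family $\vartheta(\pi(e^{(m+k)}_{l(p-1)+m,l(q-1)+1}))+\vartheta(\pi(f^{(m+k)}_{l(p-1)+m,l(q-1)+1}))$, establish the same three-term recursion in $m$ (your $\mathcal G^{m,k}_+=\mathcal G^{m-1,k}_++(X^m+(-1)^k\Xi^m)\mathcal G^{m,k-1}_+$, which is exactly the paper's identity (\ref{P1}) at the inductive step), and then iterate to the closed formula before taking even and odd parts; your use of $\mathcal G_-$ to separate parities is harmless but redundant, since $E^{m,k}_{p,q}$ is even and $F^{m,k}_{p,q}$ is odd a priori.
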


\begin{proof}
\noindent
We will prove by induction on $s$ and $r$ that if $s\geq 0$ and $1\leq r \leq l$, then
\begin{align}\label{P1}
&\vartheta(\pi(e_{l(p-1) + r,l(q-1) + 1}^{(r+s)})) + \vartheta(\pi(f_{l(p-1) + r,l(q-1)+1}^{(r+s)})) = \\
&\sum_{1\leq p_1, p_2, \ldots, p_{s}\leq{n\over l}}
[\sum_{r\geq i_1\geq i_2\geq\ldots \geq i_{s+1}\geq 1}
(x^{i_1}_{p, p_1} + (-1)^{s}\xi^{i_1}_{p, p_1}) \ldots
(x^{i_{s+1}}_{p_s, q}  + \xi^{i_{s+1}}_{p_s, q})].
\nonumber
\end{align}
Note that if $s = 0$, then (\ref{P1}) holds for any $1\leq r \leq l$, since
\begin{align}
&\vartheta(\pi(e_{l(p-1)+r,l(q-1)+1}^{(r)})) +
\vartheta(\pi(f_{l(p-1)+r,l(q-1)+1}^{(r)})) = \nonumber\\
&\vartheta(\sum_{i= 1}^r\pi(e_{l(p-1)+i,l(q-1)+i})) +
\vartheta(\sum_{i= 1}^r(-1)^{i-1}\pi(f_{l(p-1)+i,l(q-1)+i})) =
\sum_{i= 1}^r(x^i_{p,q} + \xi^i_{p,q}).
\nonumber
\end{align}
Assume that if $s\leq k-1$, then (\ref{P1}) holds for any $1\leq r \leq l$.
Let $s = k$, show that (\ref{P1}) holds for $r = 1$.
We  have
\begin{align}
&\vartheta(\pi(e_{l(p-1)+1,l(q-1)+1}^{(1+k)})) + \vartheta(\pi(f_{l(p-1)+1,l(q-1)+1}^{(1+k)})) = \nonumber\\
&\sum_{p_1=1}^{n\over l}
\vartheta(\pi(e_{l(p-1)+1,l(p_1-1)+1)})\pi(e_{l(p_1 - 1)+1,l(q-1)+1}^{(k)}) + (-1)^k\pi(f_{l(p-1)+1,l(p_1 -1)+1})\pi(f_{l(p_1 - 1)+1,l(q-1)+1}^{(k)})) + \nonumber\\
&\sum_{p_1=1}^{n\over l}
\vartheta(\pi(e_{l(p-1)+1,l(p_1 - 1)+1})\pi(f_{l(p_1 - 1)+1,l(q-1) + 1}^{(k)}) + (-1)^k\pi(f_{l(p-1)+1,l(p_1-1)+1})\pi(e_{l(p_1-1)+1,l(q-1)+1}^{(k)}))=
\nonumber
\end{align}
\begin{align}
&\sum_{p_1=1}^{n\over l}
(e_{l(p-1)+1,l(p_1-1)+1} + (-1)^kf_{l(p-1)+1,l(p_1-1)+1})
(\vartheta(\pi(e_{l(p_1 - 1)+1,l(q-1) + 1}^{(k)}) +
\vartheta(\pi(f_{l(p_1 - 1)+1,l(q-1)+1}^{(k)})) = \nonumber\\
&\sum_{p_1=1}^{n\over l}
(x^1_{p, p_1} +  (-1)^k\xi^1_{p, p_1})
\sum_{1\leq p_2, \ldots, p_{k}\leq {n\over l}}
(x^1_{p_1, p_2} +  (-1)^{k-1}\xi^1_{p_1, p_2})\ldots
(x^1_{p_k, q} + \xi^1_{p_k, q}) = \nonumber\\
&\sum_{l\leq p_1, \ldots, p_k\leq {n\over l}}
(x^1_{p, p_1} +  (-1)^{k}\xi^1_{p, p_1})\ldots
(x^1_{p_k, q} + \xi^1_{p_k, q}).
\nonumber
\end{align}
Let $s = k$ and assume that (\ref{P1}) holds for $r\leq m$.
Show that it holds for $r = m+1$.
By induction hypothesis we have

\begin{align}
&\vartheta(\pi(e_{l(p-1)+m+1,l(q-1)+1}^{(m+1+k)})) + \vartheta(\pi(f_{l(p-1)+m+1,l(q-1)+1}^{(m+1+k)})) = \nonumber\\
&\vartheta(\pi(e_{l(p-1)+m+1,l(p-1)+m}))
\vartheta(\pi(e_{l(p-1)+m,l(q-1)+1}^{(m+k)})) + \nonumber\\
&\sum_{p_1=1}^{n\over l}
\Big(\vartheta(\pi(e_{l(p-1)+m+1,l(p_1-1)+m+1}))
\vartheta(\pi(e_{l(p_1-1)+m+1,l(q-1)+1}^{(m+k)}))\Big) +
\nonumber
\end{align}
\begin{align}
&(-1)^{m+k}
\sum_{p_1=1}^{n\over l}
\Big(\vartheta(\pi(f_{l(p-1)+m+1,l(p_1-1)+m+1}))
\vartheta(\pi(f_{l(p_1-1)+m+1,l(q-1)+1}^{(m+k)}))\Big) +\nonumber\\
&\vartheta(\pi(e_{l(p-1)+m+1,l(p-1)+m}))
\vartheta(\pi(f_{l(p-1)+m,l(q-1)+1}^{(m+k)})) + \nonumber\\
&\sum_{p_1=1}^{n\over l}
\Big(\vartheta(\pi(e_{l(p-1)+m+1,l(p_1-1)+m+1}))
\vartheta(\pi(f_{l(p_1-1)+m+1,l(q-1)+1}^{(m+k)}))\Big) + \nonumber\\
&(-1)^{m+k}
\sum_{p_1=1}^{n\over l}
\Big(\vartheta(\pi(f_{l(p-1)+m+1,l(p_1-1)+m+1}))
\vartheta(\pi(e_{l(p_1-1)+m+1,l(q-1)+1}^{(m+k)}))\Big) =
\nonumber
\end{align}
\begin{align}
&\vartheta(\pi(e_{l(p-1)+m,l(q-1)+1}^{(m+k)})) +
\vartheta(\pi(f_{l(p-1)+m,l(q-1)+1}^{(m+k)})) + \nonumber\\
&\sum_{p_1=1}^{n\over l}
\Big([\vartheta(\pi(e_{l(p-1)+m+1,l(p_1-1)+m+1})) +
(-1)^{m+k}\vartheta(\pi(f_{l(p-1)+m+1,l(p_1-1)+m+1}))] \times\nonumber\\
&[\vartheta(\pi(e_{l(p_1-1)+m+1,l(q-1)+1}^{(m+k)})) +
\vartheta(\pi(f_{l(p_1-1)+m+1,l(q-1)+1}^{(m+k)}))]\Big) = \nonumber\\
&\sum_{l\leq p_1, \ldots, p_k\leq {n\over l}}
\Big(\sum_{m\geq i_1\geq i_2\geq\ldots\geq i_{k+1}\geq 1}
(x^{i_1}_{p, p_1} + (-1)^{k}\xi^{i_1}_{p, p_1})\ldots
(x^{i_{k+1}}_{p_k, q} + \xi^{i_{k+1}}_{p_k, q})\Big) + \nonumber\\
&\sum_{p_1=1}^{n\over l}(x^{m+1}_{p, p_1} + (-1)^{k}\xi^{m+1}_{p, p_1})
\sum_{l\leq p_2, \ldots, p_k\leq {n\over l}}
\Big(\sum_{m+1\geq i_1\geq i_2\geq\ldots\geq i_{k}\geq 1}
(x^{i_1}_{p_1, p_2} + (-1)^{k-1}\xi^{i_1}_{p_1, p_2})\ldots
(x^{i_k}_{p_k, q} + \xi^{i-k}_{p_k, q})\Big) =
\nonumber\\
&\sum_{l\leq p_1, \ldots, p_k\leq {n\over l}}
\Big(\sum_{m+1\geq i_1\geq i_2\geq\ldots\geq i_{k+1}\geq 1}
(x^{i_1}_{p, p_1} + (-1)^{k}\xi^{i_1}_{p, p_1})\ldots
(x^{i_{k+1}}_{p_k, q} + \xi^{i_{k+1}}_{p_k,q})\Big).
\nonumber
\end{align}
Thus (\ref{P1}) holds. In particular, if $r = l$ and $s = k-1$,
we obtain the statement of Theorem \ref{T3}.
\end{proof}

\noindent
At the moment we are interested in $Y(Q({n\over l}))$.
\begin{theorem}\label{T4}
 Let $ 1\leq p, q\leq {n\over l}$.
\begin{align}
&U^{\otimes l}\circ \Delta_l^{op}(T_{q,p}^{(r)})
= (-1)^r[\sum_{1\leq p_1, p_2, \ldots, p_{r-1}\leq{n\over l}} \nonumber\\
&\sum_{l\geq i_1\geq i_2\geq\ldots \geq i_r\geq 1}
(x^{i_1}_{p, p_1}  + (-1)^{r+1}\xi^{i_1}_{p, p_1})
(x^{i_2}_{p_1, p_2} + (-1)^{r}\xi^{i_2}_{p_1, p_2})
\ldots
(x^{i_{r}}_{p_{r-1}, q} + \xi^{i_{r}}_{p_{r-1}, q})]_{even},\nonumber\\
&U^{\otimes l}\circ \Delta_l^{op}(T_{-q,p}^{(r)}) =
(-1)^r [\sum_{1\leq p_1, p_2, \ldots, p_{r-1}\leq{n\over l}} \nonumber\\
&\sum_{l\geq i_1\geq i_2\geq\ldots \geq i_r\geq 1}
(x^{i_1}_{p, p_1}  + (-1)^{r+1}\xi^{i_1}_{p, p_1})
(x^{i_2}_{p_1, p_2} + (-1)^{r}\xi^{i_2}_{p_1, p_2})
\ldots
(x^{i_{r}}_{p_{r-1}, q} + \xi^{i_{r}}_{p_{r-1}, q})]_{odd}
\nonumber
\end{align}
\end{theorem}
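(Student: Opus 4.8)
The plan is to reduce Theorem~\ref{T4} to the already-established Theorem~\ref{T3} by tracing through the explicit formulas for the homomorphism $U$ (Lemma~\ref{LemU}) and the iterated opposite comultiplication $\Delta_l^{op}$. First I would compute $\Delta_l^{op}(T_{q,p}^{(r)})$ by unwinding the recursive definition $\Delta_l^{op} = \Delta_{l-1,l}^{op}\circ\cdots\circ\Delta^{op}$, using formula~(\ref{opp}) at each step. Iterating~(\ref{opp}) gives
\begin{equation}
\Delta_l^{op}(T_{q,p}^{(r)}) = \sum T_{k_1,p}^{(s_1)}\otimes T_{k_2,k_1}^{(s_2)}\otimes\cdots\otimes T_{q,k_{l-1}}^{(s_l)},
\nonumber
\end{equation}
where the sum is over all intermediate indices $k_1,\dots,k_{l-1}\in\{\pm1,\dots,\pm\frac{n}{l}\}$ and all $s_1+\cdots+s_l = r$ with $s_j\geq 0$; note that the opposite comultiplication, unlike~(\ref{delta}), carries no sign, which is what makes the bookkeeping manageable. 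Then I would apply $U^{\otimes l}$ factor by factor via~(\ref{homomorphism}): each tensor factor $T_{a,b}^{(s)}$ is sent to $(-1)^s e^{(s)}_{b,a}$ or $(-1)^s f^{(s)}_{b,a}$ according to the signs of the indices, and the total sign collected is $(-1)^{s_1+\cdots+s_l} = (-1)^r$, which accounts for the overall factor $(-1)^r$ appearing in both claimed formulas.

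Next I would match the right-hand side against the output of Theorem~\ref{T3}. The key dictionary is: the elements $x^i_{p,q} = e_{l(p-1)+i,l(q-1)+i}$ and $\xi^i_{p,q} = (-1)^{i+1}f_{l(p-1)+i,l(q-1)+i}$ span $\g_0$, and under the identification $U(\g_0)\cong U(Q(\frac{n}{l}))^{\otimes l}$ afforded by the block structure (the $i$-th ``diagonal'' copy of $Q(\frac{n}{l})$ inside $\g_0$ sits in the tensor slot indexed by $i$), the element $ev^{\otimes l}\circ\Delta_l^{op}$ in $U(Q(\frac{n}{l}))^{\otimes l}$ corresponds precisely to the expressions in $x^i_{p,q},\xi^i_{p,q}$ that appear in Theorem~\ref{T3}. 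Concretely, I would observe that the sum over intermediate indices $k_1,\dots,k_{l-1}$ and degrees $s_1,\dots,s_l$ in $\Delta_l^{op}$, after applying $U^{\otimes l}$ and the Sergeev elements~(\ref{equG1}), matches the sum over the path $p, p_1, \dots, p_{r-1}, q$ and the weakly decreasing tuple $l\geq i_1\geq\cdots\geq i_r\geq 1$ in Theorem~\ref{T3}: the ordering constraint on the $i_j$ comes from the PBW/Kazhdan normal-ordering that was used to compute $\vartheta$ on the Sergeev elements, and the alternating signs $(-1)^{r+1},(-1)^r,\dots$ in the product of $(x+\pm\xi)$ factors track the parity shift induced by passing an odd generator past each subsequent factor. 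The parity bracket $[\,\cdot\,]_{even}$ versus $[\,\cdot\,]_{odd}$ distinguishes the $T_{q,p}$ case (sending to $e$-type elements) from the $T_{-q,p}$ case (sending to $f$-type elements), exactly as in~(\ref{homomorphism}).

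The cleanest route is therefore to prove the identity
\begin{equation}
U^{\otimes l}\circ\Delta_l^{op}(T_{q,p}^{(r)}) + U^{\otimes l}\circ\Delta_l^{op}(T_{-q,p}^{(r)}) = (-1)^r\big(\vartheta(\pi(e^{(l+r-1)}_{lp,l(q-1)+1})) + \vartheta(\pi(f^{(l+r-1)}_{lp,l(q-1)+1}))\big)
\nonumber
\end{equation}
as a single equation in $U(\g_0)\cong U(Q(\frac{n}{l}))^{\otimes l}$, after which separating even and odd parts yields the two displayed formulas simultaneously; the right-hand side is given explicitly by Theorem~\ref{T3}. I would establish this by induction on $r$, with the inductive step on the left mirroring the recursion in the proof of Theorem~\ref{T3}: extracting the ``last'' tensor factor of $\Delta_l^{op}$ corresponds to factoring out the leading term $(x^{i_1}_{p,p_1}+(-1)^{r+1}\xi^{i_1}_{p,p_1})$ and reindexing, which is precisely the move $r\mapsto r+1$, $m\mapsto m+1$ performed in~(\ref{P1}). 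I expect the main obstacle to be the sign bookkeeping: reconciling the sign conventions in~(\ref{opp}), in $U = \omega\circ\beta$ (which introduces $(-1)^r$ via $\beta$ and reverses order via $\omega$ being an anti-homomorphism), and in the recursive Sergeev relations~(\ref{equG1})--(\ref{equG2}), all against the alternating pattern $(-1)^{r+1},(-1)^r,\dots,+1$ in Theorem~\ref{T3}. A careful parity audit at each reindexing step — in particular verifying that the cumulative Koszul signs from moving odd factors past one another collapse to the stated pattern — is where the real work lies; the combinatorial skeleton of the argument is otherwise a direct transcription of the proof of Theorem~\ref{T3}.
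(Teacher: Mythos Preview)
Your computational core---pass to $T_{q,p}^{(r)+}:=T_{q,p}^{(r)}+T_{-q,p}^{(r)}$, apply $U^{\otimes l}\circ\Delta_l^{op}$, and then separate even and odd parts---is exactly what the paper does. The paper packages this as: (i) the one-step recursion~(\ref{X1}) for $\Delta_l^{op}(T_{q,p}^{(r)+})$, (ii) the closed formula~(\ref{X2}) expressing $U(T_{q,p}^{(r)+})$ as a product of $r$ linear factors $(e_{\cdot,\cdot}\pm f_{\cdot,\cdot})$ coming straight from Sergeev's recursion~(\ref{equG1}), and (iii) Lemma~\ref{L4}, which combines (i) and (ii) under the identification $x^i_{p,q}\leftrightarrow 1^{\otimes l-i}\otimes e_{p,q}\otimes 1^{\otimes i-1}$.

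Two points of your framing are off, though. First, the proposal to ``reduce to Theorem~\ref{T3}'' is a logical misstep: in the paper Theorems~\ref{T3} and~\ref{T4} are independent, parallel computations---one evaluates $\vartheta\circ\pi$ on Sergeev elements, the other evaluates $U^{\otimes l}\circ\Delta_l^{op}$ on Yangian generators---and the fact that they yield the same expression is precisely the content of Corollary~\ref{corol}, proved \emph{after} both theorems. Invoking Theorem~\ref{T3} does not save you any work in proving Theorem~\ref{T4}; you still have to compute the left-hand side from scratch, which is the paper's Lemma~\ref{L4}.

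Second, the natural induction variable is $l$ (the number of tensor slots), not $r$: peeling off one tensor factor via~(\ref{X1}) reduces $\Delta_l^{op}$ to $\Delta_{l-1}^{op}$, and the first slot (corresponding to $i=l$ under Lemma~\ref{L4}'s identification) absorbs the run $i_1=\cdots=i_{r-s}=l$ while the remaining slots inductively contribute the terms with $i_j\le l-1$. This is also where the weak-decrease constraint $i_1\ge\cdots\ge i_r$ actually comes from---it is the tensor-slot ordering, not any PBW or Kazhdan normal ordering as you suggest. Your proposed induction on $r$ (``extracting the last tensor factor'') does not match the recursive structure of $\Delta_l^{op}$ and would not go through as stated.
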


\begin{proof}

\noindent
Let $1\leq p, q\leq {n\over l}$.
Set
\begin{align}\label{X}
& {T}_{q,p}^{(r)+}=T_{q,p}^{(r)}+T_{-q,p}^{(r)},\nonumber\\
& {T}_{q,p}^{(r)-}=T_{q,p}^{(r)}-T_{-q,p}^{(r)}.
\nonumber
\end{align}
One can easily verify the following recursive relations
\begin{equation}\label{X1}
\Delta^{op}_l({T}_{q,p}^{(r)+})=\sum_{s=0}^r
\Big( \sum_{k = 1}^{n\over l}
\left(T_{k,p}^{(r-s)}+
(-1)^sT_{-k,p}^{(r-s)}\right)\otimes \Delta^{op}_{l-1}\left({T}_{q,k}^{(s)+}\right)\Big).
\end{equation}
As a direct consequence of (\ref{equG1}) we obtain
\begin{align}\label{X2}
&U({T}_{q,p}^{(r)+})=(-1)^r\sum_{1\leq p_1, \ldots, p_{r-1}\leq {n\over l}}\\
&\left(e_{p,p_1}+(-1)^{r-1}f_{p,p_1}\right)
\left(e_{p_1,p_2}+(-1)^{r-2}f_{p_1,p_2}\right)\ldots
\left(e_{p_{r-1},q}+ f_{p_{r-1},q}\right),\nonumber
\end{align}
\begin{align}\label{X3}
&U({T}_{q,p}^{(r)-})=
(-1)^r\sum_{1\leq p_1, \ldots, p_{r-1}\leq {n\over l}}\\
&\left(e_{p,p_1}+(-1)^{r}f_{p,p_1}\right)
\left(e_{p_1,p_2}+(-1)^{r-1}f_{p_1,p_2}\right)\ldots
\left(e_{p_{r-1},q} - f_{p_{r-1},q}\right).\nonumber
\end{align}

\begin{lemma}\label{L4}
Identify  $U(\g_0)\subset U(Q(n))$ with $U(Q({n\over l}))^{\otimes l}$ by setting
\begin{align}
&x_{p,q}^i\mapsto 1^{\otimes l-i}\otimes e_{p,q}\otimes 1^{\otimes i-1},\nonumber\\
&\xi_{p,q}^i\mapsto 1^{\otimes l-i}\otimes f_{p,q}\otimes 1^{\otimes i-1}.
\nonumber
\end{align}
Then
\begin{align}
&U^{\otimes l}\circ\Delta_l^{op}({T}_{q,p}^{(r)+})=
(-1)^r[\sum_{1\leq p_1, p_2, \ldots, p_{r-1}\leq{n\over l}} \nonumber\\
&\sum_{l\geq i_1\geq i_2\geq\ldots \geq i_r\geq 1}
(x^{i_1}_{p, p_1}  + (-1)^{r+1}\xi^{i_1}_{p, p_1})
(x^{i_2}_{p_1, p_2} + (-1)^{r}\xi^{i_2}_{p_1, p_2})
\ldots
(x^{i_{r}}_{p_{r-1}, q} + \xi^{i_{r}}_{p_{r-1}, q})].
\nonumber
\end{align}
\end{lemma}

\begin{proof} Follows from (\ref{X1}), (\ref{X2}) and (\ref{X3}).
\end{proof}

\noindent
This completes the proof of Theorem \ref{T4}.
\end{proof}

\begin{corollary}\label{corol}
There exists a surjective homomorphism:
$$\varphi: Y(Q({n\over l}))\longrightarrow W_{\chi}$$
defined as follows:
\begin{equation}\label{equHC}
\varphi(T_{q,p}^{(r)}) = (-1)^r\pi(e_{lp,l(q-1)+1}^{(l+r-1)}),\quad
\varphi(T_{-q,p}^{(r)}) = (-1)^r\pi(f_{lp,l(q-1)+1}^{(l+r-1)}), \hbox{ for } r = 1, 2, \ldots.
\nonumber
\end{equation}
\end{corollary}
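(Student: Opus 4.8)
The plan is to realize $\varphi$ as the composite of the algebra homomorphism $\psi:=U^{\otimes l}\circ\Delta_l^{op}\colon Y(Q({n\over l}))\to (U(Q({n\over l})))^{\otimes l}$ with the inverse of the Harish--Chandra embedding, and then to identify its values on the standard generators of $Y(Q({n\over l}))$ using Theorems \ref{T3} and \ref{T4}. First I would note that $\psi$ is a homomorphism of associative algebras, being the composite of the homomorphism $\Delta_l^{op}$ with the $l$-th tensor power of the homomorphism $U$ of Lemma \ref{LemU}. Identifying $(U(Q({n\over l})))^{\otimes l}$ with $U(\g_0)\subset U(\p)$ as in Lemma \ref{L4}, and recalling that the Harish--Chandra homomorphism $\vartheta\colon W_\chi\to U(\g_0)$ is injective by \cite{PS2} (Theorem 3.1), Theorem \ref{T1} amounts to the equality $\vartheta(W_\chi)=\hbox{im}\,\psi$ inside $U(\g_0)$. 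Hence $\vartheta$ restricts to an isomorphism $W_\chi\to\hbox{im}\,\psi$, and I would set $\varphi:=(\vartheta|_{W_\chi})^{-1}\circ\psi$; this is a surjective algebra homomorphism $Y(Q({n\over l}))\to W_\chi$.

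Next I would compute $\varphi$ on the generators $T_{q,p}^{(r)}$, $T_{-q,p}^{(r)}$ (with $1\le p,q\le {n\over l}$ and $r\ge 1$), which generate $Y(Q({n\over l}))$ in view of relation (\ref{equY5}). On the one hand $\vartheta(\varphi(T_{q,p}^{(r)}))=\psi(T_{q,p}^{(r)})$, which Theorem \ref{T4} evaluates as $(-1)^r$ times the even part of the iterated sum appearing there, and similarly $\psi(T_{-q,p}^{(r)})$ is $(-1)^r$ times its odd part. On the other hand Theorem \ref{T3} evaluates $\vartheta(\pi(e_{lp,l(q-1)+1}^{(l+r-1)}))$ and $\vartheta(\pi(f_{lp,l(q-1)+1}^{(l+r-1)}))$ as exactly the even and odd parts of the same sum, where $\vartheta$ is here understood as the projection $U(\p)\to U(\g_0)$ transported along $U(\g)/I_\chi\cong U(\p)$, which agrees with $\vartheta|_{W_\chi}$ on $W_\chi$. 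Since $\pi(e_{lp,l(q-1)+1}^{(l+r-1)})$ and $\pi(f_{lp,l(q-1)+1}^{(l+r-1)})$ lie in $W_\chi$ --- for $r\le l$ this is Theorem \ref{T2}, and for general $r$ it is built into the description of $W_\chi$ behind Theorem \ref{T1} --- comparing the two sides gives $\vartheta(\varphi(T_{q,p}^{(r)}))=\vartheta((-1)^r\pi(e_{lp,l(q-1)+1}^{(l+r-1)}))$ and likewise for $T_{-q,p}^{(r)}$, and the injectivity of $\vartheta$ on $W_\chi$ then forces the asserted formulas for $\varphi$.

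I expect the only delicate point to be essentially notational: matching the index ranges ($p_1,\dots,p_{r-1}$ and $l\ge i_1\ge\dots\ge i_r\ge 1$) and the alternating signs $(-1)^{r+1},(-1)^r$ in the sum of Theorem \ref{T3} with those in the sum of Theorem \ref{T4}, and being careful that both sides of each identity are a priori elements of $W_\chi$ so that the injectivity of the Harish--Chandra map may legitimately be invoked. No genuinely new computation is needed beyond Theorems \ref{T1}, \ref{T3} and \ref{T4}; in particular the homomorphism property of $\varphi$ is automatic from its construction, so one need not verify the defining relations of the super-Yangian directly.
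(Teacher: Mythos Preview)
Your strategy---set $\varphi=\vartheta^{-1}\circ U^{\otimes l}\circ\Delta_l^{op}$ and match Theorems~\ref{T3} and~\ref{T4}---is exactly the paper's. There is, however, a circularity: in the paper this Corollary is what \emph{establishes} Theorem~\ref{T1} (the line ``This proves Theorem~\ref{T1}'' appears immediately after the proof), so you may not invoke Theorem~\ref{T1} here. You do so twice, once to obtain $\vartheta(W_\chi)=\hbox{im}\,\psi$ and once to place $\pi(e_{lp,l(q-1)+1}^{(l+r-1)})$ in $W_\chi$ for $r>l$.

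The repair is immediate and requires nothing beyond what is already in the paper. From the commutation relations (\ref{equG2}) one checks that every generator of $\m$ (namely $e_{l(s-1)+i+1,l(t-1)+i}$ or $f_{l(s-1)+i+1,l(t-1)+i}$ with $1\le i\le l-1$) commutes with $e_{lp,l(q-1)+1}^{(m)}$ and $f_{lp,l(q-1)+1}^{(m)}$ for every $m$: the Kronecker deltas appearing in (\ref{equG2}) would force $i\equiv 0\pmod l$, which is excluded. Hence $\pi(e_{lp,l(q-1)+1}^{(m)})$ and $\pi(f_{lp,l(q-1)+1}^{(m)})$ lie in $W_\chi$ for all $m$, directly from (\ref{walg}). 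Theorems~\ref{T3} and~\ref{T4} then show $\psi(T_{\pm q,p}^{(r)})\in\vartheta(W_\chi)$ for every $r\ge 1$, so $\varphi:=\vartheta^{-1}\circ\psi$ is a well-defined homomorphism into $W_\chi$ with the stated values on generators. Surjectivity follows from Theorem~\ref{T2} alone, since the images for $1\le r\le l$ already generate $W_\chi$.
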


\begin{proof} Recall that the Harish-Chandra homomorphism $\vartheta: W_{\chi}\to U(\g_0)$ is injective (\cite{PS2}).
We have
$$(-1)^r\vartheta(\pi(e_{lp,l(q-1)+1}^{(l+r-1)}))=U^{\otimes l}\circ\Delta^{op}_l(T_{q,p}^{(r)}), \quad (-1)^r\vartheta(\pi(f_{lp,l(q-1)+1}^{(l+r-1)}))=U^{\otimes l}\circ\Delta^{op}_l(T_{-q,p}^{(r)}).$$
Hence $\varphi=\vartheta^{-1}\circ U^{\otimes l}\circ \Delta^{op}_l$ is a surjective homomorphism $\varphi: Y(Q({n\over l}))\longrightarrow W_{\chi}$.
\end{proof}

\noindent
This proves Theorem  \ref{T1}.

\begin{theorem}\label{ton}
\begin{equation}\label{kl}
U^{\otimes k}\circ \Delta^{op}_k (Y(Q({n\over l}))) =
\hbox{ev}^{\otimes k}\circ \Delta_k (Y(Q({n\over l}))).
\end{equation}
\end{theorem}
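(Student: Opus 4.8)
The plan is to exhibit an automorphism $\zeta$ of $Y(Q({n\over l}))$ for which
\[
U^{\otimes k}\circ\Delta^{op}_k \;=\; ev^{\otimes k}\circ\Delta_k\circ\zeta .
\]
Since precomposing with an automorphism of the source does not change the image, this yields $(\ref{kl})$ at once. The automorphism will be $\zeta:=\beta\circ S$, where $\beta$ is the anti-automorphism $(\ref{antiQ})$ and $S$ is the antipode $(\ref{antipodal})$; note $\zeta$ is genuinely an automorphism, being a composition of two anti-automorphisms.

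\textbf{Step 1 (the main lemma): $U=ev\circ\zeta$.} Both sides are homomorphisms $Y(Q({n\over l}))\to U(Q({n\over l}))$, so it suffices to check the identity on the generators $T^{(r)}_{i,j}$. The point is that $ev$ annihilates $T^{(s)}_{i,j}$ for every $s\ge2$ and sends $T^{(1)}_{i,j}$ into $\g_0$; hence, in the expansion $(\ref{antiS})$ of $S(T^{(r)}_{i,j})$, after applying $\beta$ and then $ev$ only the summand with $m=r$ and all $r_a=1$ survives (the term $-T^{(r)}_{i,j}$ also dies for $r\ge2$). Since $\beta$ reverses that word, rescales each letter by $-1$, and $ev$ sends each $T^{(1)}$ to $-e$ or $-f$ by $(\ref{eval})$, one is left with a sum of products of $r$ of the $e_{a,b},f_{a,b}$ indexed exactly as in the Sergeev recursion $(\ref{equG1})$; comparing, this sum equals $(-1)^re^{(r)}_{j,i}$ for $i,j>0$ and $(-1)^rf^{(r)}_{j,-i}$ for $i<0<j$, i.e.\ it equals $U(T^{(r)}_{i,j})$ by Lemma $\ref{LemU}$. (This re-derives $U$ through the antipode; it is essentially contained in \cite{NS}, \cite{N}.) This step, which is where all the sign bookkeeping in $(\ref{antiS})$ versus $(\ref{equG1})$ lives, is the main obstacle; everything else is formal.

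\textbf{Step 2 (compatibility with comultiplication).} The antipode is always a coalgebra anti-homomorphism, $\Delta\circ S=(S\otimes S)\circ\Delta^{op}$, while $\beta$ is a coalgebra homomorphism, $\Delta\circ\beta=(\beta\otimes\beta)\circ\Delta$; the latter is checked directly on generating series, using $\beta(T_{i,j}(u))=T_{i,j}(-u)$ and the matrix form $\Delta(T_{i,j}(u))=\sum_c(-1)^{(p(i)+p(c))(p(j)+p(c))}T_{i,c}(u)\otimes T_{c,j}(u)$ of $(\ref{delta})$. Composing the two gives $\Delta\circ\zeta=(\zeta\otimes\zeta)\circ\Delta^{op}$ (note $(\beta\otimes\beta)\circ(S\otimes S)=\zeta\otimes\zeta$ as linear maps). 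An induction on $k$, using coassociativity and $\Delta_k=\Delta_{k-1,k}\circ\Delta_{k-1}$, $\Delta^{op}_k=\Delta^{op}_{k-1,k}\circ\Delta^{op}_{k-1}$, upgrades this to $\Delta_k\circ\zeta=\zeta^{\otimes k}\circ\Delta^{op}_k$ for all $k$.

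\textbf{Step 3 (conclusion).} Combining Steps 1 and 2,
\[
U^{\otimes k}\circ\Delta^{op}_k=(ev\circ\zeta)^{\otimes k}\circ\Delta^{op}_k=ev^{\otimes k}\circ\bigl(\zeta^{\otimes k}\circ\Delta^{op}_k\bigr)=ev^{\otimes k}\circ\Delta_k\circ\zeta .
\]
Since $\zeta$ is an automorphism, $\zeta\bigl(Y(Q({n\over l}))\bigr)=Y(Q({n\over l}))$, and therefore
\[
U^{\otimes k}\circ\Delta^{op}_k\bigl(Y(Q({n\over l}))\bigr)=ev^{\otimes k}\circ\Delta_k\bigl(\zeta(Y(Q({n\over l})))\bigr)=ev^{\otimes k}\circ\Delta_k\bigl(Y(Q({n\over l}))\bigr),
\]
which is $(\ref{kl})$. (As a byproduct the two maps coincide up to precomposition with $\zeta$, which can also be verified directly on generators against Theorem $\ref{T4}$ as a consistency check.)
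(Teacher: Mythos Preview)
Your argument is correct and is essentially the paper's own proof: the paper shows $\bar{ev}\circ S=U$ on the generators $T^{(r)+}_{q,p}$ (its Lemma~\ref{Lem7}) and then uses the standard fact that the antipode intertwines $\Delta$ and $\Delta^{op}$, which is exactly your Steps~1--2 once one notes $\bar{ev}=\alpha\circ ev=ev\circ\beta$ as anti-homomorphisms. The only cosmetic difference is that you package $\bar{ev}\circ S$ as $ev\circ\zeta$ with $\zeta=\beta\circ S$ a genuine automorphism, which makes the equality-of-images conclusion in Step~3 a touch more transparent than in the paper.
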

\begin{proof}
First, we will prove the following
\begin{lemma}\label{Lem7}
\begin{equation}\label{bone}
{\bar{ev}}\circ S({T}^{(r)+}_{q, p}) = U({T}^{(r)+}_{q, p}).
\end{equation}
\end{lemma}

\begin{proof}
According to (\ref{antiS}) and (\ref{antieval}) we have that
\begin{equation}\label{anti}
\bar{ev}\circ S (T_{p,q}^{(r)}) = (-1)^r
\bar{ev}((\sum_{i_1, \ldots i_{r-1}\in {\lbrace \pm 1, \ldots \pm n\rbrace}} (-1)^{\nu (p,i_1,\ldots, i_{r-1},q)}T_{p,i_1}^{(1)}T_{i_1,i_2}^{(1)}\ldots T_{i_{r-1},q}^{(1)})).
\end{equation}

\noindent
We proceed by induction on $r$. The statement is obviously true if $r = 1$.
Assume that (\ref{bone}) holds for $r$. Then according to (\ref{anti})
\begin{align}
&{\bar{ev}}\circ S({T}^{(r+1)}_{q, p}) =
(-1)^{r+1}\bar{ev}(\sum_{i_1,\ldots, i_{r}\in \lbrace \pm 1, \ldots, \pm
{n\over l}\rbrace}
(-1)^{\nu(q, i_1, \ldots, i_r, p)}
{T}^{(1)}_{q, i_1}{T}^{(1)}_{i_1, i_2}\ldots {T}^{(1)}_{i_{r-1}, i_r}{T}^{(1)}_{i_{r}, p}) =\nonumber\\
&(-1)^{r+1}\sum_{i_r = 1}^{n\over l}\bar{ev}
(\sum_{i_1,\ldots, i_{r-1}\in \lbrace \pm 1, \ldots,
\pm {n\over l}\rbrace}(-1)^{\nu(q, i_1, \ldots, i_r)}
{T}^{(1)}_{q, i_1}{T}^{(1)}_{i_1, i_2}\ldots {T}^{(1)}_{i_{r-1}, i_r}
{T}^{(1)}_{i_{r}, p}) +\nonumber\\
&(-1)^{r+1}\sum_{i_r = 1}^{n\over l}\bar{ev}
(\sum_{i_1,\ldots, i_{r-1}\in \lbrace \pm 1, \ldots,
\pm {n\over l}\rbrace}(-1)^{\nu(q, i_1, \ldots, -i_r)+1}
{T}^{(1)}_{q, i_1}{T}^{(1)}_{i_1, i_2}\ldots {T}^{(1)}_{i_{r-1}, -i_r}
{T}^{(1)}_{-i_{r}, p}) =\nonumber
\end{align}
\begin{align}
&(-1)^{r+1}\sum_{i_r = 1}^{n\over l}\bar{ev}({T}^{(1)}_{i_{r}, p})
\bar{ev}(\sum_{i_1,\ldots, i_{r-1}\in \lbrace \pm 1, \ldots, \pm {n\over l}\rbrace}(-1)^{\nu(q, i_1, \ldots, i_r)}
{T}^{(1)}_{q, i_1}{T}^{(1)}_{i_1, i_2}\ldots {T}^{(1)}_{i_{r-1}, i_r})
 +\nonumber\\
 &(-1)^{r+1}\sum_{i_r = 1}^{n\over l}\bar{ev}({T}^{(1)}_{-i_{r}, p})
\bar{ev}(\sum_{i_1,\ldots, i_{r-1}\in \lbrace \pm 1, \ldots, \pm {n\over l}\rbrace}(-1)^{\nu(-q, -i_1, \ldots, i_r)}
{T}^{(1)}_{-q, -i_1}{T}^{(1)}_{-i_1, -i_2}\ldots {T}^{(1)}_{-i_{r-1}, i_r})
(-1)^r = \nonumber\\
 &(-1)^{r+1}\sum_{i_r = 1}^{n\over l}
e_{p,i_r}({\bar{ev}}\circ S)({T}^{(r)}_{q, i_r})(-1)^r +
f_{p,i_r}({\bar{ev}}\circ S)({T}^{(r)}_{-q, i_r}).\nonumber
\end{align}
Similarly,
\begin{align}
&{\bar{ev}}\circ S({T}^{(r+1)}_{-q, p}) =
(-1)^{r+1}\bar{ev}(\sum_{i_1,\ldots, i_{r}\in \lbrace \pm 1, \ldots, \pm
{n \over l}\rbrace}
(-1)^{\nu(-q, i_1, \ldots, i_r, p)}
{T}^{(1)}_{-q, i_1}{T}^{(1)}_{i_1, i_2}\ldots {T}^{(1)}_{i_{r-1}, i_r}{T}^{(1)}_{i_{r}, p}) =\nonumber\\
&(-1)^{r+1}\sum_{i_r = 1}^{n\over l}\bar{ev}
(\sum_{i_1,\ldots, i_{r-1}\in \lbrace \pm 1, \ldots, \pm {n \over l}\rbrace}(-1)^{\nu(-q, i_1, \ldots, i_r)}
{T}^{(1)}_{-q, i_1}{T}^{(1)}_{i_1, i_2}\ldots {T}^{(1)}_{i_{r-1}, i_r}
{T}^{(1)}_{i_{r}, p}) +\nonumber\\
&(-1)^{r+1}\sum_{i_r = 1}^{n\over l}\bar{ev}
(\sum_{i_1,\ldots, i_{r-1}\in \lbrace \pm 1, \ldots, \pm {n \over l}\rbrace}(-1)^{\nu(-q, i_1, \ldots, -i_r)}
{T}^{(1)}_{-q, i_1}{T}^{(1)}_{i_1, i_2}\ldots {T}^{(1)}_{i_{r-1}, -i_r}
{T}^{(1)}_{-i_{r}, p}) =\nonumber
\end{align}
\begin{align}
&(-1)^{r+1}\sum_{i_r = 1}^{n\over l}\bar{ev}({T}^{(1)}_{i_{r}, p})
\bar{ev}(\sum_{i_1,\ldots, i_{r-1}\in \lbrace \pm 1, \ldots, \pm {n \over l}\rbrace}(-1)^{\nu(-q, i_1, \ldots, i_r)}
{T}^{(1)}_{-q, i_1}{T}^{(1)}_{i_1, i_2}\ldots {T}^{(1)}_{i_{r-1}, i_r})
 +\nonumber\\
 &(-1)^{r+1}\sum_{i_r = 1}^{n\over l}\bar{ev}({T}^{(1)}_{-i_{r}, p})
\bar{ev}(\sum_{i_1,\ldots, i_{r-1}\in \lbrace \pm 1, \ldots, \pm {n \over l}\rbrace}(-1)^{\nu(q, -i_1, \ldots, i_r)}
{T}^{(1)}_{q, -i_1}{T}^{(1)}_{-i_1, -i_2}\ldots {T}^{(1)}_{-i_{r-1}, i_r})
(-1)^r = \nonumber\\
 &(-1)^{r+1}\sum_{i_r = 1}^{n\over l}
e_{p,i_r}({\bar{ev}}\circ S)({T}^{(r)}_{-q, i_r})(-1)^r +
f_{p,i_r}({\bar{ev}}\circ S)({T}^{(r)}_{q, i_r}).\nonumber
\end{align}
Hence
\begin{align}
&{\bar{ev}}\circ S({T}^{(r+1)+}_{q, p}) =
(-1)^{r+1}\sum_{i_r = 1}^{n\over l}
e_{p,i_r}({\bar{ev}}\circ S)({T}^{(r)+}_{q, i_r})(-1)^r +
f_{p,i_r}({\bar{ev}}\circ S)({T}^{(r)+}_{q, i_r}) = \nonumber\\
&(-1)^{r+1}\sum_{i_r = 1}^{n\over l}
(e_{p,i_r} + (-1)^r f_{p,i_r})((-1)^rU({T}^{(r)+}_{q, i_r})
 = \nonumber\\
 &(-1)^{r+1}\sum_{i_r = 1}^{n\over l}
(e_{p,i_r} + (-1)^r f_{p,i_r})
 \sum_{1\leq p_1, \ldots, p_{r-1}\leq {n\over l}}
 (e_{i_r, p_1} + (-1)^{r-1} f_{i_r,p_1})\ldots
 (e_{p_{r-1},q} + f_{p_{r-1},q}) = U({T}^{(r+1)+}_{q, p}).
 \nonumber
\end{align}
\end{proof}

\noindent
It follows from Lemma \ref{Lem7} that
$$\bar{ev}^{\otimes k}\circ S^{\otimes k}\otimes \Delta^{op}_k ({T}^{(r)+}_{q, p}) =
U^{\otimes k}\circ \Delta^{op}_k ({T}^{(r)+}_{q, p}).$$
Finally, observe that the following diagram, where $Y: = Y(Q({n\over l}))$  is commutative:
$$\begin{CD}
Y @>\Delta>> Y\otimes Y @>id\circ\Delta>>  Y\otimes Y\otimes Y @>id\circ id\circ\Delta>>\ldots\\
@A{S}AA @A{S\otimes S}AA @A{S\otimes S\otimes S}AA@A{S^{\otimes 4}}AA  \\
Y @>\Delta^{op}>>Y\otimes Y @>\Delta^{op}\circ id>>Y\otimes Y\otimes Y
@>\Delta^{op}\circ id\circ id>>\ldots
\end{CD}
$$
Hence
\begin{equation}\label{last}
\bar{ev}^{\otimes k}\circ  \Delta_k\circ S
({T}^{(r)+}_{q, p}) =
U^{\otimes k}\circ \Delta^{op}_k ({T}^{(r)+}_{q, p}).
\end{equation}

\noindent
This completes the proof of Theorem \ref{ton}.
\end{proof}
\begin{corollary}\label{COR}
$$W_{\chi}\cong \bar{ev}^{\otimes l}\circ \Delta_l (Y(Q({n\over l})))$$
\end{corollary}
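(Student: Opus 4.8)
The plan is to obtain Corollary \ref{COR} as a formal consequence of Theorem \ref{T1} and Theorem \ref{ton}, so that no new computation is needed. First I would record, from Corollary \ref{corol} together with the injectivity of the Harish-Chandra homomorphism $\vartheta\colon W_\chi\hookrightarrow U(\g_0)$ established in \cite{PS2}, that under the identification $U(\g_0)\cong U(Q({n\over l}))^{\otimes l}$ of Lemma \ref{L4} the image $\vartheta(W_\chi)$ equals $U^{\otimes l}\circ\Delta_l^{op}(Y(Q({n\over l})))$; since $\vartheta$ is an isomorphism onto its image, this is exactly the content of Theorem \ref{T1}, namely $W_\chi\cong U^{\otimes l}\circ\Delta_l^{op}(Y(Q({n\over l})))$ as associative superalgebras.

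Next I would invoke Theorem \ref{ton} with $k=l$, which asserts that the subspaces $U^{\otimes l}\circ\Delta_l^{op}(Y(Q({n\over l})))$ and $\bar{ev}^{\otimes l}\circ\Delta_l(Y(Q({n\over l})))$ of $U(Q({n\over l}))^{\otimes l}$ coincide. Concatenating the two identifications gives $W_\chi\cong\bar{ev}^{\otimes l}\circ\Delta_l(Y(Q({n\over l})))$, which is the assertion of Corollary \ref{COR}; in particular the right-hand side is automatically a subalgebra, being literally the same subspace as the image $U^{\otimes l}\circ\Delta_l^{op}(Y(Q({n\over l})))$ of an algebra homomorphism.

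Thus the whole substance of the argument sits inside Theorem \ref{ton}, whose proof reduces the equality of images to equation (\ref{last}), i.e.\ to the identity $\bar{ev}^{\otimes l}\circ\Delta_l\circ S=U^{\otimes l}\circ\Delta_l^{op}$ obtained by combining Lemma \ref{Lem7} (the identity $\bar{ev}\circ S=U$ on the generators ${T}^{(r)+}_{q,p}$), the antipode--coproduct compatibility $\Delta_k\circ S=S^{\otimes k}\circ\Delta_k^{op}$ read off from the commutative diagram, and the bijectivity of $S$, which gives $\bar{ev}^{\otimes l}\circ\Delta_l(S(Y))=\bar{ev}^{\otimes l}\circ\Delta_l(Y)$. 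If I were to tie off the remaining loose end I would observe that $\bar{ev}^{\otimes l}\circ\Delta_l\circ S$ and $U^{\otimes l}\circ\Delta_l^{op}$ are both algebra homomorphisms, so it is enough that they agree on an algebra generating set: Lemma \ref{Lem7} handles ${T}^{(r)+}_{q,p}$, the same computation applies, up to signs, to ${T}^{(r)-}_{q,p}$, and $\{{T}^{(r)+}_{q,p},{T}^{(r)-}_{q,p}\}$ generates $Y(Q({n\over l}))$. The only mildly delicate point I expect is the Koszul-sign bookkeeping when rewriting $\bar{ev}^{\otimes l}\circ S^{\otimes l}$ as $(\bar{ev}\circ S)^{\otimes l}$ in the super-tensor power; past that, Corollary \ref{COR} is immediate.
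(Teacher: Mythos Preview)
Your proposal is correct and follows essentially the same route as the paper: the paper's proof of Corollary \ref{COR} is the single line ``Follows from Theorem \ref{T1} and (\ref{kl}) where $k=l$,'' i.e.\ exactly the concatenation of Theorem \ref{T1} with Theorem \ref{ton} that you describe. Your additional paragraph unpacking the mechanism behind Theorem \ref{ton} (Lemma \ref{Lem7}, the antipode--coproduct compatibility, and bijectivity of $S$ to pass from $\Delta_l\circ S$ to $\Delta_l$ on images) is accurate and helpful, but goes beyond what is needed for the corollary itself.
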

\begin{proof}
Follows from Theorem \ref{T1} and (\ref{kl}) where $k = l$.
\end{proof}
\begin{theorem}\label{TTTT}
 Let $ 1\leq p, q\leq {n\over l}$.
\begin{align}
&\hbox{ev}^{\otimes l}\circ \Delta_l(T_{q,p}^{(r)})
= (-1)^r[\sum_{1\leq p_1, p_2, \ldots, p_{r-1}\leq{n\over l}} \nonumber\\
&\sum_{1\leq i_1 < i_2 <\ldots < i_r\leq l}
(x^{i_1}_{p, p_1}  + (-1)^{r+1}\xi^{i_1}_{p, p_1})
(x^{i_2}_{p_1, p_2} + (-1)^{r}\xi^{i_2}_{p_1, p_2})
\ldots
(x^{i_{r}}_{p_{r-1}, q} + \xi^{i_{r}}_{p_{r-1}, q})]_{even},\nonumber\\
&\hbox{ev}^{\otimes l}\circ \Delta_l(T_{-q,p}^{(r)})
= (-1)^r[\sum_{1\leq p_1, p_2, \ldots, p_{r-1}\leq{n\over l}} \nonumber\\
&\sum_{1\leq i_1 < i_2 <\ldots < i_r\leq l}
(x^{i_1}_{p, p_1}  + (-1)^{r+1}\xi^{i_1}_{p, p_1})
(x^{i_2}_{p_1, p_2} + (-1)^{r}\xi^{i_2}_{p_1, p_2})
\ldots
(x^{i_{r}}_{p_{r-1}, q} + \xi^{i_{r}}_{p_{r-1}, q})]_{odd}
\nonumber
\end{align}
\end{theorem}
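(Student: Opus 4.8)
The plan is to compute $\hbox{ev}^{\otimes l}\circ\Delta_l$ on the combinations $T_{q,p}^{(r)\pm}:=T_{q,p}^{(r)}\pm T_{-q,p}^{(r)}$ used in the proof of Theorem \ref{T4}, and then to read off the two assertions as the even and odd components. The reduction is immediate: $\Delta_l$ and $\hbox{ev}^{\otimes l}$ are homomorphisms of superalgebras, so $\hbox{ev}^{\otimes l}\circ\Delta_l$ preserves parity, and since $T_{q,p}^{(r)}$ is even while $T_{-q,p}^{(r)}$ is odd, $\hbox{ev}^{\otimes l}\circ\Delta_l(T_{q,p}^{(r)})$ and $\hbox{ev}^{\otimes l}\circ\Delta_l(T_{-q,p}^{(r)})$ are exactly the even and odd parts of $\hbox{ev}^{\otimes l}\circ\Delta_l(T_{q,p}^{(r)+})$. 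Thus it suffices to prove that $\hbox{ev}^{\otimes l}\circ\Delta_l(T_{q,p}^{(r)+})$ equals the bracketed expression in the statement (without the subscript $even$), and, simultaneously, that $\hbox{ev}^{\otimes l}\circ\Delta_l(T_{q,p}^{(r)-})$ equals the same expression with every $\xi$ replaced by $-\xi$. I would prove these two identities together by induction on $l$, imitating the proof of Theorem \ref{T4} but with $\Delta_l^{op}$ replaced by $\Delta_l$ and the homomorphism $U$ replaced by $\hbox{ev}$.

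The analogue of the recursion (\ref{X1}) is obtained from $\Delta_l=(\Delta_{l-1}\otimes\hbox{id})\circ\Delta$ (valid by coassociativity, and the mirror image of the decomposition used for $\Delta_l^{op}$), the coproduct formula (\ref{delta}), and the symmetry (\ref{equY5}) used to re-index the negative summation indices; it reads
\begin{align}
\Delta_l(T_{q,p}^{(r)\pm})=\sum_{s=0}^{r}\sum_{k=1}^{n\over l}\Big(\Delta_{l-1}(T_{q,k}^{(s)\pm})\otimes T_{k,p}^{(r-s)}\ \pm\ (-1)^s\,\Delta_{l-1}(T_{q,k}^{(s)\mp})\otimes T_{-k,p}^{(r-s)}\Big).\nonumber
\end{align}
In place of the nontrivial formulas (\ref{X2})--(\ref{X3}) for $U(T_{q,p}^{(r)\pm})$ we now have only the trivial fact, read off from (\ref{eval}), that $\hbox{ev}$ annihilates $T_{k,p}^{(m)}$ and $T_{-k,p}^{(m)}$ for all $m\geq 2$, while $\hbox{ev}(T_{k,p}^{(1)})=-e_{p,k}$, $\hbox{ev}(T_{-k,p}^{(1)})=-f_{p,k}$, $\hbox{ev}(T_{k,p}^{(0)})=\delta_{k,p}$ and $\hbox{ev}(T_{-k,p}^{(0)})=0$. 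Hence $\hbox{ev}^{\otimes l}=\hbox{ev}^{\otimes(l-1)}\otimes\hbox{ev}$ kills every summand with $r-s\geq 2$, and only $s=r$ and $s=r-1$ survive: the term $s=r$ contributes $\big(\hbox{ev}^{\otimes(l-1)}\circ\Delta_{l-1}(T_{q,p}^{(r)\pm})\big)\otimes 1$, and the term $s=r-1$ contributes $\sum_k\big(\hbox{ev}^{\otimes(l-1)}\circ\Delta_{l-1}(T_{q,k}^{(r-1)\pm})\big)\otimes(-e_{p,k})\ \pm\ (-1)^{r-1}\sum_k\big(\hbox{ev}^{\otimes(l-1)}\circ\Delta_{l-1}(T_{q,k}^{(r-1)\mp})\big)\otimes(-f_{p,k})$.

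It then remains to verify that the bracketed expression satisfies the same recursion (the case $l=1$ being immediate from (\ref{eval})). Using the identification of Lemma \ref{L4} between the $l$ tensor slots and the superscripts $i$, I would split the sum over $r$-element subsets $\{i_1<\dots<i_r\}$ of $\{1,\dots,l\}$ according to whether the end slot $i=1$ — the one carried by the split-off last tensor factor — is used. If $i_1\geq 2$, the shift $i_t\mapsto i_t-1$ turns this part into the inductive formula for $\hbox{ev}^{\otimes(l-1)}\circ\Delta_{l-1}(T_{q,p}^{(r)\pm})$ tensored with $1$. If $i_1=1$, the first factor $x^1_{p,p_1}+(-1)^{r+1}\xi^1_{p,p_1}$ of the product is peeled into that slot; because it is an end slot, moving its odd summand (the one involving $\xi^1_{p,p_1}$) past the remaining $l-1$ slots produces the Koszul sign that flips the sign of every $\xi$ occurring there — precisely the sign already manipulated in the proof of Lemma \ref{L4} — so the $\xi^1$-part reproduces the inductive formula for a $T_{q,p_1}^{(r-1)\mp}$ and the $x^1$-part that for a $T_{q,p_1}^{(r-1)\pm}$, matching the two $s=r-1$ terms above. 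The one structural novelty over Theorem \ref{T4} is that $\hbox{ev}$ kills $T^{(m)}$ for all $m\geq 2$, so each tensor slot can absorb at most one unit of degree; this is exactly what replaces the weakly decreasing range $l\geq i_1\geq\dots\geq i_r\geq 1$ of Theorem \ref{T4} by the strictly increasing range $1\leq i_1<\dots<i_r\leq l$ here (the reversal of direction being the usual effect of passing from $\Delta_l^{op}$ to $\Delta_l$). I expect the main obstacle to be purely the bookkeeping — the signs $(-1)^s$, $(-1)^{r-t}$, $(-1)^{p(k)}$ together with the Koszul signs — which is the verbatim counterpart of the corresponding verification in the proof of Theorem \ref{T4} and involves no new idea.
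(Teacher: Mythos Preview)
Your proposal is correct and follows essentially the same route as the paper's proof: both argue by induction on the number of tensor factors, use the decomposition $\Delta_l=(\Delta_{l-1}\otimes\hbox{id})\circ\Delta$ together with (\ref{equY5}) to obtain a recursion for $\Delta_l(T_{q,p}^{(r)+})$, apply $\hbox{ev}$ to the last slot so that only $s\in\{r-1,r\}$ survive, and then match this with the split of the target expression according to whether $i_1=1$ or $i_1\ge 2$. The only cosmetic difference is that you keep $T_{q,p}^{(r)+}$ and $T_{q,p}^{(r)-}$ separate throughout, which makes the Koszul signs explicit, whereas the paper packages both into the single product $\big(1^{\otimes m-1}\otimes(T_{k,p}^{(r-s)}+(-1)^sT_{-k,p}^{(r-s)})\big)\cdot\big(\Delta_{m-1}(T_{q,k}^{(s)+})\otimes 1\big)$ and lets the super tensor product supply the sign that flips $+$ to $-$; the two bookkeepings are equivalent.
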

\begin{proof}
According to (\ref{delta}), if $1\leq m\leq l$, then
\begin{align}
&\Delta_m(T_{q,p}^{(r)})=\sum_{s=0}^r\sum_{k=1}^{n\over l} (-1)^{(p(q)+p(k))(p(p)+p(k))}
\Delta_{m-1}(T_{q,k}^{(s)})\otimes T_{k,p}^{(r-s)} = \nonumber\\
&\sum_{s=0}^r\sum_{k=1}^{n\over l}\Big(1^{\otimes m-1}\otimes T_{k,p}^{(r-s)}\Big)\cdot
\Big(\Delta_{m-1}(T_{q,k}^{(s)})\otimes 1\Big).
\nonumber
\end{align}
Then
\begin{align}
&\Delta_m(T_{q,p}^{(r)+}) = \sum_{s=0}^r\sum_{k=1}^{n\over l}\Big(1^{\otimes m-1}\otimes (T_{k,p}^{(r-s)} + (-1)^sT_{-k,p}^{(r-s)})\Big)\cdot
\Big(\Delta_{m-1}(T_{q,k}^{(s)} + T_{-q,k}^{(s)})\otimes 1 \Big).
\nonumber
\end{align}
Hence, using induction on $m$ we have that
\begin{align}\label{MMMM}
&{ev}^{\otimes m}\circ \Delta_m(T_{q,p}^{(r)+}) = \nonumber\\
&\sum_{s=r-1,r}\sum_{k=1}^{n\over l}
\Big(1^{\otimes m-1}\otimes {ev}(T_{k,p}^{(r-s)} + (-1)^sT_{-k,p}^{(r-s)})\Big)\cdot
\Big({ev}^{\otimes m-1}(\Delta_{m-1}(T_{q,k}^{(s)+}))\otimes 1 \Big)
 = \nonumber\\
 &(-1)^r[\sum_{k=1}^{n\over l}
(x^{1}_{p, k}  + (-1)^{r-1}\xi^{1}_{p, k})
\Big(\sum_{1\leq p_2, \ldots, p_{r-1}\leq{n\over l}}
(\sum_{2\leq i_2 < \ldots < i_r\leq m}
(x^{i_2}_{k, p_2}  + (-1)^{r}\xi^{i_2}_{k, p_2})
\ldots
(x^{i_{r}}_{p_{r-1}, q} + \xi^{i_{r}}_{p_{r-1}, q}))\Big) + \nonumber\\
&\sum_{1\leq p_1, p_2, \ldots, p_{r-1}\leq{n\over l}}
\Big(\sum_{2\leq i_1 < \ldots < i_r\leq m}
(x^{i_1}_{p, p_1}  + (-1)^{r+1}\xi^{i_1}_{p, p_1})
(x^{i_2}_{p_1, p_2} + (-1)^{r}\xi^{i_2}_{p_1, p_2})
\ldots
(x^{i_{r}}_{p_{r-1}, q} + \xi^{i_{r}}_{p_{r-1}, q})\Big)] = \nonumber\\
&(-1)^r\sum_{1\leq p_1, p_2, \ldots, p_{r-1}\leq{n\over l}}
\Big(\sum_{1\leq i_1 < i_2 <\ldots < i_r\leq m}
(x^{i_1}_{p, p_1}  + (-1)^{r+1}\xi^{i_1}_{p, p_1})
\ldots
(x^{i_{r}}_{p_{r-1}, q} + \xi^{i_{r}}_{p_{r-1}, q})\Big).
\nonumber
\end{align}
If $m = l$ we obtain the proof of Theorem \ref{TTTT}.
\end{proof}

\begin{corollary}\label{COR1}
Let $ 1\leq p, q\leq {n\over l}$. Then for $r\leq l$
\begin{align}
&\bar{ev}^{\otimes l}\circ \Delta_l(T_{q,p}^{(r)})
= [\sum_{1\leq p_1, p_2, \ldots, p_{r-1}\leq{n\over l}} \nonumber\\
&\sum_{1\leq i_1 < i_2 <\ldots < i_r\leq l}
(x^{i_1}_{p, p_1}  + (-1)^{r+1}\xi^{i_1}_{p, p_1})
(x^{i_2}_{p_1, p_2} + (-1)^{r}\xi^{i_2}_{p_1, p_2})
\ldots
(x^{i_{r}}_{p_{r-1}, q} + \xi^{i_{r}}_{p_{r-1}, q})]_{even},\nonumber\\
&\bar{ev}^{\otimes l}\circ \Delta_l(T_{-q,p}^{(r)})
= [\sum_{1\leq p_1, p_2, \ldots, p_{r-1}\leq{n\over l}} \nonumber\\
&\sum_{1\leq i_1 < i_2 <\ldots < i_r\leq l}
(x^{i_1}_{p, p_1}  + (-1)^{r+1}\xi^{i_1}_{p, p_1})
(x^{i_2}_{p_1, p_2} + (-1)^{r}\xi^{i_2}_{p_1, p_2})
\ldots
(x^{i_{r}}_{p_{r-1}, q} + \xi^{i_{r}}_{p_{r-1}, q})]_{odd}
\nonumber
\end{align}
and $\bar{ev}^{\otimes l}\circ \Delta_l(T_{\pm q,p}^{(r)}) = 0$
for $r > l$.
\end{corollary}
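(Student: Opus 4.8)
The plan is to deduce Corollary~\ref{COR1} directly from Theorem~\ref{TTTT}, using the identity $\bar{ev}=\alpha\circ ev$, where $\alpha$ denotes the principal anti-automorphism of $U(Q({n\over l}))$. Since tensoring commutes with composition of linear maps, $\bar{ev}^{\otimes l}=\alpha^{\otimes l}\circ ev^{\otimes l}$ as maps $Y(Q({n\over l}))^{\otimes l}\to U(Q({n\over l}))^{\otimes l}$, and hence $\bar{ev}^{\otimes l}\circ\Delta_l(T_{\pm q,p}^{(r)})=\alpha^{\otimes l}\bigl(ev^{\otimes l}\circ\Delta_l(T_{\pm q,p}^{(r)})\bigr)$. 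When $r>l$ the index set $\{1\le i_1<i_2<\cdots<i_r\le l\}$ is empty, so by Theorem~\ref{TTTT} the argument of $\alpha^{\otimes l}$ is zero and $\bar{ev}^{\otimes l}\circ\Delta_l(T_{\pm q,p}^{(r)})=0$; this settles the last assertion.

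For $r\le l$ it remains to trace the action of $\alpha^{\otimes l}$ through the formula of Theorem~\ref{TTTT}. Under the identification of Lemma~\ref{L4}, $\alpha^{\otimes l}$ restricted to $U(\g_0)$ is the principal anti-automorphism of $U(\g_0)$: both are anti-automorphisms and both send $x^i_{p,q}\mapsto -x^i_{p,q}$ and $\xi^i_{p,q}\mapsto -\xi^i_{p,q}$, hence they agree on generators. Now expand each summand in the formula of Theorem~\ref{TTTT} into monomials; a typical monomial is a product $Y_1Y_2\cdots Y_r$ in which $Y_a$ is a generator lying in the copy of $Q({n\over l})$ spanned by $\{x^{i_a}_{p',q'},\xi^{i_a}_{p',q'}\}$, where $i_1<i_2<\cdots<i_r$. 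Because the indices $i_a$ are pairwise distinct, the factors $Y_1,\dots,Y_r$ are supported in pairwise distinct tensor slots and so pairwise super-commute. Applying the principal anti-automorphism, $\alpha(Y_1\cdots Y_r)=(-1)^r Y_1\cdots Y_r$: the $r$ sign changes $Y_a\mapsto -Y_a$ accumulate to $(-1)^r$, while the super-signs produced by reversing the order cancel against the super-signs needed to restore it (here one uses that the $Y_a$ super-commute). Since $\alpha^{\otimes l}$ preserves the $\Z_2$-grading, it commutes with the projections onto even and odd components.

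Consequently $\alpha^{\otimes l}$ sends $ev^{\otimes l}\circ\Delta_l(T_{q,p}^{(r)})=(-1)^r[\,\cdots\,]_{even}$ to $(-1)^r\cdot(-1)^r[\,\cdots\,]_{even}=[\,\cdots\,]_{even}$, which is the first formula of Corollary~\ref{COR1}, and the same computation with the odd component yields the formula for $T_{-q,p}^{(r)}$. The only point requiring care is the super-sign bookkeeping in the previous paragraph — verifying that $\alpha^{\otimes l}|_{U(\g_0)}$ coincides with the principal anti-automorphism and that it multiplies each monomial built from pairwise super-commuting generators by exactly $(-1)^r$; once that is in place, the rest is a direct substitution into Theorem~\ref{TTTT}, with no further input about $W_\chi$ or the Yangian relations required.
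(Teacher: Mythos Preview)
Your proof is correct and follows the same approach as the paper: write $\bar{ev}^{\otimes l}\circ\Delta_l=\alpha^{\otimes l}\circ ev^{\otimes l}\circ\Delta_l$ and then observe that $\alpha^{\otimes l}$ acts on the expression from Theorem~\ref{TTTT} as multiplication by $(-1)^r$. The paper's proof states this last fact without justification, whereas you supply the argument (distinct tensor slots, hence pairwise super-commuting factors, so the anti-automorphism reduces to $r$ sign changes); this extra care is welcome but not a different route.
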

\begin{proof}
Recall that $\bar{ev} = \alpha\circ ev$, where $\alpha$ is the principal anti-automorphism of $U(\g)$: $\alpha(X) = -X$ for all $X\in \g$.
Then
\begin{align}
&\bar{ev}^{\otimes l}\circ \Delta_l(T_{q,p}^{(r)})
= \alpha^{\otimes l}\circ {ev}^{\otimes l}\circ \Delta_l(T_{q,p}^{(r)})
= (-1)^r {ev}^{\otimes l}\circ \Delta_l(T_{q,p}^{(r)}),
\nonumber\\
&\bar{ev}^{\otimes l}\circ \Delta_l(T_{-q,p}^{(r)})
= \alpha^{\otimes l}\circ {ev}^{\otimes l}\circ \Delta_l(T_{-q,p}^{(r)})
= (-1)^r {ev}^{\otimes l}\circ \Delta_l(T_{-q,p}^{(r)}).
\nonumber
\end{align}
\end{proof}

\begin{corollary}\label{COR2}
$$W_{\chi}\cong {ev}^{\otimes l}\circ \Delta_l (Y(Q({n\over l})))$$
\end{corollary}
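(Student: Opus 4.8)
The plan is to read Corollary~\ref{COR2} off directly from the identifications already established in this section, so that essentially no new computation is required. Theorem~\ref{T1} realizes the finite $W$-algebra as the subalgebra $W_\chi\cong U^{\otimes l}\circ\Delta_l^{op}(Y(Q({n\over l})))$ of $U(Q({n\over l}))^{\otimes l}\cong U(\g_0)$, and Theorem~\ref{ton}, in the special case $k=l$ of equation~(\ref{kl}), asserts that this subalgebra coincides, inside $U(Q({n\over l}))^{\otimes l}$, with $ev^{\otimes l}\circ\Delta_l(Y(Q({n\over l})))$. Composing the two statements yields $W_\chi\cong ev^{\otimes l}\circ\Delta_l(Y(Q({n\over l})))$, which is the claim.

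An alternative route, which I would actually spell out since it is self-contained given the explicit formulas of this section, starts from Corollary~\ref{COR} (which gives $W_\chi\cong\bar{ev}^{\otimes l}\circ\Delta_l(Y(Q({n\over l})))$) and reduces the statement to checking that the two subalgebras $\bar{ev}^{\otimes l}\circ\Delta_l(Y(Q({n\over l})))$ and $ev^{\otimes l}\circ\Delta_l(Y(Q({n\over l})))$ of $U(Q({n\over l}))^{\otimes l}$ are equal. The elements $T_{q,p}^{(r)}$ and $T_{-q,p}^{(r)}$, for $1\le p,q\le{n\over l}$ and $r\ge 1$, generate $Y(Q({n\over l}))$ as an algebra, so each of the two images is the subalgebra generated by the images of these elements. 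By Corollary~\ref{COR1}, for every $r\le l$ one has $\bar{ev}^{\otimes l}\circ\Delta_l(T_{\pm q,p}^{(r)})=(-1)^r\,ev^{\otimes l}\circ\Delta_l(T_{\pm q,p}^{(r)})$, while for $r>l$ both sides vanish; since corresponding generators differ only by the invertible scalar $(-1)^r$, the subalgebras they generate are the same subset of $U(Q({n\over l}))^{\otimes l}$, and the corollary follows.

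I do not expect any genuine obstacle here: the content is carried entirely by Theorem~\ref{T1}, Theorem~\ref{ton} and Corollary~\ref{COR1}. The only point that needs to be stated with a little care is that replacing $\bar{ev}=\alpha\circ ev$ by $ev$ scales each generator $T_{\pm q,p}^{(r)}$ exactly by $(-1)^r$; this holds because the principal anti-automorphism $\alpha$ of $U(\g_0)$ acts by $-1$ on each degree-one generator $x_{p,q}^i$ and $\xi_{p,q}^i$, hence by $(-1)^r$ on every degree-$r$ monomial occurring in the expansion of $ev^{\otimes l}\circ\Delta_l(T_{\pm q,p}^{(r)})$ furnished by Theorem~\ref{TTTT} --- which is precisely the computation already recorded in the proof of Corollary~\ref{COR1}.
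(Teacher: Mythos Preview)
Your proposal is correct, and in fact your ``alternative route''---starting from Corollary~\ref{COR} and using Corollary~\ref{COR1} to pass from $\bar{ev}$ to $ev$ by the scalar $(-1)^r$ on each generator---is precisely the paper's own proof. Your first route, combining Theorem~\ref{T1} directly with Theorem~\ref{ton} at $k=l$, is also valid and is even a shade more direct, since the statement of Theorem~\ref{ton} already records the equality of images with $ev$ rather than $\bar{ev}$.
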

\begin{proof}
Follows from Corollary \ref{COR} and Corollary \ref{COR1}.
\end{proof}

\begin{definition}
Let $ 1\leq p, q\leq {n\over l}$ and $r>0$. Let
\begin{align}
&{z}_{q,p}^{(r)}
= [\sum_{1\leq p_1, p_2, \ldots, p_{r-1}\leq{n\over l}} \nonumber\\
&\sum_{l\geq i_1\geq i_2\geq\ldots \geq i_r\geq 1}
(x^{i_1}_{p, p_1}  + (-1)^{r+1}\xi^{i_1}_{p, p_1})
(x^{i_2}_{p_1, p_2} + (-1)^{r}\xi^{i_2}_{p_1, p_2})
\ldots
(x^{i_{r}}_{p_{r-1}, q} + \xi^{i_{r}}_{p_{r-1}, q})]_{even},\nonumber\\
&{z}_{-q,p}^{(r)}
= [\sum_{1\leq p_1, p_2, \ldots, p_{r-1}\leq{n\over l}} \nonumber\\
&\sum_{l\geq i_1\geq i_2\geq\ldots \geq i_r\geq 1}
(x^{i_1}_{p, p_1}  + (-1)^{r+1}\xi^{i_1}_{p, p_1})
(x^{i_2}_{p_1, p_2} + (-1)^{r}\xi^{i_2}_{p_1, p_2})
\ldots
(x^{i_{r}}_{p_{r-1}, q} + \xi^{i_{r}}_{p_{r-1}, q})]_{odd}
\nonumber
\end{align}
\begin{align}
&\tilde{z}_{q,p}^{(r)}
= [\sum_{1\leq p_1, p_2, \ldots, p_{r-1}\leq{n\over l}} \nonumber\\
&\sum_{1\leq i_1 < i_2 <\ldots < i_r\leq l}
(x^{i_1}_{p, p_1}  + (-1)^{r+1}\xi^{i_1}_{p, p_1})
(x^{i_2}_{p_1, p_2} + (-1)^{r}\xi^{i_2}_{p_1, p_2})
\ldots
(x^{i_{r}}_{p_{r-1}, q} + \xi^{i_{r}}_{p_{r-1}, q})]_{even},\nonumber\\
&\tilde{z}_{-q,p}^{(r)}
= [\sum_{1\leq p_1, p_2, \ldots, p_{r-1}\leq{n\over l}} \nonumber\\
&\sum_{1\leq i_1 < i_2 <\ldots < i_r\leq l}
(x^{i_1}_{p, p_1}  + (-1)^{r+1}\xi^{i_1}_{p, p_1})
(x^{i_2}_{p_1, p_2} + (-1)^{r}\xi^{i_2}_{p_1, p_2})
\ldots
(x^{i_{r}}_{p_{r-1}, q} + \xi^{i_{r}}_{p_{r-1}, q})]_{odd}
\nonumber
\end{align}
\end{definition}

\begin{theorem}\label{relth}
Let $ 1\leq p, q\leq {n\over l}$ and $r>0$. Then
\begin{equation}\label{rel}
\sum_{t+s = r}[\sum_{j>0}
\Big((-1)^s{z}_{j,p}^{(s)} + (-1)^r{z}_{-j,p}^{(s)}\Big)
\Big(\tilde{z}_{q,j}^{(t)} + \tilde{z}_{-q,j}^{(t)}\Big)] = 0.
\end{equation}
\end{theorem}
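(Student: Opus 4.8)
The plan is to deduce (\ref{rel}) by pushing the tautological identity $T(u)\,T(u)^{-1}=I$ in $\hbox{End}(\C^{n/l|n/l})\otimes Y(Q({n\over l}))[[u^{-1}]]$ forward through the map $\Psi:=\bar{ev}^{\otimes l}\circ\Delta_l\colon Y(Q({n\over l}))\to U(Q({n\over l}))^{\otimes l}$, where $T(u)^{-1}$ is as in (\ref{tilda}) and $\tilde T_{i,j}(u)=S(T_{i,j}(u))$. First I would write down the $(q,p)$ and $(q,-p)$ matrix entries of this identity for $1\leq p,q\leq {n\over l}$; evaluating the Koszul sign $(-1)^{(p(q)+p(k))(p(k)+p(p'))}$ of the $k$-th term — which for $q,p>0$ equals $(-1)^{p(k)}$ when $p'=p$ and $+1$ when $p'=-p$ — these entries read
\begin{align}
&\sum_{|k|>0}\big(T_{q,|k|}(u)S(T_{|k|,p}(u))-T_{q,-|k|}(u)S(T_{-|k|,p}(u))\big)=\delta_{q,p},\nonumber\\
&\sum_{|k|>0}\big(T_{q,|k|}(u)S(T_{|k|,-p}(u))+T_{q,-|k|}(u)S(T_{-|k|,-p}(u))\big)=0,\nonumber
\end{align}
the sums running over $|k|=1,\ldots,{n\over l}$. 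Taking the coefficient of $u^{-r}$ for $r>0$ (the right-hand sides then drop out) and using (\ref{equY5}) together with the linearity of $S$ to normalize the $S(T)$-factors that carry a negative second index (so $S(T^{(t)}_{|k|,-p})=(-1)^tS(T^{(t)}_{-|k|,p})$ and $S(T^{(t)}_{-|k|,-p})=(-1)^tS(T^{(t)}_{|k|,p})$), I obtain the two families of relations in $Y(Q({n\over l}))$
\begin{align}
&\sum_{|k|>0}\sum_{s+t=r}\big(T^{(s)}_{q,|k|}S(T^{(t)}_{|k|,p})-T^{(s)}_{q,-|k|}S(T^{(t)}_{-|k|,p})\big)=0,\nonumber\\
&\sum_{|k|>0}\sum_{s+t=r}(-1)^t\big(T^{(s)}_{q,|k|}S(T^{(t)}_{-|k|,p})+T^{(s)}_{q,-|k|}S(T^{(t)}_{|k|,p})\big)=0.\nonumber
\end{align}

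Next I would apply $\Psi$ to both. Since $\Delta_l$ is an algebra homomorphism and $\bar{ev}^{\otimes l}$ is a parity-preserving anti-homomorphism, $\Psi$ is an anti-homomorphism, so $\Psi(XY)=(-1)^{p(X)p(Y)}\Psi(Y)\Psi(X)$; here the parities $p(T^{(s)}_{q,\pm|k|})$ and $p(S(T^{(t)}_{\pm|k|,p}))$ are $0$ in the ``$+$'' cases and $1$ in the ``$-$'' cases, so every resulting sign is explicit. To identify the images I would use: Corollary \ref{COR1}, giving $\Psi(T^{(t)}_{q,|k|})=\tilde z^{(t)}_{q,|k|}$ (both sides being $0$ for $t>l$); (\ref{equY5}), giving $\Psi(T^{(s)}_{q,-|k|})=(-1)^s\Psi(T^{(s)}_{-q,|k|})=(-1)^s\tilde z^{(s)}_{-q,|k|}$; and the identity $\Psi(S(T^{(s)}_{\pm|k|,p}))=(-1)^sz^{(s)}_{\pm|k|,p}$. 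The last is the crucial point: by (\ref{last}) with $k=l$ and Theorem \ref{T4},
$$\Psi\big(S(T^{(s)}_{|k|,p}+T^{(s)}_{-|k|,p})\big)=U^{\otimes l}\circ\Delta^{op}_l(T^{(s)}_{|k|,p}+T^{(s)}_{-|k|,p})=(-1)^s\big(z^{(s)}_{|k|,p}+z^{(s)}_{-|k|,p}\big),$$
and since $\Psi\circ S$ preserves parity while $z^{(s)}_{|k|,p}$ and $z^{(s)}_{-|k|,p}$ are precisely the even and odd components of the right-hand side, this equation splits into the two claimed componentwise identities. Substituting all of this into the two relations above and relabelling $s\leftrightarrow t$, they become (writing $j=|k|$)
\begin{align}
&\sum_{j>0}\sum_{s+t=r}\big((-1)^sz^{(s)}_{j,p}\tilde z^{(t)}_{q,j}+(-1)^rz^{(s)}_{-j,p}\tilde z^{(t)}_{-q,j}\big)=0,\nonumber\\
&\sum_{j>0}\sum_{s+t=r}\big(z^{(s)}_{-j,p}\tilde z^{(t)}_{q,j}+(-1)^tz^{(s)}_{j,p}\tilde z^{(t)}_{-q,j}\big)=0,\nonumber
\end{align}
where $(-1)^{s+t}=(-1)^r$ has been used in the first.

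Finally, I would multiply the second identity by $(-1)^r$ and use $(-1)^{r+t}=(-1)^s$ to recast its second term as $(-1)^sz^{(s)}_{j,p}\tilde z^{(t)}_{-q,j}$; adding the result to the first identity collapses the four monomials into
$$\sum_{s+t=r}\sum_{j>0}\Big((-1)^sz^{(s)}_{j,p}+(-1)^rz^{(s)}_{-j,p}\Big)\Big(\tilde z^{(t)}_{q,j}+\tilde z^{(t)}_{-q,j}\Big)=0,$$
which is exactly (\ref{rel}). Note that this is a genuine identity in $U(\g_0)\cong U(Q({n\over l}))^{\otimes l}$, so the injectivity of the Harish-Chandra homomorphism plays no role here.

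The one genuinely delicate part is the sign bookkeeping: the Koszul signs produced by $\Psi$ being an anti-homomorphism, the factors $(-1)^m$ arising from (\ref{equY5}) (and hence for $S(T^{(m)}_{i,j})$) whenever an index sign is reversed, and the $(-1)^r$ relating $z^{(r)}_{\cdot,\cdot}$ to $U^{\otimes l}\circ\Delta^{op}_l(T^{(r)}_{\cdot,\cdot})$. In particular one must verify that the $(q,p)$ and $(q,-p)$ entries combine with exactly the relative sign $(-1)^r$ — this is what makes the four terms $z^{(s)}_{\pm j,p}\tilde z^{(t)}_{\pm q,j}$ assemble into the product form of (\ref{rel}).
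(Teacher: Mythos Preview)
Your proof is correct and follows essentially the same route as the paper: push $T(u)T(u)^{-1}=I$ through the anti-homomorphism $\bar{ev}^{\otimes l}\circ\Delta_l$, use (\ref{last}) to convert the $S$-factor into $U^{\otimes l}\circ\Delta_l^{op}$, and then identify the two factors via Theorem \ref{T4} and Corollary \ref{COR1}. The only cosmetic differences are that you combine the $(q,p)$ and $(q,-p)$ matrix entries whereas the paper combines $(q,p)$ and $(-q,p)$, and you spell out the parity-splitting argument that lets (\ref{last}) be applied componentwise to $T^{(s)}_{\pm|k|,p}$ rather than only to $T^{(s)+}_{|k|,p}$---a step the paper leaves implicit.
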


\begin{proof}
Note that by (\ref{tilda})
\begin{equation}\label{rel1}
\sum_{t+s = r}[\sum_{j\in\lbrace \pm1, \pm2, \ldots, \pm{n\over l}\rbrace}
(-1)^{(p(q)+p(j))(p(j)+p(p))}
T_{q,j}^{(t)}\tilde{T}_{j,p}^{(s)}]
= \delta_{q, p}.
\end{equation}
Applying the anti-homomorphism
$\bar{ev}^{\otimes l}\circ \Delta_l$ to  (\ref{rel1}), we obtain that
\begin{equation}\label{rel2}
\sum_{t+s = r}[\sum_{j\in\lbrace \pm1, \pm2, \ldots, \pm{n\over l}\rbrace}
\bar{ev}^{\otimes l}\circ \Delta_l(\tilde{T}_{j,p}^{(s)})
\bar{ev}^{\otimes l}\circ \Delta_l(T_{q,j}^{(t)})]
= \delta_{q, p}.\nonumber
\end{equation}
Recall that
$\tilde{T}_{j,p}^{(s)} = S({T}_{j,p}^{(s)})$.
Then by (\ref{last}), if $r>0$, then
\begin{equation}\label{rel2}
\sum_{t+s = r}[\sum_{j\in\lbrace \pm1, \pm2, \ldots, \pm{n\over l}\rbrace}
{U}^{\otimes l}\circ \Delta_l^{op}({T}_{j,p}^{(s)})
\bar{ev}^{\otimes l}\circ \Delta_l(T_{q,j}^{(t)})]
= 0.
\end{equation}
Similarly, we have that
\begin{equation}\label{rel3}
\sum_{t+s = r}[\sum_{j\in\lbrace \pm1, \pm2, \ldots, \pm{n\over l}\rbrace}
{U}^{\otimes l}\circ \Delta_l^{op}({T}_{j,p}^{(s)})
\bar{ev}^{\otimes l}\circ \Delta_l(T_{-q,j}^{(t)})]
= 0.
\end{equation}
Adding the equations (\ref{rel2}) and (\ref{rel3}) we obtain
\begin{equation}\label{rel4}
\sum_{t+s = r}[\sum_{j\in\lbrace 1, 2, \ldots, {n\over l}\rbrace}
{U}^{\otimes l}\circ \Delta_l^{op}
\Big({T}_{j,p}^{(s)} + (-1)^{r-s}{T}_{-j,p}^{(s)}\Big)
\bar{ev}^{\otimes l}\circ \Delta_l
\Big({T}_{q,j}^{(t)} + {T}_{-q,j}^{(t)}\Big)]
= 0.
\end{equation}
Equation (\ref{rel4}) is equivalent to  (\ref{rel}).
\end{proof}

\font\red=cmbsy10
\def\~{\hbox{\red\char'0016}}


\vskip 0.1 in
\section*{Acknowledgments}

\vskip 0.1in
\noindent
This work was supported by a grant from the Simons Foundation (\#354874, Elena Poletaeva) and the NSF grant (\#1303301, Vera Serganova).

\vskip 0.1in

\end{document}